\theoremstyle{plain}
\newtheorem{theorem}{Theorem}[section]
\newtheorem*{theorem*}{Theorem}
\newtheorem{lemma}[theorem]{Lemma}
\newtheorem{corollary}[theorem]{Corollary}
\newtheorem{proposition}[theorem]{Proposition}
\newtheorem*{proposition*}{Proposition}
\newtheorem*{conjecture*}{Conjecture}
\theoremstyle{definition}
\newtheorem{definition}[theorem]{Definition}
\theoremstyle{remark}
\newtheorem*{remark*}{Remark}
\DeclareMathOperator{\supp}{supp}
\newcommand{\abs}[1]{\left\lvert #1 \right\rvert}
\newcommand{\norm}[1]{\left\lVert #1 \right\rVert}
\newcommand{\R}{\mathbb{R}}
\newcommand{\C}{\mathbb{C}}
\newcommand{\N}{\mathbb{N}}
\renewcommand{\eqref}[1]{\textnormal{(\ref{#1})}}
\numberwithin{equation}{section}
\title[Stability in corner scattering and inverse acoustic
  scattering]{On corners scattering stably and stable shape
  determination by a single far-field pattern}
\author{Emilia~L.K. Bl{\aa}sten} \address{Department of Mathematics
  and Systems Analysis, Aalto University, FI-00076 Aalto, Finland AND
  Division of Mathematics, Tallinn University of Technology,
  Department of Cybernetics, 19086 Tallinn, Estonia}
\email{emilia.blasten@iki.fi}
\author{Hongyu Liu} \address{Department of Mathematics, City
  University of Hong Kong, 83 Tat Chee Ave, Kowloon, Hong Kong SAR}
\email{hongyuliu@cityu.edu.hk}
\begin{document}

\begin{abstract}

In this paper, we establish two sharp quantitative results for the
direct and inverse time-harmonic acoustic wave scattering. The first
one is concerned with the recovery of the support of an inhomogeneous
medium, independent of its contents, by a single far-field
measurement. For this challenging inverse scattering problem, we
establish a sharp stability estimate of logarithmic type when the
medium support is a polyhedral domain in $\mathbb{R}^n$, $n=2,3$. The
second one is concerned with the stability for corner scattering. More
precisely if an inhomogeneous scatterer, whose support has a corner,
is probed by an incident plane-wave, we show that the energy of the
scattered far-field possesses a positive lower bound depending only on
the geometry of the corner and bounds on the refractive index of the
medium there. This implies the impossibility of approximate
invisibility cloaking by a device containing a corner and made of
isotropic material. Our results sharply quantify the qualitative corner
scattering results in the literature, and the corresponding proofs involve much
more subtle analysis and technical arguments. As a significant
byproduct of this study, we establish a quantitative Rellich's theorem
that continues smallness of the wave field from the far-field up to
the interior of the inhomogeneity. The result is of significant
mathematical interest for its own sake and is surprisingly not yet
known in the literature.

  \medskip 
  \noindent{\bf Keywords} corner scattering, inverse shape problem, invisibility cloaking, stability, single measurement

  \medskip
  \noindent{\bf Mathematics Subject Classification (2010)}: 35Q60, 78A46
  (primary); 35P25, 78A05, 81U40 (secondary).
\end{abstract}




\maketitle

\section{Introduction}\label{sec:Intro}

  In this paper, we are concerned with the direct and inverse problems
  associated with time-harmonic acoustic scattering described by the
  Helmholtz system as follows. Let $k\in\mathbb{R}_+$ be a wavenumber
  of the acoustic wave, signifying the frequency of the wave
  propagation. Let $V\in L^\infty(\mathbb{R}^n)$, $n=2, 3$, be a
  potential function. $V(x)$ signifies the material parameter of the
  medium at the point $x$ and it is related to the refractive index in
  our setting. We assume that $\mathrm{supp}(V)\subset B_R$, where
  $B_R$ is a central ball of radius $R\in\mathbb{R}_+$ in
  $\mathbb{R}^n$. That is, the inhomogeneity of the medium is
  supported inside a given bounded domain of interest. The
  inhomogeneous medium is often referred to as a {\it scatterer}.

  \subsection*{Wave model}
  A common model in probing with waves is to send an incident wave
  field to interrogate the medium $V$. The latter perturbs the former
  to create a total wave field. We let $u^i$ and $u$, respectively,
  denote the incident and total wave fields. The former is an entire
  solution to the Helmholtz equation $(\Delta+k^2)u^i=0$ and $u$
  satisfies
  \begin{equation}\label{eq:Helm1}
    \big(\Delta + k^2(1+V)\big) u = 0,
  \end{equation}
  in $\R^n$. Moreover, the scattered wave $u^s=u-u^i$ satisfies the
  Sommerfeld radiation condition
  \begin{equation}\label{eq:Helm2}
    \abs{x}^{\frac{n-1}{2}} \big(\partial_r - ik\big) u^s \to 0,
  \end{equation}
  uniformly with respect to the angular variable $\theta:=x/|x|$ as
  $r:=\abs{x}\to\infty$. Here, $\partial_r$ is the derivative along
  the radial direction from the origin. The radiation condition
  implies the existence of a far-field pattern. More precisely there
  is a real-analytic function on the unit-sphere at infinity $A_{u^i}:
  \mathbb{S}^{n-1}\to\C$ such that
  \begin{equation}\label{eq:Helm3}
    u(r\theta) = u^i(r\theta) + \frac{e^{ikr}}{r^{(n-1)/2}}
    A_{u^i}(\theta) + \mathcal{O} \Big( \frac{1}{r^{n/2}} \Big)
  \end{equation}
  uniformly along the angular variable $\theta$. This function is
  called the \emph{far-field pattern} or \emph{scattering amplitude}
  of $u$.

  \subsection*{Problem statements}
  The inverse scattering problem that we are concerned with is to
  recover $V$ or its shape, namely the support, from the knowledge of
  $A_{u^i}(\theta)$. A related direct scattering problem of practical
  importance is to investigate under what circumstance one would have
  $A_{u^i}(\theta)\equiv 0$. The former serves as a prototype model to
  many inverse problems arising from scientific and technological
  applications \cite{CK,Isa,Uhl}. The direct scattering problem is
  related to a significant engineering application, \emph{invisibility
    cloaking} (cf. \cite{GKLU4,GKLU5,Uhl2}). We next briefly discuss
  some related progress and open questions in the literature on both
  of these two topics.

  \subsection*{Shape determination}

 Concerning the inverse scattering problem des\-cri\-bed above, we are
 mainly interested in recovering the shape of the inhomogeneous
 scatterer, namely its support. Furthermore, we consider the recovery
 in the formally-determined case with a single far-field measurement,
 that is, the scattering amplitude produced from a single wave
 incidence. The shape determination by minimal or optimal measurement
 data remains a longstanding open problem in inverse scattering theory
 \cite{CK,Isa}. It has been conjectured that one can uniquely
 determine the shape of an impenetrable scatterer by a single
 far-field measurement. Significant progress has been achieved in
 recent years in uniquely recovering impenetrable polyhedral
 scatterers by minimal numbers of far-field measurements; see
 \cite{AR,CY,LZ,J18} for related unique recovery results, and
 \cite{LPRX,Rondi08} for optimal stability estimates. However, very
 little is known in the literature concerning the shape determination
 of a penetrable medium scatterer, independent of its content, by a
 single far-field measurement. Recently, based on the qualitative
 corner scattering result by one of the authors of the current article
 \cite{BPS}, it is show in \cite{HSV} that if two penetrable
 scatterers $V$ and $V'$ produce the same scattering amplitude for any
 single incident wave, namely $A_{u^i}=A'_{u^i}$ then the difference
 of the supports of $V$ and $V'$, namely $\mathrm{supp}(V) \,
 \triangle \, \mathrm{supp}(V') := \big( \mathrm{supp}(V) \setminus
 \mathrm{supp}(V') \big) \cup \big( \mathrm{supp}(V') \setminus
 \mathrm{supp}(V) \big)$, cannot have a corner of the type that
 appeared in the papers on corner scattering that shall be discussed
 in what follows. This means, in particular, that in the set of convex
 polygonal or cuboidal penetrable scatterers the far-field pattern
 produced by sending any single incident wave uniquely determines the
 shape and location of the scatterer.

  In this article, we sharply quantify the aforementioned uniqueness
  result on the shape determination by a single far-field
  pattern. More precisely, we establish logarithmic estimates in
  determining the shape of a medium scatterer supported in a 2D
  polygonal or 3D cuboidal domain. In essence given two such
  penetrable mediums $V$ and $V'$ and a common incident wave $u^i$, if
  the far-field patterns of the scattered waves $u-u^i$ and $u'-u^i$
  are $\varepsilon$-close to one another then the supporting polytopes
  of $V$ and $V'$ are $\varphi(\varepsilon)$-close in the sense of
  Hausdorff distance. Here $\varphi$ is of double-logarithmic
  type. For precise statements see Section~\ref{sect:results}.

  \subsection*{Far-field lower bound and relation to invisibility}
  Concerning the direct scattering problem described earlier, it is
  proved in \cite{BPS} that if $A_{u^i} \equiv 0$ for a single
  incident wave $u^i$ then the support of $V$ cannot have a $90^\circ$
  corner in $\R^n$. In \cite{PSV}, it is further shown that under
  similar conditions, the support of $V$ cannot have a conical
  corner\footnote{With the exception of a discrete set of opening
    angles in 3D under which nothing is known so far.} in $\R^2$ or
  $\R^3$.
  
  The above qualitative results indicate that a penetrable corner
  scatters every incident wave non-trivially. This has significant
  implications for invisibility cloaking, which is a moniker for
  technologies that cause an object, such as a spaceship or an
  individual, to be partially or wholly invisible with respect to
  light or other wave detection. Blueprints for achieving invisibility
  with respect to electromagnetic waves via the use of the
  artificially engineered \emph{metamaterials} were recently proposed
  in \cite{GLU2,Leo,PenSchSmi}. These materials are anisotropic and
  singular. The same idea has also been developed for acoustic waves
  using acoustic metamaterials; see \cite{ctchan} and the references
  cited therein. Due to its practical importance, the mathematical
  study on invisibility cloaking has received significant attentions
  in the last decade; see \cite{GKLU4,GKLU5,J53,J23,J33,Uhl2} and the references
  therein.

  The singularity of the metamaterials for perfect cloaking poses
  sever difficulties to practical realisation. In order to avoid the
  singular structures, various regularised approximate cloaking
  schemes have been proposed. They make use of non-singular
  metamaterials and we refer to the survey paper \cite{LU} and the
  references cited therein. However, these regularised metamaterials
  are still nearly singular in the sense that they depend on an
  asymptotic regularisation parameter and as the regularisation
  parameter tends to zero, the material become singular. It is of
  scientific interest and practical importance to know whether one can
  achieve invisibility by completely regular materials.

  Our results imply not only that cloaking by regular materials is
  impossible, but also so is approximate cloaking, if there is a
  corner on the cloaking device. Indeed, in
  Theorem~\ref{cornerScatStab} we quantify the corner scattering
  results in \cite{BPS,PSV} by showing that for an inhomogeneous
  medium scatterer supported on a polygon/polyhedron, the energy of
  the scattering amplitude possesses a positive lower bound. We prove
  this for regular isotropic acoustic mediums, and similar results are
  in progress for regular anisotropic acoustic mediums as well as
  electromagnetic mediums. We refer to these results as the stability
  issue of corner scattering. Our study indicates that corners not
  only scatter non-trivially but also in a stable way.

  \subsection*{On a significant byproduct}
  The basis of our proofs is on quantifying the estimates and
  coefficients arising in the proofs of \cite{BPS}. However, as can be expected, it involves much more
  subtle analysis and technical arguments due to the delicate
  analytical and geometrical situation. We postpone the discussion of
  our mathematical arguments to Section~\ref{sect:ideas}. In what
  follows, we would like to comment on a significant by product of the
  current study. In order to establish the sharp stability estimates
  mentioned earlier, we need a quantitative version of the unique
  continuation and Rellich's theorem which is surprisingly not yet
  known in the literature. Our context requires that scattered waves
  be small partly inside the penetrable scatterer. A result proving
  this starting from a small far-field pattern has been overlooked in
  the literature. This problem turns out to be highly non-trivial and
  technical and we believe that this result would find important
  application in other challenging scattering problems. In the sequel,
  we briefly discuss the difficulties of the result achieved.
  
  In scattering theory a vanishing far-field pattern implies that the
  scattered wave is zero outside the scattering object \cite{CK}. This
  follows by unique continuation and Rellich's theorem. Instead, we
  require that a small scattering amplitude means a small scattered
  wave, all the way up to the boundary of the support of the
  scatterer. Despite the innocent look of this sentence there is a lot
  of work to do. The impenetrable case is known in the literature
  \cite{Isakov92, Isakov93, Rondi08, RondiSini, LPRX}. Not so for
  penetrable scatterers. There might be two reasons for this lack of
  results: a) waves behave the same outside a penetrable or
  impenetrable scatterer, and b) typically in showing stability in
  inverse medium scattering, the far-field data are reduced to the
  Dirichlet-Neumann map as in \cite{Sylvester--Uhlmann, Nachman88}.
  We cannot use either conditions.

  Orthogonality relations in corner scattering require an estimate for
  the scattered wave that is valid at the boundary of the
  scatterer. Boundary estimates are completely ignored for
  impenetrable obstacles because boundary conditions are imposed
  \emph{a-priori} there. Secondly, the Dirichlet-Neumann map is badly
  suited for our case since we are interested in \emph{a single}
  incident wave and the associated far-field pattern of the scattered
  wave. Restricting to a single incident wave is also the reason why
  inverse backscattering is still unsolved for general potentials (see
  e.g. \cite{RU1,RU2}). One cannot construct special solutions for
  probing the problem in the single wave incidence case.

  We prove a quantitative unique continuation and Rellich's theorem
  for penetrable scatterers in Section~\ref{sect:ff2scat}. There is a
  major issue compared to the impenetrable case: we do not have a
  boundary condition for the total wave at the boundary of the
  scatterer. We cannot use quantitative unique continuation to
  propagate smallness all the way into the boundary of the convex
  hull, as the associated function stops being real-analytic
  there. Dealing with this issue is the source of the two logarithms
  in our stability estimates.

  \subsection*{Layout}
  The structure of the paper is as follows. We define notation in the
  next section, which helps with stating the main theorems in
  Section~\ref{sect:results}. The proof idea is described in
  Section~\ref{sect:ideas}. The quantitative Rellich's theorem and
  propagation of smallness are proven in
  Section~\ref{sect:ff2scat}. The fundamental integral identity, along
  with estimates for its various terms is shown in
  Section~\ref{sect:orthogonality}. The following one,
  Section~\ref{sect:cgo}, has the precise estimates for the complex
  geometrical optics solutions. Finally after all the ingredients have
  been prepared, the main theorems are proven in
  Section~\ref{sect:proofs}. The appendix contains proofs of technical
  geometrical lemmas.

\section{Notation} \label{sect:notation}

\begin{enumerate}
  \item We use italic letters $P, Q, \ldots$ to denote polytopes,
    fraktura symbols $\mathfrak P, \mathfrak Q, \ldots$ for polyhedral
    cones, and calligraphic symbols $\mathcal P, \mathcal Q, \ldots$
    for spherical cones. This is purely a stylistic choice: all
    symbols are defined in their context,
  \item $B_R = B(0,R)$, $0<R<\infty$: a-priori domain of interest,
    where the scatterers are located in,
  \item $P, P' \subset B_R$: the shape of the penetrable scatterers,
    which are open polytopes,
  \item $d_H(P,P')$: the Hausdorff distance between the sets $P$ and
    $P'$, defined by
    \[
    d_H(P,P') = \max\big( \sup_{x\in P} d(x,P'), \, \sup_{x'\in P'}
    d(x',P') \big),
    \]
  \item $\norm{P}_{T(s,r)}$: a type of norm for the characteristic
    function $\chi_P$. If it is finite, the latter is a multiplier in
    the Sobolev space $H^s_r(\R^n)$. See Definition~\ref{triangDef},
  \item $u^i$: incident wave,
  \item $u$, $u'$: corresponding total waves.
\end{enumerate}

\begin{definition}[Well-posed scattering]
  A potential $V \in L^\infty(\R^n)$ is said to give a
  \emph{well-posed scattering problem} if there is a finite $\mathcal
  S$ such that given any incident plane-wave $u^i(x) =
  \exp(ik\omega\cdot x)$ there is a unique $u\in H^2_{loc}$ such that
  \[
  (\Delta + k^2(1+V))u = 0
  \]
  and the scattered wave $u^s = u - u^i$ satisfies the Sommerfeld
  radiation condition. Moreover it has to have the norm bound
  $\norm{u^s}_{H^2(B_{2R})} \leq \mathcal S$.
\end{definition}

\begin{definition}[Admissible shape] \label{admissibleShape}
  A polytope $P \subset B_R$ is \emph{admissible} if
  \begin{enumerate}
  \item in 2D, it is a bounded open convex polygon, and
  \item in 3D, it is a cuboid, i.e. there is a rigid motion taking $P$
    to ${]{0,a}[} \times {]{0,b}[} \times {]{0,c}[}$ for some
    $a,b,c>0$.
  \end{enumerate}
\end{definition}

\begin{definition}[Admissible contrast] \label{admissibleContrast}
  Given an admissible shape $P\subset B_R$, a function
  $\varphi:\R^n\to\C$ is \emph{admissible} if
  \begin{enumerate}
  \item $\varphi\in C^\alpha$ for some $\alpha>0$ in 2D, and
    $\alpha>1/4$ in 3D,
  \item $\varphi\neq0$ at the vertices of $P$.
  \end{enumerate}
\end{definition}

If the wave-number or the potential is small, $k^2 \norm{V}_\infty <
C_0$, then the Neumann series construction of the total wave shows
directly that there is well-posed scattering. Unique continuation and
Fredholm theory generalises this. For details see Section 8.4 in
\cite{CK}. An alternative approach is by \cite{Hormander}, see for
example the introduction in \cite{HSV}. Note that if $P$ and $\varphi$
are admissible, then $V = \chi_P \varphi$ has well-posed scattering at
any positive frequency $k>0$.

\begin{definition}[Non-vanishing total wave] \label{nonVanishing}
  We say that a potential $V \in L^\infty(B_R)$ produces a
  \emph{non-vanishing total wave} if given any incident plane-wave
  $u^i$ the total wave $u$ vanishes nowhere in $B_R \setminus \supp
  V$.
\end{definition}

We again emphasise that this condition is satisfied for $k$ or
$\norm{V}_\infty$ small enough, but more general situations exist. It
is well-known that the vanishing set (nodal set) of the total field
cannot be too large, however how it relates to a particular potential
is an open problem.

\section{Statement of the stability results} \label{sect:results}

We assume the following a-priori bounds on the potentials. Given any
admissible shape $P$ and function $\varphi$ it is possible to choose
these parameters such that $V = \chi_P \varphi$ satisfies these
bounds.
\begin{definition}[A-priori bounds] \label{aprioriBounds}
  The following two theorems have dimension $n\in\{2,3\}$, wavenumber
  $k>0$ and radius of the domain of interest $R>1$ fixed as a-priori
  parameters. In addition
  \begin{enumerate}
  \item the minimal distance from any vertex of $P$ to a non-adjacent
    edge is at least $\ell$ which we assume at most $1$ for technical
    reasons,
  \item in 2D, $P$ has angles at least $2\alpha_m>0$ and at most
    $2\alpha_M<\pi$,
  \item $\norm{P}_{T(s,r)} \leq \mathcal D$, see
    Definition~\ref{triangDef},
  \item $\norm{\varphi}_{C^\alpha} \leq \mathcal M$,
  \item $\abs{\varphi(x_c)} \geq \mu$ for any vertex $x_c$ of $P$,
  \item if $V$ is required to produce non-vanishing total waves, then
    assume that the infimum of the waves' absolute value in $B_R$ is
    at least $c>0$.
  \end{enumerate}
\end{definition}

\begin{theorem} \label{potSupportStab}
  Let $V, V' \in L^\infty(B_R)$ be potentials of the form $V = \chi_P
  \varphi$, $V' = \chi_{P'} \varphi'$ with $P,P'$ and
  $\varphi,\varphi'$ admissible by Definition~\ref{admissibleShape}
  and Definition~\ref{admissibleContrast}. Moreover assume that $V$
  and $V'$ produce non-vanishing total waves as in
  Definition~\ref{nonVanishing}.

  \smallskip
  Let $\mathfrak h = d_H(P,P')$ be the Hausdorff distance of $P$ and
  $P'$.  Let $u^i(x) = \exp(ik\omega\cdot x)$ be any plane-wave and
  $u^s_\infty, u'^s_\infty$ be the far-field patterns of the scattered
  waves produced by $V$ and $V'$, respectively.

  There are constants $\varepsilon_{min}, C<\infty$ --- which depend
  on the a-priori bounds of Definition~\ref{aprioriBounds} only ---
  and $\gamma = \gamma(\alpha,n,r,s)>0$ such that if
  \[
  \norm{u^s_\infty - u'^s_\infty}_{L^2(\mathbb S^{n-1})} < 
  \varepsilon_{min}
  \]
  then
  \begin{equation} \label{distanceBound}
    \mathfrak h \leq C \left( \ln\ln \frac{\mathcal S}{\norm{
        u^s_\infty - u'^s_\infty }_{L^2(\mathbb S^{n-1})}}
    \right)^{-\gamma}.
  \end{equation}
\end{theorem}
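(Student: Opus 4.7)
The plan is a contrapositive argument: to show $\mathfrak{h} := d_H(P,P')$ is small when $\varepsilon := \norm{u^s_\infty - u'^s_\infty}_{L^2(\mathbb{S}^{n-1})}$ is small. First, by the convex polygonal (in 2D) or cuboidal (in 3D) admissibility of $P, P'$ and the fact that $d(\cdot,P')$ is convex on $P$, I would argue that there is a vertex $x_c$ of either $P$ or $P'$, say of $P$, such that $d(x_c, P') = \mathfrak{h}$. Around $x_c$ a truncated polyhedral cone $\mathfrak{K} \subset P \setminus P'$ of diameter comparable to $\min(\mathfrak{h}, \ell)$ fits, whose opening angles are bounded in terms of $\alpha_m, \alpha_M$ (and are right angles in 3D). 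On $\mathfrak{K}$ one has $V - V' = \chi_{\mathfrak{K}} \varphi$ with $|\varphi(x_c)| \geq \mu$ and $\varphi \in C^\alpha$.

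Next, I would apply the fundamental integral identity of Section~\ref{sect:orthogonality}. Testing the equation $(\Delta + k^2)(u^s - u'^s) = -k^2(\chi_P \varphi u - \chi_{P'} \varphi' u')$ against a CGO solution $v$ of $(\Delta + k^2)v = 0$ concentrated at $x_c$, then integrating by parts over $B_R$ and splitting $P \cup P' = \mathfrak{K} \sqcup ((P \cup P')\setminus\mathfrak{K})$, one arrives at a relation of the schematic form
\[
\int_{\mathfrak{K}} \varphi(x)\, u(x)\, v(x)\, dx = I_{\partial}(v, u, u') + I_{\mathrm{far}}(v, u^s - u'^s).
\]
The CGO bounds of Section~\ref{sect:cgo}, combined with $|\varphi(x_c)| \geq \mu$, the Hölder continuity of $\varphi$, and the non-vanishing bound $|u| \geq c$ of Definition~\ref{nonVanishing}, deliver a lower bound of order $c\,\mu\, \mathfrak{h}^{q}\, \tau^{-N}$ on the corner integral, where $\tau$ is the large CGO parameter and $q, N > 0$ depend only on $n, \alpha$ and the cone geometry.

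For the right-hand side, the CGO-remainder part is bounded by $C \tau^{-N-\alpha/2}$, exploiting the $C^\alpha$ regularity of $\varphi$. The far-field difference must be promoted to an interior/boundary $L^\infty$ bound $\norm{u^s - u'^s}_{L^\infty(\Omega)} \leq \Phi(\varepsilon)$ on a neighbourhood $\Omega$ of $P \cup P'$; this is exactly the content of the quantitative Rellich theorem of Section~\ref{sect:ff2scat}, and $\Phi$ is of the order $(\log(\mathcal S / \varepsilon))^{-\gamma_0}$, i.e.\ a single logarithm. The CGO solution $v$ grows on $\Omega$ like $e^{C\tau}$, so the far contribution is at most $C e^{C\tau} \Phi(\varepsilon)$.

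Finally, I would combine and optimise. Inserting the bounds into the identity yields
\[
c\mu\mathfrak{h}^{q}\tau^{-N} \leq C\tau^{-N-\alpha/2} + C e^{C\tau}\Phi(\varepsilon).
\]
Choosing $\tau$ at a suitable power of $\mathfrak{h}^{-1}$ to absorb the first remainder and then solving the second inequality for $\mathfrak{h}$ produces \eqref{distanceBound} with $\gamma = \gamma(\alpha, n, s, r) > 0$ and all constants depending only on the a-priori bounds of Definition~\ref{aprioriBounds}. The main obstacle is precisely the double-logarithmic loss. The single logarithm inside $\Phi$ is unavoidable because $u^s - u'^s$ loses analyticity at the boundary of the convex hull of $P \cup P'$, forcing the use of three-ball/Carleman arguments only up to that boundary followed by a more delicate boundary-type propagation into the scatterer. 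The exponential weight $e^{C\tau}$ is unavoidable because $v$ has to be exponentially concentrated at $x_c$ in order to suppress the contribution of $(P\cup P')\setminus\mathfrak{K}$ relative to the corner integral. Balancing these two effects against the polynomial corner lower bound produces the double-logarithmic modulus in \eqref{distanceBound}; tracking constants through the CGO construction and exploiting the cuboidal restriction in 3D to align three CGO phases with the edges meeting at $x_c$ are the remaining bookkeeping issues.
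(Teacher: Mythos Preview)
Your outline has the right high-level shape (find the distinguished vertex, set up an orthogonality identity, plug in a CGO, optimise), but two of your key technical claims are wrong, and together they misplace the source of the double logarithm.

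\textbf{First gap: the quantitative Rellich step is already double-logarithmic.} You assert that Section~\ref{sect:ff2scat} gives $\Phi(\varepsilon)$ of order $(\ln(\mathcal S/\varepsilon))^{-\gamma_0}$, a single logarithm. It does not. Proposition~\ref{FF2bndry} delivers
\[
\sup_{\partial Q}\big(|u-u'|+|\nabla(u-u')|\big)\leq C\big(\ln\ln(\mathcal S/\varepsilon)\big)^{-1/2},
\]
already a double logarithm. One logarithm is lost going from the far field to the near field and along the chain of balls up to $\partial Q$; the second is lost in the H\"older-continuity step that pushes smallness \emph{across} $\partial Q$ into the scatterer (Proposition~\ref{prop2scatterer}). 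The paper's CGO/optimisation stage then only trades powers of $\tau$ against powers of $h$, converting this double-log smallness of $u-u'$ into a double-log bound on $\mathfrak h$; it does \emph{not} introduce a further logarithm.

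\textbf{Second gap: the CGO is decaying on the integration domain, and there is no global $e^{C\tau}$ weight.} In the paper the test function $u_0=e^{\rho\cdot(x-x_c)}(1+\psi)$ solves the \emph{perturbed} equation $(\Delta+k^2(1+V))u_0=0$, and $\Re\rho$ is chosen so that $e^{\rho\cdot(x-x_c)}$ decays in the spherical cone $\mathcal Q\supset Q$ (Lemma~\ref{lowerBound}). The integration by parts is carried out only on the truncated cone $Q_h=\mathfrak Q\cap B(x_c,h)$ (Propositions~\ref{intByParts} and~\ref{intByPartsEstim}), never on all of $B_R$. On the flat faces of $\partial Q_h$ one has $|e^{\rho\cdot(x-x_c)}|\leq1$, so the boundary term is bounded by a \emph{polynomial} in $\tau$ times $\sup_{\partial Q}|u-u'|$; on the spherical cap the exponential gives $e^{-\delta_0\tau h}$, which is small. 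There is no $e^{C\tau}$ to balance. If you integrate over $B_R$ as you propose, you must control the CGO on the part of $\partial B_R$ opposite the cone, where it is of size $e^{C\tau R}$; combined with the true double-log $\Phi$, your balancing argument would yield a triple-log estimate, not \eqref{distanceBound}.

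A smaller but related point: the non-vanishing hypothesis gives $|u'(x_c)|\geq c$, since $x_c\in B_R\setminus P'$; it says nothing about $|u|$ on $\mathfrak K\subset P=\supp V$. The lower bound on the corner integral in the paper comes from $|u'(x_c)|\,|\varphi(x_c)|$ together with the explicit Laplace-type integral in Lemma~\ref{lowerBound}, not from a pointwise bound on $u$ inside the scatterer.
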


\medskip
We remark that in the following theorem the refractive index function
$\varphi$ is allowed to vanish at the vertices. As long as there is
one corner where it does not vanish, and the scatterer can fit inside
the convex cone generated by that corner, then we can show a lower
bound for the scattering amplitude. {\color{black} We would also like to point out that in Theorem~\ref{potSupportStab}, the scattering potential can actually be required to be H\"older-continuous 
only in an open neighbourhood of its corner, and be $L^\infty$ elsewhere in its support. This can be seen from the corresponding proof of Theorem~\ref{potSupportStab} in what follows. In fact, in the corresponding arguments, the H\"older continuity is only used in a neighbourhood of the corner point. However, in
order to ease the exposition and discussion, we present our study that $\varphi$ is H\"older-continuous in $P$ (resp. $\varphi'$ is H\"older-continuous in $P'$). 

}

\begin{theorem} \label{cornerScatStab}
  Let $V\in L^\infty(B_R)$ be a potential of the form $V = \chi_P
  \varphi$ with $P$ and $\varphi$ admissible by
  Definition~\ref{admissibleShape} and
  Definition~\ref{admissibleContrast}.

  Recall that $\ell$ is a lower bound for the minimal vertex to
  non-adjacent edge distance of $P$. Let $u^i(x) = \exp(ik\omega\cdot
  x)$ be any plane-wave and $u^s_\infty$ be the far-field pattern of
  the scattered wave produced by $V$.

  Then
  \begin{equation}
    \norm{u^s_\infty}_{L^2(\mathbb S^{n-1})} \geq \min\left(
    \frac{\mathcal S}{\exp \exp (C \ell^{-2/\gamma}
      \abs{\varphi(x_c)}^{-2-2/((n+5)\gamma)})}, \, \varepsilon_{min}
    \right).
  \end{equation}
  where the constants $\varepsilon_{min}, C<\infty$ depend only on the
  a-priori parameters of Definition~\ref{aprioriBounds} except for
  $\ell$ or $\mu$, and $\gamma = \gamma(\alpha,n,r,s)>0$ is as in the
  previous theorem.
\end{theorem}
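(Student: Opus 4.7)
The plan is to run the argument by the contrapositive: assuming $\varepsilon := \norm{u^s_\infty}_{L^2(\mathbb{S}^{n-1})} < \varepsilon_{min}$, I will derive a lower bound on $\varepsilon$ of the stated form. Fix a vertex $x_c$ of $P$ where $|\varphi(x_c)| \geq \mu$. Admissibility of $P$ provides an open polyhedral cone $\mathfrak{P}$ with apex $x_c$ that coincides with $P$ inside the ball $B(x_c,\ell)$ and fits strictly inside an open half-space; dually, I can choose a complex direction $\rho = \rho(\tau) \in \C^n$ with $\rho\cdot\rho = k^2$, $|\rho| \asymp \tau$, and $\mathrm{Re}(\rho\cdot y) \leq -c_0 \tau |y|$ on $\mathfrak{P}$, following the construction of Section~\ref{sect:cgo}. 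The CGO test function will be $u_0(x) = e^{\rho\cdot(x-x_c)}$.

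The quantitative Rellich theorem of Section~\ref{sect:ff2scat} gives $\norm{u^s}_{H^2(U)} \leq F(\varepsilon)$ on a suitable $H^2$-neighbourhood $U$ of $x_c$, where $F(\varepsilon)$ decays like a negative power of $\ln\ln(1/\varepsilon)$ with exponent $\gamma$. By Sobolev embedding the pointwise value $|u^s(x_c)|$ is also small, so $|u(x_c)| \geq |u^i(x_c)| - F(\varepsilon) \geq 1/2$ as soon as $\varepsilon$ is below a universal threshold. Green's identity applied to $u$ and $u_0$ on $P$, using $(\Delta+k^2)u_0 = 0$ and the fact that $u^i$ is entire, yields
\begin{equation*}
  -k^2 \int_P \varphi\, u\, u_0\, dx = \int_{\partial P} \bigl( u_0\, \partial_\nu u^s - u^s\, \partial_\nu u_0 \bigr)\, dS.
\end{equation*}
The boundary integral splits into a contribution from the faces adjacent to $x_c$, bounded by $F(\varepsilon)$ times a fixed polynomial in $\tau$ through trace estimates, plus a contribution from faces at distance $\geq \ell$ from $x_c$ for which the CGO decay provides an extra factor $e^{-c_0 \tau \ell}$ multiplying the a-priori $H^2$ bound $\mathcal S$.

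For the left-hand side, I decompose
\begin{equation*}
  \int_P \varphi\, u\, u_0\, dx = \varphi(x_c)\, u(x_c) \int_{\mathfrak{P}} e^{\rho\cdot y}\, dy + R_1 + R_2 + R_3,
\end{equation*}
where $R_1$ absorbs the symmetric difference between $\mathfrak{P}$ and $P - x_c$ (exponentially small in $\tau\ell$), $R_2 = O(\mathcal{M} \tau^{-n-\alpha})$ is the Hölder fluctuation of $\varphi$ on $\mathfrak{P}$, and $R_3 = \int_{\mathfrak{P}} \varphi(x) (u(x) - u(x_c))\, u_0\, dx$ further splits into a plane-wave part of order $\tau^{-n-1}$ and a $u^s$-part controlled again by $F(\varepsilon)$. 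The leading cone integral is evaluated in Section~\ref{sect:cgo} and has modulus $\asymp \tau^{-n}$ with constants depending only on the opening angle.

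Combining both sides yields the master inequality
\begin{equation*}
  |\varphi(x_c)|\, \tau^{-n} \;\lesssim\; \mathcal{M}\, \tau^{-n-\alpha} + \tau^{n+5} e^{-c_0 \tau \ell}\, \mathcal S + \tau^{N}\, F(\varepsilon).
\end{equation*}
First I pick $\tau$ as a large enough power of $|\varphi(x_c)|^{-1/\alpha}$ to dominate the Hölder term; this fixes the $\tau$-dependence on $|\varphi(x_c)|$. The condition that the Rellich term be subdominant then becomes $\ln(1/F(\varepsilon)) \gtrsim \tau\ell$, and inverting $F$ through its double-logarithmic form produces a lower bound on $\varepsilon$ of the double-exponential type claimed, with exponent $2/\gamma$ on $\ell^{-1}$ and $2 + 2/((n+5)\gamma)$ on $|\varphi(x_c)|^{-1}$. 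The main obstacle is exactly this final balancing: the polynomial loss in $\tau$ generated by the trace estimates and the $R_3$ term, in particular the factor $\tau^{n+5}$, must be tracked precisely, since it is what dictates the specific exponent $(n+5)\gamma$; subtler still, the Hölder-exponent threshold $\alpha > 1/4$ in 3D enters here to ensure that the $u^s$-contribution to $R_3$ does not dominate the leading cone term once $\tau$ has been chosen.
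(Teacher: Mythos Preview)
Your overall strategy is a legitimate alternative to the paper's and is in some ways simpler. The paper does \emph{not} use the pure exponential $u_0=e^{\rho\cdot(x-x_c)}$ solving $(\Delta+k^2)u_0=0$. Instead it literally reruns the proof of Theorem~\ref{potSupportStab} with $P'=\emptyset$, $V'\equiv0$, $u'=u^i$, taking $h=\ell$. This means it uses the full CGO $u_0=e^{\rho\cdot(x-x_c)}(1+\psi)$ from Proposition~\ref{CGOsol} which solves the \emph{perturbed} equation $(\Delta+k^2(1+V))u_0=0$, and integrates by parts over the truncated cone $Q_h=\mathfrak P\cap B(x_c,\ell)$ via Proposition~\ref{intByPartsEstim}. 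The left side of the orthogonality identity then carries $Vu_0u^i$ (the incident plane wave), not $Vu_0u$. Your version with the free-Helmholtz $u_0$ over all of $P$ and $Vu_0u$ on the left is a valid and somewhat cleaner identity for this theorem specifically, and indeed lets you avoid $\psi$ altogether. (Minor: you need $\rho\cdot\rho=-k^2$, not $k^2$.)

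Where your sketch breaks down is the balancing step, and this is not cosmetic. First, the $u^s$-part of $R_3$ is \emph{not} controlled by $F(\varepsilon)$; you only have $u\in C^{1,1/2}$ from Lemma~\ref{HolderRegularity}, which gives $|u(x)-u(x_c)|\leq C|x-x_c|$ and hence an $O(\tau^{-n-1})$ contribution with an a-priori constant, not a small one. Second, and more seriously, the condition you write, ``$\ln(1/F(\varepsilon))\gtrsim\tau\ell$'', does not follow from your master inequality: making the Rellich boundary term $\tau^N F(\varepsilon)$ subdominant to $|\varphi(x_c)|\tau^{-n}$ only yields $1/F(\varepsilon)\gtrsim\tau^{N+n}/|\varphi(x_c)|$, which has nothing to do with $\tau\ell$. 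The $\ell$-dependence in your setup enters only through the exponentially small terms $e^{-c_0\tau\ell}$, and fixing $\tau$ as a power of $|\varphi(x_c)|^{-1/\alpha}$ alone does not force those terms to be small unless you also impose a lower bound on $\ell$ in terms of $|\varphi(x_c)|$. To get the stated exponents $\ell^{-2/\gamma}$ and $|\varphi(x_c)|^{-2-2/((n+5)\gamma)}$, the paper converts the exponential terms to polynomials via $e^{-x}\leq C_n x^{-n-4}$, arriving at the clean two-term inequality $c|\varphi(x_c)|\leq\delta(\varepsilon)\tau^{n+5}+\ell^{-n-5}\tau^{-m}$ with $m=\min(1,\alpha,\beta)$, and then optimizes $\tau$ by equating the two terms. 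Your simpler CGO would in fact give \emph{lower} powers of $\tau$ (no $\|\psi\|_{H^2}\sim\tau^2$), so if carried out correctly you would obtain different---likely better---exponents, not the ones you claim. Finally, the threshold $\alpha>1/4$ in 3D has nothing to do with $R_3$: it arises in the CGO construction of Proposition~\ref{CGOsol} to guarantee sufficient decay of $\psi$, and would be entirely absent from your correction-free approach.
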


{\color{black} Similar to our remark earlier, the scattering potential can actually be required to be H\"older-continuous only
in an open neighbourhood of the corner, and be $L^\infty$ elsewhere in its support. This remark has interesting implications for composite materials used for cloaking applications whose
material parameters are usually piecewise constants.

}

\section{Idea of the proofs} \label{sect:ideas}
We start describing the proof of stability for scatterer support
probing. After this it is very convenient to show stability of corner
scattering by having the second scatterer identically zero.
Propagation of smallness is the first step.

Let $w = u-u'$ be the difference of the total (and hence scattered)
waves from two potentials $V=\chi_P\varphi$ and
$V'=\chi_{P'}\varphi'$. Its far-field pattern is the difference of the
far-field patterns of $u$ and $u'$, and hence small when proving
stability. We first propagate that smallness into the near-field by an
Isakov-type estimate. After that we propagate it near the scatterers
by a chain of balls argument and then into the scatterers by a
delicate balancing argument using H\"older continuity.

Local issues are dealt with next. Focus on a vertex $x_c\in\partial P$
which makes $d(x_c,P')$ equal to the Hausdorff distance between $P$
and $P'$. Let $P_h = P \cap B(x_c,h)$ for some $h > 0$ small
enough. We have two representations for the integral
\[
\int_{P_h} V(x) u_0(x) u'(x) dx
\]
where $u_0$ is any (possibly nonphysical) solution to
\begin{equation}\label{nonPhysSol}
  \big(\Delta + k^2(1+V)\big)u_0 = 0
\end{equation}
and $u':\R^n\to\C$ is the total wave satisfying $\big(\Delta +
k^2(1+V')\big)u' = 0$ corresponding to the incident wave $u^i$. Near
$P_h$ it is actually a solution to the constant coefficient equation
\begin{equation}\label{totalWaveAsIncident}
  (\Delta + k^2)u'=0
\end{equation}
because $V'=0$ there.

For the first representation we use \eqref{nonPhysSol} and Green's
formula. The total wave $u$ satisfies
\begin{equation}\label{totalWave}
  \big(\Delta + k^2(1+V)\big)u=0.
\end{equation}
Integration by parts in a truncated cone $Q_h$ slightly larger than
$P_h$ gives
\begin{equation}
  k^2 \int_{P_h} V(x) u_0(x) u'(x) dx = -\int_{\partial Q_h} \big(
  u_0\partial_\nu (u-u') - (u-u') \partial_\nu u_0 \big) d\sigma
\end{equation}
by \eqref{totalWaveAsIncident} and \eqref{totalWave}.

For the second representation the a-priori admissibility assumptions
and the real-analyticity of $u'$ near $P_h$ imply the splittings
\begin{align*}
  V(x) &= \varphi(x_c) + \varphi_\alpha(x), && \abs{\varphi_\alpha(x)}
  \leq \norm{\varphi}_{C^\alpha(C_h)} \abs{x-x_c}^\alpha,\\ u'(x) &=
  u'(x_c) + u'_1(x), && \abs{u'_1(x)} \leq \mathcal R \abs{x-x_c}.
\end{align*}

Lastly, we choose $u_0 : \R^n\to\C$ to be a complex geometrical optics
solution
\[
u_0(x) = e^{\rho\cdot(x-x_c)}\big(1+\psi(x)\big)
\]
with $\rho\in\C^n$ such that $\exp(\rho\cdot x)$ decays exponentially
in $P_h$ as $\abs{\rho}\to\infty$. We show that there are $p\geq1$ and
$\beta>0$ such that
\[
\norm{\psi}_{L^p(\R^n)} \leq C \abs{\Im\rho}^{-n/p-\beta} \norm{V}
\]
where $C$ doesn't depend on $\rho$ or $V$ as long as $\abs{\Im\rho}$
is large enough. However here the norm $\norm{V}$ is of new type and
contains information about the geometry of the polytope $P$ and
a-priori parameters related to $\varphi$.

Plug the above function splittings into $\int V u_0 u' dx$ and then
estimate all of these integrals in terms of the norms of $u-u'$,
$\varphi(x_c)$, $\abs{\Re\rho}$ and $h$. After that a choice of
$\abs{\Re \rho}$ proves an upper bound for $d_H(P,P')$ based on the
smallness $\varepsilon$ of the far-field pattern of $u-u'$.

\section{From the far-field to the scatterer} \label{sect:ff2scat}

The classical Rellich's theorem (Lemma 2.11 in \cite{CK}) says that if
the far-field pattern of a scattered wave is zero, then the scattered
wave is identically zero on the unbounded and connected component of
space that's unperturbed by a potential or source term. In this
section we study what is the corresponding quantitative result: namely
having a penetrable scatterer and a far-field pattern whose norm is
small but positive. This kind of question has been studied earlier for
the easier case of impenetrable scatterers by Isakov \cite{Isakov92},
\cite{Isakov93}, and more recently by for example Rondi \cite{Rondi08}
and Liu, Petrini, Rondi, Xiao \cite{LPRX}.

Our strategy in this section is as follows. We first generalise a far-field
to near-field estimate in the style of Isakov \cite{Isakov93} and
Rondi, Sini \cite{RondiSini} to the penetrable scatterer case. Then we
use an $L^\infty$ three-spheres inequality to propagate smallness from
the boundary of $B_{2R}$ to almost the support of the scatterer
$V$. To proceed after that use the H\"older continuity of $w =
u-u'$. This allows the propagation to take the final step, crossing
from outside the support of the potentials into the support. Lastly,
we use an elliptic regularity estimate to see that the same operations
can be done for $w=\nabla(u-u')$.

\subsection*{From the far-field to the near-field}

Here we show that if the far-field patterns $A_{u^i}$, $A'_{u^i}$ of
$u$ and $u'$ are close, then $u$ and $u'$ are close in
$B_{2R}\setminus B_R$.

\begin{lemma}\label{logOptimize}
  Let $A, \varepsilon, \mathscr S > 0$. Then there is a function
  $\ell:\R_+\to\R_+$ such that for
  \[
  f(\varepsilon,\ell) = \left(\frac{\ell}{A}\right)^\ell \varepsilon^2
  + \mathscr{S}^2
  \]
  we have
  \[
  f(\varepsilon, \ell(\varepsilon)) \leq 2 \max(\mathscr{S}^2,
  \varepsilon^2).
  \]
  Moreover, when $\varepsilon < \mathscr S$ we may set
  $\ell(\varepsilon) = \sqrt{2A\ln\frac{\mathscr S}{\varepsilon}}$.
\end{lemma}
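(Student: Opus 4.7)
The plan is to choose $\ell(\varepsilon)$ piecewise, splitting by which of $\mathscr{S}^2$ or $\varepsilon^2$ dominates the right-hand side. In the easy regime $\varepsilon \geq \mathscr{S}$ I would fix $\ell(\varepsilon)$ to be a small positive constant, say $\min(1,A)$, so that $\ell/A \leq 1$ and consequently $(\ell/A)^\ell \leq 1$. This immediately gives
\[
f(\varepsilon,\ell(\varepsilon)) \leq \varepsilon^2 + \mathscr{S}^2 \leq 2\varepsilon^2 = 2\max(\mathscr{S}^2,\varepsilon^2).
\]

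The substantive case is $\varepsilon < \mathscr{S}$, where the target becomes $2\mathscr{S}^2$ and it suffices to establish $(\ell/A)^\ell \varepsilon^2 \leq \mathscr{S}^2$, i.e.\
\[
\ell \ln(\ell/A) \leq 2\ln(\mathscr{S}/\varepsilon).
\]
Setting $L := \ln(\mathscr{S}/\varepsilon) > 0$ and plugging in the prescribed value $\ell(\varepsilon) = \sqrt{2AL}$, the left-hand side becomes $\tfrac{1}{2}\sqrt{2AL}\,\ln(2L/A)$. A further change of variable $y := 2L/A$ reduces the claim to the elementary inequality $\ln y \leq 2\sqrt{y}$ for all $y > 0$, which I would verify by inspecting $g(y) := 2\sqrt{y} - \ln y$: the unique critical point is $y = 1$, at which $g(1) = 2$, and $g \to +\infty$ at both $0^+$ and $+\infty$, so $g \geq 2 > 0$ on $(0,\infty)$.

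There is no real obstacle here; the lemma is a purely elementary optimisation designed to calibrate the exponential factor $(\ell/A)^\ell$ against the smallness $\varepsilon^2$. The square-root-log shape $\ell \sim \sqrt{\ln(1/\varepsilon)}$ is precisely the threshold at which $(\ell/A)^\ell$ grows no faster than $\varepsilon^{-2}$, and it is this choice that ultimately produces the double-logarithmic rate appearing in Theorem~\ref{potSupportStab}.
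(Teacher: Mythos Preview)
Your proof is correct and follows essentially the same approach as the paper. The only cosmetic differences are that the paper picks $\ell=A$ (rather than $\min(1,A)$) in the easy regime, and in the substantive regime the paper writes the key inequality as $\tfrac{\ell}{A}\ln\tfrac{\ell}{A}\leq(\tfrac{\ell}{A})^2$ (i.e.\ $\ln x\leq x$), which is exactly your $\ln y\leq 2\sqrt{y}$ under the substitution $y=x^2$.
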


\begin{proof}
  If $\varepsilon \geq \mathscr S$ choose $\ell(\varepsilon) = A$.
  Otherwise $\ln(\mathscr S/\varepsilon) > 0$ and we may set $\ell$ as
  in the statement, which implies that
  \[
  \frac{\ell}{A} \ln \frac{\ell}{A} \leq \left( \frac{\ell}{A}
  \right)^2 = \frac{2}{A} \ln \frac{\mathscr S}{\varepsilon}
  \]
  i.e. $(\ell/A)^\ell \leq \mathscr{S}^2/\varepsilon^2$ from which the
  claim follows.
\end{proof}

The following proposition generalises Theorem 4.1 from Rondi and Sini
\cite{RondiSini} to the penetrable scatterer case.

\begin{proposition}\label{FF2NF}
  Assume that $w^s \in H^2_{loc}(\R^n)$ satisfies $(\Delta +
  k^2)w^s=0$ in $\R^n \setminus \overline B(0,R)$ and the Sommerfeld
  radiation condition. Let $B_0>1$, $\mathscr S \geq 0$ and assume the
  a-priori bound $\norm{w^s}_{L^2(B_{2R}\setminus B_R)} \leq \mathscr
  S$.

  Let $\varepsilon = \norm{w^s_\infty}_{L^2(\mathbb S^{n-1})}$ where
  $w^s_\infty$ is the far-field pattern of $w^s$. Then there is a
  constant $\mathscr C>0$ depending only on $k,R,B_0$ such that if
  $\varepsilon < \mathscr C^{-1} \mathscr S$ then
  \[
  \norm{w^s}_{L^2(B_{2B_0R}\setminus B_{B_0R})} \leq \mathscr C
  \mathscr S B_0^{-\frac{1}{2} \sqrt{2ekR \ln (\mathscr
      S/\varepsilon)}}.
  \]
  However if not, then $\norm{w^s}_{L^2(B_{2R}\setminus B_R)} \leq
  \mathscr C \varepsilon$.
\end{proposition}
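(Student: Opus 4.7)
The plan is to reduce the statement to a Parseval-type computation on spheres centered at the origin, using the classical multipole expansion of $w^s$ outside $B_R$, and then to balance low- and high-order contributions via Lemma~\ref{logOptimize}.

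First, since $w^s$ is a radiating solution of $(\Delta+k^2)w^s=0$ in $\R^n\setminus\overline{B_R}$, I would invoke its convergent series expansion
\[
w^s(r\theta) = \sum_\alpha a_\alpha H_\alpha(kr)\,Y_\alpha(\theta),\qquad r>R,
\]
where $\{Y_\alpha\}$ is an orthonormal basis of spherical harmonics on $\mathbb S^{n-1}$ and $H_\alpha$ denotes the Hankel function of the first kind of the corresponding order (cylindrical in 2D, spherical in 3D). Orthogonality on each sphere gives
\[
\norm{w^s(r\cdot)}_{L^2(\mathbb S^{n-1})}^2 = \sum_\alpha \abs{a_\alpha}^2 \abs{H_\alpha(kr)}^2.
\]
The leading-order asymptotic of $H_\alpha$ as $r\to\infty$ identifies a constant multiple of $a_\alpha$ as the $\alpha$-th spherical-harmonic coefficient of $w^s_\infty$, hence $\sum_\alpha \abs{a_\alpha}^2 \leq C\varepsilon^2$ with $C=C(k,n)$. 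The a-priori bound yields analogously $\sum_\alpha \abs{a_\alpha}^2 \int_R^{2R} \abs{H_\alpha(kr)}^2 r^{n-1}\,dr \leq C'\mathscr S^2$.

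Next I split the series at a cutoff $L$, to be chosen, and apply two classical large-order estimates for Hankel functions. Using Stirling's formula on the leading term of the large-order asymptotics,
\[
\abs{H_\alpha(kr)} \lesssim \abs{\alpha}^{-1/2} \Bigl(\frac{2\abs{\alpha}}{ekr}\Bigr)^{\abs{\alpha}} \qquad\text{for } \abs{\alpha}\geq kr,
\]
and for $r<r'$ the radial-decay comparison
\[
\frac{\abs{H_\alpha(kr')}^2}{\abs{H_\alpha(kr)}^2} \lesssim \Bigl(\frac{r}{r'}\Bigr)^{2\abs{\alpha}}.
\]
Applied with $r'\in[B_0R,2B_0R]$ and $r=R$, the low modes ($\abs{\alpha}\leq L$) contribute at most $C_1 \varepsilon^2 (L/A)^{2L}$ with $A=ekB_0R/2$, while the high modes ($\abs{\alpha}>L$) contribute at most $C_2 \mathscr S^2 B_0^{-2L}/R$. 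Integrating in $r'$ produces
\[
\norm{w^s}_{L^2(B_{2B_0R}\setminus B_{B_0R})}^2 \leq C_3 \varepsilon^2 (L/A)^{2L} + C_4 \mathscr S^2 B_0^{-2L}.
\]

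Finally I factor out $B_0^{-2L}$ to rewrite the right-hand side as $B_0^{-2L}\bigl[(L/A')^{2L}\varepsilon^2 + \mathscr S^2\bigr]$ with $A'=A/B_0=ekR/2$, and apply Lemma~\ref{logOptimize} to the bracket under the identification $\ell_{\mathrm{lem}}=2L$, $A_{\mathrm{lem}}=2A'=ekR$. In the regime $\varepsilon<\mathscr S$ this selects $L=\tfrac12\sqrt{2ekR\ln(\mathscr S/\varepsilon)}$ and makes the bracket $\leq 2\mathscr S^2$, which, after taking square roots, gives the stated exponent $B_0^{-\frac12\sqrt{2ekR\ln(\mathscr S/\varepsilon)}}$; the complementary regime $\varepsilon\geq\mathscr C^{-1}\mathscr S$ follows at once from the a-priori bound. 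The main obstacle is the sharp bookkeeping of the Hankel-function estimates at large order: one must extract a uniform-in-$\alpha$ lower bound for the coefficient relating $a_\alpha$ to the far-field, and use the radial-decay ratio with the explicit constant $1$ in front of $(r/r')^{2\abs{\alpha}}$, so that the constant $2ekR$ appears inside the square root rather than a weaker one, since this exponent feeds directly into the double logarithm of Theorem~\ref{potSupportStab}.
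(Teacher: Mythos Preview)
Your proposal follows the paper's approach exactly: spherical-harmonic expansion on spheres, split at a cutoff $L$, bound low modes by $\varepsilon$ together with the large-order Hankel growth, bound high modes by the a-priori $\mathscr S$ together with the Hankel decay ratio, then optimize $L$ via Lemma~\ref{logOptimize}.

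One correction is needed in the high-mode step. Comparing at a \emph{fixed} inner radius $r=R$ does not close: since $\lvert H^{(1)}_\nu(z)\rvert$ is decreasing in $z$, the point value $\sum_\alpha\lvert a_\alpha\rvert^2\lvert H_\alpha(kR)\rvert^2$ is not controlled by the integral $\int_R^{2R}\sum_\alpha\lvert a_\alpha\rvert^2\lvert H_\alpha(kr)\rvert^2 r^{n-1}\,dr\leq C\mathscr S^2$; the two-sided estimate \eqref{HankelEst2} shows the discrepancy picks up a factor comparable to $2^{2\lvert\alpha\rvert}$, which is unbounded on the high modes. The paper fixes this by comparing $\lvert H_\nu(kr')\rvert$ with $\lvert H_\nu(kr'/B_0)\rvert$ \emph{pointwise in} $r'$, so that after integrating in $r'$ over $[B_0R,2B_0R]$ the change of variables $r=r'/B_0$ reproduces exactly $\norm{w^s}_{L^2(B_{2R}\setminus B_R)}^2$ on the right-hand side. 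With that adjustment your argument is the paper's; the paper also carries out carefully the case analysis showing that the low-mode bound $\lvert H^{(1)}_\nu(z)\rvert^2\leq C(2\nu_0/(ez))^{2\nu_0-1}/z$ holds uniformly for all $0\leq\nu\leq\nu_0$ (not only near $\nu_0$), which you should spell out. Your final concern about hitting the precise constant $2ekR$ inside the square root is unnecessary: any positive $c(k,R)$ there yields the same $\ln\ln$ rate in Theorem~\ref{potSupportStab}, the difference being absorbed into $\mathscr C$.
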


\begin{proof}
  By the assumptions on $w^s$ it is well known that there is a
  sequence $b_j > 0$, $j=0,1,\ldots$ such that its far-field pattern
  $w^s_\infty$ satisfies
  \[
  \norm{w^s_\infty}^2_{L^2(\mathbb S^{n-1})} = \sum_{j=0}^\infty b_j^2
  \]
  and the function itself has
  \[
  \norm{w^s}_{L^2(S(0,r))}^2 = \frac{\pi}{2} \sum_{j=0}^\infty b_j^2 k
  r \abs{ H_{j+(n-2)/2}^{(1)}(kr)}^2
  \]
  for any $r > R$. Here $H^{(1)}_\nu$ is a Hankel function of first
  kind and order $\nu$.

  Let $j_0 \in \{0,1,2,\ldots\}$ and $B_0 > 1$. Then
  \begin{align}
  &\norm{w^s}_{L^2(S(0,r))}^2 = \frac{\pi}{2} \sum_{j=0}^{j_0} b_j^2 k
    r \abs{H^{(1)}_{j+(n-2)/2} (kr)}^2 \notag\\ &\qquad \quad +
    \frac{\pi}{2} \sum_{j=j_0+1}^\infty b_j^2 k r
    \frac{\abs{H^{(1)}_{j+(n-2)/2}(kr)}^2}{\abs{H^{(1)}_{j+(n-2)/2}(kr/B_0)}^2}
    \abs{H^{(1)}_{j+(n-2)/2}(kr/B_0)}^2 \notag\\ &\qquad \leq
    \frac{\pi}{2} k r \max_{0\leq j \leq j_0}
    \abs{H^{(1)}_{j+(n-2)/2}(kr)}^2 \norm{w^s_\infty}_{L^2(\mathbb
      S^{n-1})}^2 \notag\\ &\qquad \quad + B_0 \sup_{j>j_0}
    \frac{\abs{H^{(1)}_{j+(n-2)/2}(kr)}^2}{\abs{H^{(1)}_{j+(n-2)/2}(kr/B_0)}^2}
    \norm{w^s}_{L^2(S(0,r/B_0))}^2 \label{sphericalHarmonics}
  \end{align}
  by the two formulas above. By Corollary 3.8 from Rondi and Sini
  \cite{RondiSini} we see that if $0<z_1 \leq z_2 < \infty$ then there
  is $C = C(z_1,z_2) < \infty$ such that
  \begin{equation}\label{HankelEst1}
    \abs{H^{(1)}_0(z)}^2 \leq C^2 \leq C^2 \frac{4}{\pi e z}
  \end{equation}
  and
  \begin{equation}\label{HankelEst2}
    C^{-2} \frac{4}{\pi e z} \left( \frac{2\nu}{ez} \right)^{2\nu-1}
    \leq \abs{H^{(1)}_\nu(z)}^2 \leq C^2 \frac{4}{\pi e z} \left(
    \frac{2\nu}{ez} \right)^{2\nu-1}
  \end{equation}
  for $z_1 \leq z \leq z_2$ and $\nu \in \{\frac{1}{2}, \frac{2}{2},
  \frac{3}{2}, \ldots\}$. We will integrate the formula above for
  $\norm{w^s}_{L^2(S(0,r))}^2$ along the segment $r\in
       {[{B_0R,2B_0R}]}$, and so the minimal value of $kr/B_0$ will be
       $z_1 := k R > 0$, and the maximal value of the larger $kr$
       shall be $z_2 := 2B_0kR < \infty$.

  Write $\nu_0 = j_0 +(n-2)/2$ and assume that $j_0$ is large enough
  that $\nu_0 \geq ez_2/2 = eB_0kR$ and $\nu_0>1$. These assumptions
  imply that $2\nu_0 \geq ez$ when $z_1\leq z\leq z_2$, and thus also
  \[
  \abs{H_0^{(1)}(z)}^2 \leq \frac{4 C^2}{\pi e z} \leq \frac{4
    C^2}{\pi e z} \left( \frac{2\nu_0}{ez} \right)^{2\nu_0-1}, \quad
  z_1 \leq z \leq z_2.
  \]
  Next, if $1/2 \leq \nu \leq ez/2$ and it is a half-integer, we have
  \[
  \abs{H_\nu^{(1)}(z)}^2 \leq \frac{4 C^2}{\pi e z} \left(
  \frac{2\nu}{ez} \right)^{2\nu-1} \leq \frac{4 C^2}{\pi e z} \leq
  \frac{4 C^2}{\pi e z} \left( \frac{2\nu_0}{e z} \right)^{2\nu_0-1},
  \quad z_1 \leq z \leq z_2.
  \]
  On the other hand if $ez/2 \leq \nu \leq \nu_0$ then
  \[
  \abs{H_\nu^{(1)}(z)}^2 \leq \frac{4 C^2}{\pi e z} \left(
  \frac{2\nu}{ez} \right)^{2\nu-1} \leq \frac{4 C^2}{\pi e z} \left(
  \frac{2\nu_0}{ez} \right)^{2\nu_0-1}, \quad z_1 \leq z \leq z_2
  \]
  because the function $\nu \mapsto (2\nu/(ez))^{2\nu-1}$ defined on
  $\R_+$ is increasing when $\ln 2\nu - \ln z - (2\nu)^{-1} \geq
  0$. This is true when $2\nu \geq e z$ and $\nu \geq 1/2$. In
  conclusion, we can estimate
  \[
  \abs{H_{j+(n-2)/2}^{(1)}(kr)}^2 \leq \frac{4C^2}{\pi ekr} \left(
  \frac{2\nu_0}{ekr} \right)^{2\nu_0-1}, \quad B_0R \leq r \leq 2B_0R
  \]
  in \eqref{sphericalHarmonics} when $0\leq j \leq j_0$. Then, using
  the two Hankel function estimates \eqref{HankelEst1} and
  \eqref{HankelEst2} and recalling that $B_0^{1-2\nu} \leq
  B_0^{1-2\nu_0}$ when $\nu\geq\nu_0$ and $B_0\geq1$, we can continue
  estimating \eqref{sphericalHarmonics} with
  \begin{equation} \label{afterHankel}
    \norm{w^s}_{L^2(S(0,r))}^2 \leq \frac{2 C^2}{e} \left(
    \frac{2\nu_0}{e k r} \right)^{2\nu_0-1}
    \norm{w^s_\infty}_{L^2(\mathbb S^{n-1})}^2 + C^4 B_0^{1-2\nu_0}
    \norm{w^s}_{L^2(S(0,r/B_0))}^2
  \end{equation}
  whenever $B_0R \leq r \leq 2B_0R$.

  \medskip
  Next, we integrate \eqref{afterHankel} by $\int_{B_0R}^{2B_0R}
  \ldots dr$ to get
  \begin{align*}
    &\norm{w^s}_{L^2(B_{2B_0R}\setminus B_{B_0R})}^2 \\ &\qquad \leq
    \frac{C^2 R B_0}{e} \frac{1}{\nu_0-1} \left( 1 -
    \frac{1}{2^{2\nu_0-2}} \right) \left( \frac{2\nu_0}{ekRB_0}
    \right)^{2\nu_0-1} \norm{w^s_\infty}_{L^2(\mathbb S^{n-1})}^2
    \\ &\qquad\quad + C^4 B_0^{2-2\nu_0} \norm{w^s}_{L^2(B_{2R}
      \setminus B_R)}^2
  \end{align*}
  where we have denoted $0$-centred discs of radius $\ell$ by
  $B_\ell$. Use the shorthand $\varepsilon =
  \norm{w^s_\infty}_{L^2(\mathbb S^{n-1})}$ and recall from the
  proposition statement that $\mathscr S \geq \norm{w^s}_{L^2(B_{2R}
    \setminus B_R)}$. Since $\nu_0 \in \frac12\N$ and $\nu_0>1$ we
  have $\lvert(1-e^{2-2\nu_0})/(\nu_0-1)\rvert \leq 2$. Thus
  \[
  \norm{w^s}_{L^2(B_{2B_0R} \setminus B_{B_0R})}^2 \leq \max\left(
  \frac{2C^2R}{e}, C^4 \right) \frac{1}{B_0^{2\nu_0-2}} \left(
  \left(\frac{2\nu_0}{ekR}\right)^{2\nu_0-1} \varepsilon^2 + \mathscr
  S^2\right)
  \]
  when $\nu_0\in\frac12\N$ with $\nu_0>1$ and $\nu_0 \geq eB_0kR$.

  \medskip We are now ready to fix $\nu_0$. Let
  \begin{equation} \label{ellDef}
    \ell = \sqrt{2ekR\ln (\mathscr S/\varepsilon)}, \qquad \nu_0 =
    \lfloor \ell \rfloor/2.
  \end{equation}
  If $\nu_0 < \max(3/2, eB_0kR)$ then
  \[
  \max(3, 2eB_0kR) > 2\nu_0 = \lfloor \ell \rfloor > \ell - 1
  \]
  and so
  \[
  \frac{\mathscr S}{\varepsilon} = \exp\left( \frac{\ell^2}{2ekR}
  \right) < \exp \left( \frac{ (1+\max(3,2eB_0kR))^2 }{2ekR} \right)
  \]
  which implies $\norm{w^s}_{L^2(B_{2R}\setminus B_R)} \leq \mathscr C
  \varepsilon$. On the other hand this would follow even more directly
  if $\mathscr S < \varepsilon$. The other case, namely $\nu_0 \geq
  \max(3/2, eB_0kR)$ and $\mathscr S \geq \varepsilon$, implies in
  particular that
  \[
  \left( \frac{2\nu_0}{ekR} \right)^{2\nu_0-1} \leq \left(
  \frac{\ell}{ekR} \right)^{2\nu_0-1} \leq \left( \frac{\ell}{ekR}
  \right)^\ell
  \]
  because $\ell \geq \lfloor\ell\rfloor = 2\nu_0$ and $2\nu_0 \geq
  2eB_0kR \geq ekR$, as well as
  $2\nu_0-1\geq0$. Lemma~\ref{logOptimize} implies
  \[
  \norm{w^s}_{L^2(B_{2B_0R\setminus B_0R})}^2 \leq \max
  \left(\frac{2C^2R}{e}, C^4\right)
  \frac{2\mathscr{S}^2}{B_0^{2\nu_0-2}} \leq \max
  \left(\frac{2C^2R}{e}, C^4\right) \frac{2\mathscr S^2}{B_0^{\ell-3}}
  \]
  because $2\nu_0 = \lfloor\ell\rfloor \geq \ell-1$ and $\mathscr S
  \geq \varepsilon$ in this final case. The final claim follows from
  the choice of $\ell$ in \eqref{ellDef}.
\end{proof}

\begin{corollary}\label{usedFF2NF}
  Let $w^s \in H^2_{loc}(\R^n)$ satisfy $(\Delta+k^2)w^s=0$ in
  $\R^n\setminus \overline B(0,R)$ and the Sommerfeld radiation
  condition at infinity. Let $w^s_\infty$ be its far-field pattern.
  
  Let $\mathscr S \geq 0$ and assume the a-priori bound
  $\norm{w^s}_{L^2(B_{2R}\setminus B_R)} \leq \mathscr S$. Denote
  $\varepsilon = \norm{w^s_\infty}_{L^2(\mathbb{S}^{n-1})}$. Let $A$
  be a domain such that $\overline A \subset B_{2R} \setminus
  \overline B_R$. Then, for any smoothness index $r\in\N$, there are
  constants $c,C>0$ depending only on $k,r,R,A$ such that
  \[
  \norm{w^s}_{H^r(A)} \leq C \max \left(\varepsilon, \mathscr S e^{-c
    \sqrt{\ln (\mathscr S /\varepsilon)}} \right).
  \]
\end{corollary}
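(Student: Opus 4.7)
The plan is to chain Proposition~\ref{FF2NF} with a quantitative unique continuation step and interior elliptic regularity. Fix some $B_0>1$. In the regime $\varepsilon \geq \mathscr C^{-1}\mathscr S$ of Proposition~\ref{FF2NF} we already have $\norm{w^s}_{L^2(B_{2R}\setminus B_R)} \leq \mathscr C\,\varepsilon$, and interior regularity for $(\Delta+k^2)w^s=0$ on any intermediate domain $A\Subset A' \Subset B_{2R}\setminus\overline B_R$ upgrades this to $\norm{w^s}_{H^r(A)} \leq C\varepsilon$, matching the first entry of the maximum. I therefore focus on the complementary regime $\varepsilon < \mathscr C^{-1}\mathscr S$, in which the proposition yields
\[
\eta_0 := \norm{w^s}_{L^2(B_{2B_0R}\setminus B_{B_0R})} \leq C\, \mathscr S\, e^{-c_0\sqrt{\ln(\mathscr S/\varepsilon)}}
\]
for $c_0 = \tfrac12 (\ln B_0)\sqrt{2ekR}$, after rewriting $B_0^{-\alpha}=e^{-\alpha\ln B_0}$.

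The key step propagates this smallness back from the far annulus to a fixed enlargement $A'$ of $A$ (chosen with $\overline{A'}\subset B_{2R}\setminus\overline B_R$) by a chain-of-balls three-spheres argument for the Helmholtz operator on the connected open set $\Omega := \R^n\setminus\overline B_R$, where $w^s$ is real-analytic. Choose a smooth arc $\gamma \subset \Omega$ connecting a point of $B_{2B_0R}\setminus B_{B_0R}$ to a point of $A'$, a radius $r>0$ depending only on $k,R,A$ small enough that every triple $B(x,r/2)\subset B(x,r)\subset B(x,2r)$ with $x\in\gamma$ lies in $\Omega$, and a finite cover $\{B(x_j,r)\}_{j=0}^N$ of $\gamma$ with $\abs{x_{j+1}-x_j}\leq r/2$. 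The standard $L^2$ three-spheres inequality for $\Delta+k^2$ then gives
\[
\norm{w^s}_{L^2(B(x_{j+1},r))} \leq C\, \norm{w^s}_{L^2(B(x_{j+1},r/2))}^{\theta}\, \norm{w^s}_{L^2(B(x_{j+1},2r))}^{1-\theta}
\]
for some fixed $\theta\in(0,1)$. Using the inclusion $B(x_{j+1},r/2)\subset B(x_j,r)$ to transfer smallness at each step, and a uniform bound $\norm{w^s}_{L^2(B(x_j,2r))}\leq C\mathscr S$ for the large-ball factors, iterating $N$ times yields
\[
\norm{w^s}_{L^2(A')} \leq C\, \eta_0^{\theta^N}\, \mathscr S^{1-\theta^N} \leq C\, \mathscr S\, e^{-c\sqrt{\ln(\mathscr S/\varepsilon)}},
\]
with $c = c_0\theta^N$. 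Interior elliptic regularity for $A\Subset A'$ then upgrades this $L^2$ estimate to the required $H^r$ bound, and combining the two regimes into a single maximum produces the statement.

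The main obstacle is securing the uniform upper bound $\norm{w^s}_{L^2(B(x_j,2r))}\leq C\mathscr S$ on the large-ball factors: our only immediate a-priori bound sits on the annulus $B_{2R}\setminus B_R$, whereas the chain $\{B(x_j,2r)\}$ sweeps through the larger bounded set $B_{2B_0R}\setminus\overline{B_{R+\delta}}$ for some $\delta>0$. One way to handle this is to apply Proposition~\ref{FF2NF} one additional time at a slightly different scaling factor to control $\norm{w^s}_{L^2}$ on all of $B_{2B_0R}\setminus B_R$ by $\mathscr S + \eta_0 \leq 2\mathscr S$; once this uniform control is in place the geometric $\theta^N$-contraction of the three-spheres exponent combined with the $\sqrt{\ln(\mathscr S/\varepsilon)}$ rate already present in $\eta_0$ produces exactly the decay rate stated in the corollary.
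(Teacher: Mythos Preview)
Your argument can be made to work, but it is considerably more elaborate than needed, and the ``main obstacle'' you identify is self-inflicted. The paper's proof avoids the three-spheres propagation step entirely by exploiting the freedom in the parameter $B_0$ of Proposition~\ref{FF2NF}. Since $\overline A \subset B_{2R}\setminus\overline B_R$, one may choose $B_0>1$ close enough to $1$ that $\overline A \subset \Omega := B_{2R}\setminus\overline B_{B_0R}$; then automatically $\Omega \subset B_{2B_0R}\setminus B_{B_0R}$ (because $B_0>1$), and the proposition \emph{directly} bounds
\[
\norm{w^s}_{L^2(\Omega)} \leq \mathscr C\max\Bigl(\varepsilon,\ \mathscr S\,B_0^{-\frac12\sqrt{2ekR\ln(\mathscr S/\varepsilon)}}\Bigr).
\]
No chain of balls, no uniform large-ball control, no auxiliary application of the proposition. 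The remainder is pure interior elliptic regularity: from $(\Delta+k^2)w^s=0$ in $\Omega$ one gets $\norm{w^s}_{H^{s+2}(\Omega')}\leq C_{k,\varphi}\norm{w^s}_{H^{s+1}(\Omega)}$ for any $\Omega'\Subset\Omega$ via a cutoff $\varphi$, and iterating through a finite nested chain $A=\Omega_r\Subset\cdots\Subset\Omega_0=\Omega$ upgrades the $L^2$ bound to $H^r$.

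You implicitly treated the output annulus $B_{2B_0R}\setminus B_{B_0R}$ as lying \emph{far} from $A$, which is what forced the propagation step and the attendant difficulty with uniform upper bounds on the large balls. Recognising that $B_0$ may be taken just above $1$ so that the output annulus already contains $A$ collapses the whole argument. Your route would eventually close (the fix you sketch for the uniform bound does work if $B_0<2$, and the single-endpoint chain can be extended to cover all of $A'$), but it introduces a $\theta^N$ loss in the exponent and several pages of geometry that the paper simply does not need here.
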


\begin{proof}
  Elliptic interior regularity is the main tool to prove the
  claim. Firstly, if $f \in \mathscr{S}'(\R^n)$ then
  \[
  \norm{f}_{H^{s+2}(\R^n)} = \norm{ (1+k^2) f - (\Delta+k^2)
    f}_{H^s(\R^n)}
  \]
  for any $s\in\R$. Let $B_0 > 1$ be such that $\overline A \subset
  \Omega := B_{2R} \setminus \overline B_{B_0R}$. If $\varphi \in
  C^\infty_0(\Omega)$ we have
  \[
  \norm{(\Delta+k^2)(\varphi w^s)}_{H^s(\R^n)} = \norm{2\nabla\varphi
    \cdot \nabla w^s + w^s \Delta \varphi}_{H^s(\R^n)} \leq C_\varphi
  \norm{w^s}_{H^{s+1}(\Omega)}.
  \]
  Here $w^s$ was extended by zero outside of $\Omega$. Let $\Omega'
  \subset \Omega$ be a subdomain a positive distance from the boundary
  of $\Omega$. Now, if we have $\varphi \equiv 1$ on $\Omega'$, then
  \begin{align*}
    &\norm{w^s}_{H^{s+2}(\Omega')} \leq \norm{\varphi
      w^s}_{H^{s+2}(\R^n)} \leq (1+k^2)C_\varphi
    \norm{w^s}_{H^s(\Omega)} + C_\varphi \norm{w^s}_{H^{s+1}(\Omega)}
    \\ &\qquad\leq C_{k,\varphi} \norm{w^s}_{H^{s+1}(\Omega)}.
  \end{align*}
  by the two equations above.

  Next, the proposition implies 
  \[
  \norm{w^s}_{L^2(\Omega)} \leq \mathscr C \max \left(\varepsilon,
  \mathscr S B_0^{-\frac{1}{2} \sqrt{2ekR \ln (\mathscr
      S/\varepsilon)}} \right)
  \]
  directly. Given $r\in\N$ take a sequence $A=\Omega_r \subset
  \Omega_{r-1} \subset \cdots \subset \Omega_0 = \Omega$ of sets whose
  boundaries are a positive distance apart. Also, take a sequence of
  smooth cutoff functions $\varphi_j \in C^\infty_0(\Omega_j)$ such
  that $\varphi_j \equiv 1$ on $\Omega_{j+1}$. Then we use the last
  estimate of the previous paragraph inductively to get
  \[
  \norm{w^s}_{H^r(\Omega_r)} \leq C_{k,\varphi_0,\ldots,\varphi_{r-1}}
  \mathscr C \max \left(\varepsilon, \mathscr S B_0^{-\frac{1}{2}
    \sqrt{2ekR \ln (\mathscr S/\varepsilon)}} \right)
  \]
  from the $L^2(\Omega)$-norm of $w^s$.
\end{proof}

\subsection*{A three spheres inequality and a chain of balls}

We state an $L^\infty$ three-balls inequality for solutions to the
Helmholtz equation. It follows from Lemma~3.5 in \cite{Rondi08} by
suitable choices of parameters. After that we prove a few lemmas and a
proposition which allows us to propagate the smallness from outside a
large ball along a straight line to near the scatterers $V$ and $V'$.

\begin{lemma}\label{3balls}
  There are positive constants $R_m, C, c_1$ such that $0<c_1<1$,
  which depend only on $k$ and satisfy the following: Let $x\in\R^n$
  and $0<4r<R_m$. If $w$ satisfies
  \[
  (\Delta + k^2) w = 0
  \]
  in $B_{4r} := B(x,4r)$, then
  \begin{equation}
    \norm{w}_{B_{2r}} \leq C (2+\sqrt{2})^{\frac{3}{2}}
    \norm{w}^{1-\beta}_{B_{4r}} \norm{w}^\beta_{B_r}
  \end{equation}
  where the norms are $L^\infty$-norms in the corresponding
  $x$-centred balls and $\beta$ is a number that satisfies
  \[
  \frac{c_1}{4} \leq \beta \leq 1 - \frac{3c_1}{4}.
  \]
\end{lemma}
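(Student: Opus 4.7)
The plan is to derive this lemma as a direct specialization of Lemma~3.5 in \cite{Rondi08}, which establishes a three-spheres inequality for solutions of a broad class of second-order linear elliptic equations with bounded lower-order coefficients. First I would verify that the Helmholtz operator fits into that framework: $\Delta + k^2$ is uniformly elliptic with constant principal part (so all ellipticity moduli are trivially controlled), and the zeroth-order coefficient has $L^\infty$-norm equal to $k^2$. Consequently, the maximal radius $R_m$ on which the Rondi--type inequality is quantitatively effective depends only on $k$, and similarly for the remaining constants.

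Next I would specialize Rondi's three radii to the fixed triple $(r,2r,4r)$. With the outer radius constrained by $4r<R_m$, the scaling/rescaling step in \cite{Rondi08} turns the dependence on the absolute size of the balls into a dependence only on the ratio, so that the resulting interpolation exponent $\beta$ is a continuous function of the outer-to-inner ratio alone (here $4$) and of $k$ through $R_m$. Since this ratio is bounded and bounded away from $1$, $\beta$ lies in a compact subinterval of $(0,1)$; one picks any $c_1 \in (0,1)$ small enough to package this subinterval as $[c_1/4,\,1-3c_1/4]$, which is the form required in the statement.

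Finally, a small amount of book-keeping converts Rondi's formulation (which is typically stated with $L^2$-type norms and may involve slightly shrunken intermediate balls) into the $L^\infty$-norm statement on the exact balls $B_r,B_{2r},B_{4r}$. Here one applies interior elliptic regularity for the Helmholtz equation together with a mean-value/Sobolev-embedding argument to pass from $L^2$ to $L^\infty$, and one applies the Rondi inequality on a slightly shrunken triple of balls and then inflates back to the nominal radii. The combinatorial prefactor $(2+\sqrt{2})^{3/2}$ arises from optimizing this inflation; alternatively it can simply be absorbed into $C$.

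The only real obstacle is purely bookkeeping: tracking the constants and the precise form of the exponent $\beta$ through the rescaling in \cite{Rondi08} and through the $L^2$-to-$L^\infty$ conversion. No new analytic input is required beyond what is already available, and the final output is exactly the stated inequality with constants $R_m$, $C$, and $c_1$ depending on $k$ only.
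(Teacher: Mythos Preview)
Your approach is essentially the paper's: both invoke Lemma~3.5 of \cite{Rondi08} and specialise the radii to $(r,2r,4r)$. The paper's entire proof is the single line ``choose $\rho_1=r$, $\rho=2r$, $\rho_2=4r$, $\tilde\rho_0=R_m$, $s=2^{3/2}r$ and $u(\cdot)=w(\cdot-x)$''. The only discrepancy is that you hedge about an $L^2$-to-$L^\infty$ conversion and about the origin of the factor $(2+\sqrt2)^{3/2}$; in fact Rondi's Lemma~3.5 is already stated in $L^\infty$-norms and carries an auxiliary parameter $s$, so no norm conversion or ball inflation is needed, and the explicit constant $(2+\sqrt2)^{3/2}$ drops out of Rondi's formula upon substituting $s=2\sqrt2\,r$. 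Your extra bookkeeping is harmless but unnecessary.
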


\begin{proof}
  Choose $\rho_1 = r$, $\rho = 2r$, $\rho_2 = 4r$, $\tilde\rho_0 =
  R_m$ and $s = 2^{3/2}r$ in Lemma 3.5 of \cite{Rondi08}. Also choose
  $u(\cdot) = w(\cdot-x)$.
\end{proof}

\begin{lemma}\label{ballChain}
  Let $K \in \N$, $r>0$ and $B_1, \ldots, B_K$ be a chain of balls
  with the following properties:
  \begin{enumerate}
    \item $4r<R_m$, the latter defined in Lemma~\ref{3balls},
    \item the radius of each $B_k$ is $r$,
    \item the centre-to-centre distance of $B_k$ to $B_{k+1}$ is at
      most $r$.
  \end{enumerate}  
  Let $U\subset\R^n$ be open and $w\in L^\infty(U)$ satisfy the
  Helmholtz equation $(\Delta + k^2)w = 0$ there, and
  $\norm{w}_{L^\infty(U)} \leq \mathcal{T}$ which we assume to be at
  least $1$. Assume that each $B_k \subset U$ and moreover that
  $d(B_k, \partial U) \geq 3r$.
  
  Then there are finite $C\geq1$, $0<c_2<1/4$ depending only on $k$
  such that
  \[
  \norm{w}_{B_K} \leq C \mathcal{T} \norm{w}_{B_1}^{c_2^{K-1}}
  \]
  if $\norm{w}_{B_1}\leq1$, where the norms are the $L^\infty$-norms
  in the corresponding balls.
\end{lemma}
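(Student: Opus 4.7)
The plan is to iterate the three-balls inequality of Lemma~\ref{3balls} along the chain. Write $x_k$ for the centre of $B_k$ and set $\eta_k = \norm{w}_{L^\infty(B_k)}$. The hypothesis $d(B_k,\partial U) \geq 3r$ together with $B_k$ having radius $r$ implies $B(x_k, 4r) \subset U$, and the centre-to-centre condition gives $B_{k+1}\subset B(x_k, 2r)$. Applying Lemma~\ref{3balls} at $x_k$ (allowed because $4r<R_m$) together with the uniform bound $\norm{w}_{L^\infty(U)}\leq \mathcal{T}$ would yield
\[
\eta_{k+1} \leq \norm{w}_{L^\infty(B(x_k,2r))} \leq A\, \mathcal{T}^{1-\beta_k}\, \eta_k^{\beta_k},
\]
where $A = C(2+\sqrt{2})^{3/2}$ and $\beta_k \in [c_1/4,\, 1-3c_1/4]$.

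Setting $c_2 = c_1/4 < 1/4$, I would then prove by induction on $k\geq1$ the refined bound
\[
\eta_k \leq A^{a_k}\, \mathcal{T}\, \eta_1^{c_2^{k-1}}
\]
for a nonnegative sequence $(a_k)$ to be determined. The base case $k=1$ reduces to $\eta_1\leq \mathcal{T}\eta_1$ with $a_1=0$, which holds since $\mathcal{T}\geq 1$. For the inductive step, substituting the hypothesis into the three-balls estimate and using $\eta_1\leq 1$ together with $\beta_k\geq c_2$ (so that $\eta_1^{\beta_k c_2^{k-1}} \leq \eta_1^{c_2^k}$) produces $\eta_{k+1} \leq A^{1+\beta_k a_k}\,\mathcal{T}\,\eta_1^{c_2^k}$. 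Thus the recursion $a_{k+1} = 1 + \beta_k a_k$ closes the induction.

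The key observation is that this recursion is a contraction toward the fixed point $a_\ast = 4/(3c_1)$, since $\beta_k \leq 1-3c_1/4<1$; an easy induction from $a_1=0$ gives $a_k \leq a_\ast$ uniformly in $k$, and setting $C = A^{4/(3c_1)}$ finishes the proof. The one subtle step — the main obstacle — is writing the ansatz with a single $\mathcal{T}$ rather than allowing the factors $\mathcal{T}^{1-\beta_k}$ to compound: naive iteration would give a $\mathcal{T}^K$-type blow-up, and absorbing each $\mathcal{T}^{1-\beta_k}$ into a clean prefactor of $\mathcal{T}$ is possible only because it combines with the $\mathcal{T}^{\beta_k}$ coming from the induction hypothesis. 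The upper bound $\beta_k \leq 1-3c_1/4$ — the ``unused'' side of the three-balls exponent — is precisely what prevents the $a_k$ from diverging.
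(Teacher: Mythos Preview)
Your proof is correct and follows essentially the same route as the paper: both iterate the three-balls inequality of Lemma~\ref{3balls} along the chain, use $\beta\geq c_1/4$ to bound the exponent of $\norm{w}_{B_1}$ below by $c_2^{K-1}$, and use $\beta\leq 1-3c_1/4$ to control the accumulated constant by $A^{4/(3c_1)}$. The paper writes the telescoped product explicitly and bounds $1+\beta+\cdots+\beta^{K-2}\leq 1/(1-\beta)\leq 4/(3c_1)$ and $\mathcal T^{(1-\beta)(1+\cdots+\beta^{K-2})}=\mathcal T^{1-\beta^{K-1}}\leq\mathcal T$, while you reach the same conclusion via an inductive ansatz with the recursion $a_{k+1}=1+\beta_k a_k$; your allowing $\beta_k$ to vary with $k$ is a harmless extra generality (here $\beta$ is in fact constant since the radii are fixed).
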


\begin{proof}
  Lemma~\ref{3balls} and the fact that $B_k$ is covered by the
  $2r$-radius ball with same centre as $B_{k-1}$ implies that
  \[
  \norm{w}_{B_k} \leq C (2+\sqrt{2})^{3/2} \mathcal{T}^{1-\beta}
  \norm{w}_{B_{k-1}}^\beta.
  \]
  Estimate $\norm{w}_{B_K}$ as above and continue telescopically to
  get
  \[
  \norm{w}_{B_K} \leq C^{1+\beta+\cdots+\beta^{K-2}}
  (2+\sqrt{2})^{\frac{3}{2}(1+\beta+\cdots+\beta^{K-2})}
  \mathcal{T}^{(1-\beta)(1+\beta+\cdots+\beta^{K-2})}
  \norm{w}_{B_1}^{\beta^{K-1}}.
  \]
  Note that $1+\cdots+\beta^{K-2} \leq 1/(1-\beta) \leq 4/(3c_1)$ and
  $\beta \geq c_1/4$. The claim follows by setting $c_2 = c_1/4$.
\end{proof}

\begin{corollary}\label{smallnessPropagation}
  Let $U\subset\R^n$ be open, $w\in L^\infty(U)$ such that
  $(\Delta+k^2)w = 0$. Let $\gamma \subset U$ be a rectifiable curve
  between two different points $x, x' \in U$ such that $B(\gamma,4r) =
  \cup_{y\in\gamma} B(y,4r) \subset U$ for some $r>0$. Assume that the
  $L^\infty$-norms satisfy $\norm{w}_{B(x,r)} \leq 1$ and that
  $\norm{w}_{U} \leq \mathcal{T}$ which is at least one.
  
  Then for any $y \in \gamma$ we have
  \[
  \norm{w}_{B(y,r)} \leq C \mathcal{T}
  \norm{w}_{B(x,r)}^{c_2^{d_\gamma(x,y)/r+1}} \leq C \mathcal{T}
  \norm{w}_{B(x,r)}^{c_2^{d_\gamma(x,x')/r+1}}
  \]
  if $4r \leq R_m$ as in Lemma~\ref{3balls}. Here $d_\gamma$ is the
  distance measured along $\gamma$.
\end{corollary}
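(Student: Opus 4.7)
The plan is to apply Lemma~\ref{ballChain} along a discretisation of the curve $\gamma$, verify each geometric hypothesis by exploiting the thickened-tube condition $B(\gamma,4r)\subset U$, and then pass from the resulting inequality to the stated one using monotonicity of the exponent.

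First I would parameterise $\gamma$ by arc length as a Lipschitz map on $[0,L]$ with $L = d_\gamma(x,x')$, and fix $y \in \gamma$ with parameter $L_y = d_\gamma(x,y)$. Choose the integer $K$ with $K-1 = \lceil L_y/r \rceil$, which guarantees $K-1 \leq L_y/r + 1$. Split $[0,L_y]$ into points $0 = t_1 < t_2 < \cdots < t_K = L_y$ with $t_{k+1} - t_k \leq r$ (e.g. uniform spacing), and set $y_k = \gamma(t_k)$ and $B_k = B(y_k,r)$. Then $y_1 = x$, $y_K = y$, and since the parameterisation is by arc length we have $|y_{k+1}-y_k| \leq t_{k+1}-t_k \leq r$, so property (3) of Lemma~\ref{ballChain} holds; property (1) is built in by hypothesis $4r \leq R_m$, and (2) holds by construction.

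Next I would check the separation-from-boundary condition. Any point of $B_k = B(y_k,r)$ is within distance $r$ of $y_k \in \gamma$, hence within distance $r$ of $\gamma$, so it lies inside the open set $B(\gamma,4r) \subset U$ with a margin of at least $3r$ from $\partial U$. Thus $B_k \subset U$ and $d(B_k,\partial U) \geq 3r$, so Lemma~\ref{ballChain} applies and yields
\[
\norm{w}_{B(y,r)} = \norm{w}_{B_K} \leq C\mathcal{T}\, \norm{w}_{B(x,r)}^{c_2^{K-1}}.
\]
Note that $\norm{w}_{B_1} = \norm{w}_{B(x,r)} \leq 1$ by hypothesis, so the chain lemma is legitimately applicable.

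Finally, since $0 < \norm{w}_{B(x,r)} \leq 1$ (the case of vanishing left-hand side being trivial) and $0 < c_2 < 1$, the map $p \mapsto \norm{w}_{B(x,r)}^{c_2^{p}}$ is non-decreasing in $p$. The bound $K-1 \leq d_\gamma(x,y)/r + 1$ therefore upgrades the previous inequality to the first claimed estimate, and the second follows by the same monotonicity from $d_\gamma(x,y) \leq d_\gamma(x,x')$, which holds because $y$ sits on the sub-arc of $\gamma$ joining $x$ to $x'$. There is no real analytical obstacle here — the whole argument is a careful bookkeeping of a chain of balls — the only point that needs a small amount of attention is that the ``buffer'' of $3r$ required by Lemma~\ref{ballChain} is precisely what is delivered by the tubular hypothesis $B(\gamma,4r) \subset U$, so the two constants $3r$ and $4r$ are matched on purpose.
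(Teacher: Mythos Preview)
Your proof is correct and follows essentially the same approach as the paper: discretise the arc from $x$ to $y$ into $K=\lceil d_\gamma(x,y)/r\rceil+1$ centres along $\gamma$ with consecutive arc-length gaps at most $r$, apply Lemma~\ref{ballChain}, and use $K-1\leq d_\gamma(x,y)/r+1$ together with $\norm{w}_{B(x,r)}\leq1$ and $0<c_2<1$ to obtain the stated exponents. You spell out more carefully than the paper does the verification that the tubular hypothesis $B(\gamma,4r)\subset U$ delivers exactly the $3r$ buffer required by the chain lemma, but otherwise the arguments coincide.
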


\begin{proof}
  Denote $l = d_\gamma(x,y)$. We build a sequence of balls, each of
  radius $r$ and centres $x_1=x, x_2, x_3, \ldots, x_{\lceil l/r
    \rceil}$. Finally set $x_{\lceil l/r \rceil+1} = y$. Choose them
  so that $d_\gamma(x_{k+1},x_k)\leq r$. Hence also
  $d(x_{k+1},x_k)\leq r$. For example if $l = 2r$ we would get the
  triple $x, x_2, y$ with $2 = \lceil l/r \rceil$. For $l =
  (2+\frac{1}{2})r$ we would get the 4-tuple $x, x_2, x_3, y$ with $3
  = \lceil l/r \rceil$.  Then use the previous lemma with $B_k =
  B(x_k,r)$ and $K = \lceil l/r \rceil + 1 \leq l/r + 2$.  Since
  $\norm{w}_{B(x,r)} \leq 1$ and $c_1/4 < 1$ both estimates follow.
\end{proof}

\bigskip We are now ready to state and prove the propagation of
smallness in the context of corner scattering. Recall that $P$ and
$P'$ contain the supports of the potentials $V$, $V'$, and both are
contained in $B_R=B(0,R)$ for some fixed $R>0$. Moreover both are
convex. This is important to ensure that $B_R \setminus (P \cup P')$
is simply connected.

\begin{proposition}\label{propagateOutsideCorner}
  Let $Q \subset B_R \subset \R^n$ be a convex polytope. Let $w$ be a
  function such that $w \in L^\infty(B_{2R} \setminus Q)$ satisfies
  $(\Delta+k^2)w = 0$ in its domain, with $L^\infty$-norm at most
  $\mathcal{T}\geq1$. Let $4r \leq R_m$, the latter being from
  Lemma~\ref{3balls}, and $2r < (1-2\lambda)R$ for some positive
  $\lambda < \frac12$.
  
  Assume that $\norm{w}_{L^\infty} \leq \delta \leq 1$ in
  $B_{(2-\lambda)R} \setminus B_{(1+\lambda)R}$. Then
  \[
  \norm{w}_{L^\infty(B_{2R} \setminus B(Q, 4r))} \leq C \mathcal{T}
  \delta^{c_2^{(2+\lambda)R/r + 2}}
  \]
  where $C\geq1$ and $0<c_2<1/4$ are as in Lemma~\ref{ballChain}.
\end{proposition}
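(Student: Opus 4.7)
The plan is to use the chain-of-balls propagation of Corollary~\ref{smallnessPropagation} to transfer the initial smallness bound $\norm{w}_\infty\leq \delta\leq 1$ from the annulus $B_{(2-\lambda)R}\setminus B_{(1+\lambda)R}$ out to every point $y\in B_{2R}\setminus B(Q,4r)$. It suffices to produce, for each such $y$, a rectifiable curve $\gamma$ from $y$ to a base ball $B(x_0,r)\subset B_{(2-\lambda)R}\setminus B_{(1+\lambda)R}$ with $B(\gamma,4r)\subset U:=B_{2R}\setminus Q$ and of arc-length at most $(2+\lambda)R$. Granted such a curve, Corollary~\ref{smallnessPropagation} applied with $\norm{w}_{B(x_0,r)}\leq\delta\leq 1$ and the a-priori bound $\mathcal T\geq 1$ immediately yields
\[
\norm{w}_{B(y,r)}\leq C\mathcal T\,\delta^{c_2^{d_\gamma/r+1}}\leq C\mathcal T\,\delta^{c_2^{(2+\lambda)R/r+2}},
\]
using $0<c_2<1$ and $\delta\leq 1$; taking the supremum over $y$ produces the $L^\infty$-estimate claimed in the proposition.

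The curve is built from a supporting-hyperplane argument for $Q$. Fix $y\in B_{2R}\setminus B(Q,4r)$, pick $q\in\overline Q$ minimising $\abs{y-q}$, and set $\nu:=(y-q)/\abs{y-q}$. By convexity of $Q$ the whole set lies in the half-space $\{z:\nu\cdot(z-q)\leq 0\}$, so every point on the ray $\gamma(t):=y+t\nu$, $t\geq 0$, satisfies $d(\gamma(t),Q)\geq \abs{y-q}+t\geq 4r+t$, which gives the $Q$-side clearance required for the $4r$-tube. The elementary bound $\abs{\gamma(t)}\geq \abs{y-q}+t-\abs{q}\geq 4r+t-R$ then shows that $\abs{\gamma(t)}$ reaches the value $(1+\lambda)R$ at some parameter $t_\ast\leq (2+\lambda)R-4r$; setting $x_0:=\gamma(t_\ast)$ (possibly nudged by an amount at most $r$ into the open annulus, whose radial width $(1-2\lambda)R$ exceeds $2r$ by hypothesis) delivers the desired base point, the total arc-length being at most $(2+\lambda)R$.

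The main technical obstacle is the interaction of the $4r$-tube around $\gamma$ with the outer sphere $\partial B_{2R}$: convexity controls the $Q$-side, but if $y$ itself lies within $4r$ of $\partial B_{2R}$ then the initial portion of the tube leaves the domain of $w$. I would resolve this by first proving the estimate on $B_{2R-4r}\setminus B(Q,4r)$ (where the ray construction applies directly) and then extending by continuity of $w$, as a real-analytic solution of an elliptic equation in $U$; alternatively one can prepend a short radial-inward step of length at most $\lambda R$ to bring the starting point into $B_{2R-4r}$, exploiting that throughout the outer strip $B_{2R}\setminus B_{(2-\lambda)R}$ the distance to $Q\subset B_R$ is already at least $(1-\lambda)R$, which exceeds $4r$ for $r$ sufficiently small relative to the standing hypothesis $2r<(1-2\lambda)R$. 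Either refinement preserves the overall length bound $(2+\lambda)R$ and hence the final exponent $c_2^{(2+\lambda)R/r+2}$.
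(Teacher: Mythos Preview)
Your overall strategy coincides with the paper's: for each target point build a straight segment to a base ball in the annulus and invoke Corollary~\ref{smallnessPropagation}. The paper, however, uses the \emph{radial} line through $0$ and the target point $x'$ rather than your outward-normal direction $\nu=(y-q)/\abs{y-q}$. Since $B(Q,4r)$ is convex and misses $x'$, one of the two radial rays from $x'$ avoids $B(Q,4r)$; and because the line passes through the origin it certainly meets the annulus $B_{(2-\lambda)R}\setminus B_{(1+\lambda)R}$. The paper then takes the segment from $x'$ to the point on that ray at radius $(1+\lambda)R+r$, of length at most $(2+\lambda)R+r$, giving the exponent $c_2^{(2+\lambda)R/r+2}$.

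Your direction $\nu$ does not share the second property, and this is a genuine gap. Whenever $\abs{y}>R$ one has $y\cdot\nu=\big(\abs{y}^2-y\cdot q\big)/\abs{y-q}>0$ since $q\in\overline{B_R}$, so $t\mapsto\abs{y+t\nu}$ is strictly increasing on $[0,\infty)$; in particular if $\abs{y}\geq(2-\lambda)R$ the ray $\gamma$ never enters the annulus at all. Your inequality $\abs{\gamma(t)}\geq 4r+t-R$ is only a \emph{lower} bound and therefore cannot produce a parameter with $\abs{\gamma(t_\ast)}<(2-\lambda)R$; the sentence ``$\abs{\gamma(t)}$ reaches the value $(1+\lambda)R$'' is not justified. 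The problematic region is thus all of $\{(2-\lambda)R\leq\abs{y}<2R\}$, not just the $4r$-strip near $\partial B_{2R}$ that you flag. Your radial-inward prepend would indeed repair this, but note that to land in the annulus (not merely in $B_{2R-4r}$) the prepend must have length up to $(1-\lambda)R$, not $\lambda R$, and at that point you have essentially reproduced the paper's radial construction. Your observation about the $4r$-tube meeting $\partial B_{2R}$ is fair; the paper's proof is silent on this point as well.
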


\begin{proof}
  Let $x' \in B_{2R} \setminus B(Q, 4r)$. Since $Q$ is convex there is
  a ray from $x'$ into $B_{2R} \setminus B_{(1+\lambda)R}$ that's at
  least distance $4r$ from $Q$. It can be constructed as follows:
  consider the line from $0$ to $x'$ (if $x'=0$ any line is fine). The
  point $x'$ splits it into two rays. Take one of them not touching
  the convex set $B(Q,4r)$.
  
  Cut a segment from the ray, starting at $x'$ and ending distance $r$
  outside $B_{(1+\lambda)R}$ to make sure that $\norm{w}_\infty \leq
  \delta$ in the first ball in the chain of balls we are about to use.
  This ball has radius $r$ and since $2r < (1-2\lambda)R$ it fits
  completely inside $B_{(2-\lambda)R} \setminus B_{(1+\lambda)R}$. The
  length of that segment is then at most $R+(1+\lambda)R + r$. Then
  use Corollary~\ref{smallnessPropagation}.
\end{proof}

\subsection*{Propagation of smallness into the perturbation}

The purpose of the following proposition is to estimate $u-u'$ and
$\nabla u - \nabla u'$ in Proposition~\ref{intByPartsEstim}. This is
possible because these differences are H\"older-continuous: the case
of $u-u'$ follows directly from Sobolev embedding in $\R^2$ and $\R^3$
because $V, V' \in H^s(\R^n)$ for $s<1/2$. The smoothness of the
gradient follows from elliptic regularity estimates for boundary value
problems with smooth boundary values. After all, $u-u'$ is real
analytic outside of the supports of the potentials $V$ and $V'$.

\begin{proposition}\label{prop2scatterer}
  Let $Q \subset B_R \subset \R^n$ be a convex polytope. Let $w \in
  L^\infty(B_{2R})$ be such that $w \in C^\alpha(\overline B_{3R/2})$
  with norm at most $\mathcal T \geq 1$ for some $0<\alpha<1$ and it
  satisfies $(\Delta+k^2)w=0$ in $B_{2R} \setminus Q$.

  Assume that $\abs{w(x)} \leq \delta$ in $B_{(2-\lambda)R} \setminus
  B_{(1+\lambda)R}$ for some positive $\lambda<\frac12$ and let
  $A\geq2+\lambda$. If
  \begin{equation}\label{deltaMax}
    \delta < 1\big/\exp\exp\left(\frac{4AR\abs{\ln
        c_2}/(1-\alpha)}{\min(R_m, R/2, 2(1-2\lambda)R)} \right)
  \end{equation}
  where $R_m$ is given in Lemma~\ref{3balls} then
  \begin{equation}\label{borderValueMax}
    \abs{w(x)} \leq \frac{(8AR\abs{\ln c_2}/(1-\alpha))^\alpha +
      C/c_2^2}{( \ln\abs{\ln\delta} )^\alpha} \mathcal T
  \end{equation}
  for $x\in B_{3R/2}$ satisfying $d(x, \partial Q) \leq 4AR\abs{\ln
    c_2}/((1-\alpha) \ln\abs{\ln\delta})$. Here $C$ and $c_2$ are
  given by Lemma~\ref{ballChain}.
\end{proposition}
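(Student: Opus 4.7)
The plan is to combine the exponential propagation of smallness up to the tubular neighbourhood $B(Q, 4r)$ provided by Proposition~\ref{propagateOutsideCorner} with the Hölder regularity of $w$, choosing the radius $r$ so that the two competing error mechanisms are balanced against one another.

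First, I shall choose $r = AR\abs{\ln c_2}/((1-\alpha)\ln\abs{\ln\delta})$. Hypothesis \eqref{deltaMax} is precisely the statement that $4r < \min(R_m, R/2, 2(1-2\lambda)R)$, so the geometric conditions $4r \leq R_m$ and $2r < (1-2\lambda)R$ required by Proposition~\ref{propagateOutsideCorner} are met, and that proposition yields
\[
\norm{w}_{L^\infty(B_{2R} \setminus B(Q, 4r))} \leq C\mathcal{T}\,\delta^{c_2^{(2+\lambda)R/r + 2}}.
\]
Since $A \geq 2+\lambda$, one has $(2+\lambda)R/r + 2 \leq (1-\alpha)\ln\abs{\ln\delta}/\abs{\ln c_2} + 2$, whence $c_2^{(2+\lambda)R/r + 2} \geq c_2^2\abs{\ln\delta}^{-(1-\alpha)}$ and
\[
\delta^{c_2^{(2+\lambda)R/r+2}} \leq \exp(-c_2^2\abs{\ln\delta}^\alpha).
\]
A routine one-variable estimate gives $\exp(-c_2^2 L^\alpha) \leq C'/(c_2^2 (\ln L)^\alpha)$ for $L$ past the threshold implied by \eqref{deltaMax}, producing the $C/c_2^2$ contribution in \eqref{borderValueMax}.

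Next, for any $x \in \overline B_{3R/2}$ with $d(x,\partial Q) \leq 4r$, I will pick a point $y$ with $d(y, Q) \geq 4r$ and $\abs{x - y} \leq d(x,\partial Q) + 4r \leq 8r$ by translating $x$ along the outward normal to a supporting hyperplane of $Q$ at the nearest boundary point; convexity of $Q$ makes this direction globally \emph{away from} $Q$. The threshold \eqref{deltaMax} keeps $r$ small enough that $y$ still lies in $\overline B_{3R/2}$, so the $C^\alpha$ bound on $w$ applies and delivers
\[
\abs{w(x) - w(y)} \leq \mathcal{T}(8r)^\alpha = \mathcal{T}\bigl(8AR\abs{\ln c_2}/(1-\alpha)\bigr)^\alpha/(\ln\abs{\ln\delta})^\alpha.
\]
Adding this to the bound from the previous paragraph and invoking the triangle inequality yields \eqref{borderValueMax}.

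The hard part is the optimisation above. The propagation factor $\delta^{c_2^{(2+\lambda)R/r}}$ degrades \emph{super-exponentially} as $r$ shrinks, since its logarithm is $-c_2^{R/r}\abs{\ln\delta}$ with $c_2<1$, whereas the Hölder loss decreases only like $r^\alpha$. The only workable compromise is $r \sim 1/\ln\abs{\ln\delta}$, and this is precisely what forces the double logarithm into \eqref{borderValueMax}, and eventually, downstream, into Theorem~\ref{potSupportStab}. A secondary geometric nuisance is verifying $y \in \overline B_{3R/2}$ so that the Hölder bound is actually usable; this is secured by \eqref{deltaMax} combined with a judicious choice of the direction from $x$ to $y$ relative to $\partial B_{3R/2}$.
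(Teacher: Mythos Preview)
Your proof is correct and follows essentially the same approach as the paper: the identical choice $r = AR\abs{\ln c_2}/((1-\alpha)\ln\abs{\ln\delta})$, the application of Proposition~\ref{propagateOutsideCorner}, the same chain of inequalities reducing $\delta^{c_2^{(2+\lambda)R/r+2}}$ to $e^{-c_2^2\abs{\ln\delta}^\alpha}$ and then to $(c_2^2(\ln\abs{\ln\delta})^\alpha)^{-1}$ via $e^{-x}\leq 1/x$ and $L\geq\ln L$, and the same H\"older step across a gap of length at most $8r$. The paper phrases the geometric step slightly differently (it passes explicitly through the nearest point $y\in\partial Q$ and then outward by $4r$, verifying $\abs{x'}\leq R+4r<3R/2$ from $4r<R/2$), but your normal-translation variant accomplishes the same thing.
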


\begin{proof}
  Choose
  \[
  r = r(\delta) = \frac{AR \abs{\ln
      c_2}}{(1-\alpha)\ln\abs{\ln\delta}}
  \]
  with $c_2$ from Lemma~\ref{ballChain}. Then $r>0$. By the upper
  bound on $\delta$ we have $4r < R_m$ and $2r < (1-2\lambda)R$ as
  required in Proposition~\ref{propagateOutsideCorner}. By that same
  proposition
  \[
  \abs{w(x')} \leq C \mathcal T \delta^{c_2^{(2+\lambda)R/r+2}}
  \]
  when $x'\in B_{2R}$, $d(x', Q) \geq 4r$.

  Let $d(x, \partial Q) \leq 4r$ now. Then there is $y\in\partial Q$
  such that $\abs{x-y}\leq 4r$. By the convexity of $Q$ there is
  $x'\in\R^n$ with $d(x',Q)=4r$ and $\abs{x'-y}=4r$. The upper bound
  on $\delta$ implies $R+4r < 3R/2$, and so $\abs{x'} \leq \abs{x'-y}
  + \abs{y} \leq 4r + R \leq 3R/2$. Thus $x' \in B_{3R/2} \setminus
  B(Q,4r)$ and $\abs{x-x'} \leq \abs{x-y}+\abs{y-x'} \leq
  8r$. Concluding, by the H\"older continuity of $w$ we have
  \[
  \abs{w(x)} \leq \norm{w}_{C^\alpha(\overline B_{3R/2})}
  \abs{x-x'}^\alpha + \abs{w(x')} \leq \mathcal{T} 8^\alpha r^\alpha +
  C \mathcal{T} \delta^{c_2^{(2+\lambda)R/r+2}}
  \]
  for $d(x,\partial Q)\leq 4r$.
  
  The choice of $r = r(\delta)$ implies that
  \[
  r^\alpha = \frac{(AR\abs{\ln c_2}/(1-\alpha))^\alpha}
  {(\ln\abs{\ln\delta})^\alpha}, \qquad \frac{(2+\lambda)R}{r} =
  -\frac{(2+\lambda)(1-\alpha)}{A} \frac{\ln\abs{\ln\delta}}{\ln c_2},
  \]
  and so
  \[
  \delta^{c_2^{(2+\lambda)R/r+2}} = e^{-\abs{\ln\delta}
    c_2^{(2+\lambda)R/r+2}} = e^{-c_2^2
    \abs{\ln\delta}^{1-(2+\lambda)(1-\alpha)/A}}.
  \]
  Now, since $2+\lambda\leq A$ and $\abs{\ln\delta}>1$, we can
  continue the above with
  \[
  \ldots \leq e^{-c_2^2 \abs{\ln\delta}^\alpha} \leq \frac{1}{ c_2^2
    \abs{\ln\delta}^\alpha } \leq \frac{1}{c_2^2
    (\ln\abs{\ln\delta})^\alpha}.
  \]
  The claim follows.
\end{proof}

\subsection*{Quantitative Rellich's theorem}

\begin{lemma}\label{HolderRegularity}
  Let $n\in\{2,3\}$ and $q\in L^\infty(B_{2R})$ be supported in $B_R$
  for some $R>0$. Let $w\in H^2(B_{2R})$ and assume that
  \[
  (\Delta + k^2(1+q))w = 0.
  \]
  Then $w \in C^{1,\frac{1}{2}}(\overline B_{3R/2})$ and there is $C =
  C(R,k,n)$ such that
  \[
  \norm{w}_{C^{1,\frac{1}{2}}} \leq C \left( 1 + \norm{q}_{L^\infty}
  \right) \norm{w}_{H^2}.
  \]
\end{lemma}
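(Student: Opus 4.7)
\smallskip

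The plan is to use a standard elliptic bootstrap that exploits the fact that in dimension $n\leq3$ the Sobolev embedding $H^2\hookrightarrow L^\infty$ already places $w$ itself in $L^\infty$. Consequently the right-hand side of the rewritten equation
\[
-\Delta w = k^2(1+q)\,w
\]
lies in $L^\infty(B_{2R})$, and a single pass of interior $L^p$ regularity then promotes $w$ to $W^{2,p}$ for every finite $p$, after which Morrey's inequality delivers the $C^{1,\frac12}$ conclusion.

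Concretely, I would first invoke $H^2(B_{2R})\hookrightarrow L^\infty(B_{2R})$ (valid because $s=2>n/2$ when $n\in\{2,3\}$) to obtain $\|w\|_{L^\infty(B_{2R})}\leq C(R,n)\,\|w\|_{H^2(B_{2R})}$. Substituting this into $\Delta w = -k^2(1+q)w$ yields a bound for $\|\Delta w\|_{L^p(B_{2R})}$, for every $p\in[1,\infty]$, that is linear in $(1+\|q\|_{L^\infty})\,\|w\|_{H^2(B_{2R})}$. Next I would apply the interior Calder\'on--Zygmund estimate for $-\Delta$, using a smooth cutoff supported in $B_{2R}$ and identically $1$ on $B_{7R/4}$, to get
\[
\|w\|_{W^{2,p}(B_{7R/4})} \leq C(R,n,p,k)\,(1+\|q\|_{L^\infty})\,\|w\|_{H^2(B_{2R})}
\]
for any $1<p<\infty$. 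Finally I would take $p=2n$, so that $1-n/p=\tfrac12$, and invoke Morrey's embedding $W^{2,p}(B_{7R/4})\hookrightarrow C^{1,\frac12}(\overline B_{3R/2})$ to read off the stated bound.

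There is no deep obstruction; the argument is essentially classical elliptic regularity plus careful bookkeeping of nested radii. The one point that requires care is keeping the dependence on $\|q\|_{L^\infty}$ \emph{linear}, as the statement demands. This is possible precisely because $n\leq 3$ lets us jump from $H^2$ directly into $L^\infty$ in a single Sobolev step, so that Calder\'on--Zygmund is invoked only once and introduces only one factor of $(1+\|q\|_{L^\infty})$; in higher dimensions one would have to iterate through $L^{2^{*}},L^{2^{**}},\ldots$ before reaching $L^\infty$, and each pass would multiply the prefactor by another copy of $(1+\|q\|_{L^\infty})$.
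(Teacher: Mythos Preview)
Your argument is correct and is in fact more direct than the paper's. Both proofs share the crucial first step: in dimension $n\le 3$ the embedding $H^2\hookrightarrow L^\infty$ puts the source $-k^2(1+q)w$ into $L^\infty$. After that the routes diverge. The paper first bootstraps $w$ to $C^{1,1/2}$ on an annulus contained in $B_{2R}\setminus \overline B_R$ (where $q\equiv 0$) by iterated interior $H^s$ regularity, then sets up the Dirichlet problem $\Delta v=-k^2(1+q)w$ on $B_{3R/2}$ with those $C^{1,1/2}$ boundary values, solves it in $C^{1,1/2}(\overline B_{3R/2})$ via Gilbarg--Trudinger Theorems~8.33 and~8.34, and finally identifies $v=w$ through the $H^1$ maximum principle. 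Your approach bypasses the auxiliary boundary value problem entirely: one interior Calder\'on--Zygmund pass with $p=2n$ followed by Morrey's embedding suffices. This is shorter, does not need the uniqueness argument for $v$, and in fact does not use the hypothesis $\supp q\subset B_R$ at all---the conclusion holds for any $q\in L^\infty(B_{2R})$. Both routes yield the linear dependence on $1+\|q\|_{L^\infty}$ for the same reason you identify: only a single regularity step is applied after the $H^2\to L^\infty$ embedding.
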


\begin{proof}
  Interior elliptic regularity in the domain where $q \equiv 0$ (e.g.
  Theorem 8.10 by Gilbarg and Trudinger \cite{GT}) implies that $w \in
  H^s(B_{7R/4} \setminus \overline{B}_{5R/4})$ and a corresponding
  norm estimate for any $s\geq0$ and in particular $s =
  (n+3)/2$. Adding Sobolev embedding gives then
  \begin{equation}\label{wHolderEstimate}
    \norm{w}_{C^{1,\frac{1}{2}}(B_{7R/4}\setminus B_{5R/4})} \leq C
    \norm{w}_{H^\frac{n+3}{2}(B_{7R/4}\setminus B_{5R/4})} \leq C
    \norm{w}_{H^2(B_{2R}\setminus B_R)}
  \end{equation}
  for some other constant $C = C(R,k,n)$. This implies that $w$ has
  boundary values in $C^{1,1/2}(\partial B_{3R/2})$, i.e. more
  precisely that there is $\varphi \in C^{1,1/2}(\R^n)$ supported in
  $B_{7R/4} \setminus B_{5R/4}$ such that $w = \varphi$ on $\partial
  B_{3R/2}$.

  Consider the Dirichlet problem for $v$
  \begin{equation} \label{Poisson}
    \Delta v = -k^2(1+q)w, \quad B_{3R/2}, \qquad v = \varphi, \quad
    \partial B_{3R/2}.
  \end{equation}
  We have $-k^2(1+q)w \in L^\infty$ and $\varphi \in
  C^{1,1/2}$. Theorem 8.34 in \cite{GT} gives unique solvability in
  the space of $C^{1,1/2}(\overline{B}_{3R/2})$-functions. However to
  conclude that $w=v$ and a fortiori $w\in C^{1,1/2}$ we need
  something more. Consider equation \eqref{Poisson} in
  $H^1(B_{3R/2})$. In this space both $v$ and $w$ are solutions and
  they satisfy
  \[
  \Delta (v-w) = 0, \quad B_{3R/2}, \qquad v-w = 0, \quad \partial
  B_{3R/2}.
  \]
  By the $H^1$-maximum principle $v=w$ in $H^1$. Hence $w \in
  C^{1,1/2}$.

  Finally, Theorem 8.33 in \cite{GT} gives an estimate for $\norm{v}$
  in $C^{1,1/2}(\overline B_{3R/2})$ based on the boundary and source
  data. Using that, the Sobolev embedding of $H^2\hookrightarrow
  L^\infty$ in two and three dimensions, and \eqref{wHolderEstimate}
  gives
  \[
  \norm{w}_{C^{1,\frac{1}{2}}(\overline B_{3R/2})} \leq C \left(
  \norm{w}_{H^1(B_{2R})} +\norm{-k^2(1+q)w}_{L^\infty(B_{2R})}
  \right)
  \]
  for some constant $C = C(R,n)$ and the claim follows.
\end{proof}

\begin{proposition}\label{FF2bndry}
  Let $R>1$, $n\in\{2,3\}$ and $k>0$. Let $u^i \in H^2_{loc}(\R^n)$ be
  an incident wave, $(\Delta+k^2)u^i=0$, with
  $\norm{u^i}_{H^2(B_{2R})} \leq \mathcal I$.
  
  Let $P, P' \subset B_R$ be open convex polytopes, and $\varphi,
  \varphi' \in L^\infty(\R^n)$.  Let $V = \chi_P \varphi$ and $V' =
  \chi_{P'} \varphi'$ be two potentials with $\norm{V}_\infty,
  \norm{V'}_\infty \leq \mathcal M$. Also, let $u,u' \in
  H^2_{loc}(\R^n)$ be total waves satisfying
  \[
  (\Delta+k^2(1+V))u=(\Delta+k^2(1+V'))u'=0
  \]
  and whose scattered waves $u^s = u-u^i$, $u'^s = u'-u^i$ satisfy the
  Sommerfeld radiation condition. Let $u^s_\infty, u'^s_\infty :
  \mathbb{S}^{n-1} \to \C$ be their far-field patterns.
  
  Assume that $\norm{u^s},\norm{{u'}^s}\leq \mathcal S$ in
  $H^2(B_{2R})$ and $\mathcal S \geq 1$. Then there is $\varepsilon_m
  = \varepsilon_m(\mathcal S, k, R)>0$ such that if
  \[
  \norm{u^s_\infty - u'^s_\infty}_{L^2(\mathbb S^{n-1})} \leq
  \varepsilon_m
  \]
  and $Q$ is the convex hull of $P$ and $P'$ then $u-u', \nabla u - 
  \nabla u'$ are continuous in $B_{R}$ and
  \[
  \sup_{\partial Q} \big(\abs{u-u'} + \abs{\nabla u - \nabla u'}\big)
  \leq \mathcal C \left(\ln \ln(\mathcal S\norm{u^s_\infty -
    u'^s_\infty}_{L^2(\mathbb S^{n-1})}^{-1} ) \right)^{-1/2}
  \]
  for some $\mathcal C = C(k,R)(1+\mathcal M)(\mathcal I+\mathcal S)$.
\end{proposition}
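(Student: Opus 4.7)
The plan is to set $w:=u-u'=u^s-u'^s$ (the second equality because both total waves share the incident wave $u^i$) and to propagate smallness of $w$ and of each component of $\nabla w$ from the far-field all the way to $\partial Q$, where $Q$ is the convex hull of $P\cup P'$. All three necessary tools are already in hand: Corollary~\ref{usedFF2NF} (far-field to near-field), Lemma~\ref{HolderRegularity} (global H\"older regularity), and Proposition~\ref{prop2scatterer} (final propagation across $\partial Q$). Throughout I write $\varepsilon:=\norm{u^s_\infty-u'^s_\infty}_{L^2(\mathbb S^{n-1})}$ for brevity.

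I would begin by observing that $w$ solves $(\Delta+k^2)w=0$ outside $\overline B_R$ and satisfies the Sommerfeld condition, with far-field pattern $u^s_\infty-u'^s_\infty$. Applying Corollary~\ref{usedFF2NF} on a thin shell $A=B_{(2-\lambda)R}\setminus\overline B_{(1+\lambda)R}$, for some fixed $\lambda\in(0,1/2)$ and with smoothness index $r$ large enough that $H^r(A)\hookrightarrow C^1(\overline A)$, yields
\[
\sup_{A}\bigl(\abs{w}+\abs{\nabla w}\bigr)\leq \delta := C\mathcal S\exp\bigl(-c\sqrt{\ln(\mathcal S/\varepsilon)}\bigr)
\]
for constants $c,C>0$ depending only on $k,R,\lambda$. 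Next, Lemma~\ref{HolderRegularity} applied separately to $u$ and $u'$, combined with the bounds $\norm{u}_{H^2(B_{2R})},\norm{u'}_{H^2(B_{2R})}\leq\mathcal I+\mathcal S$, places both of them in $C^{1,1/2}(\overline B_{3R/2})$ with norm at most $\mathcal T:=C_{k,R}(1+\mathcal M)(\mathcal I+\mathcal S)$. This already supplies the asserted continuity of $u-u'$ and $\nabla u-\nabla u'$ in $B_R$, and puts $w$ and each component of $\nabla w$ into $C^{1/2}(\overline B_{3R/2})$ with norm at most $\mathcal T$. Since $V$ and $V'$ vanish outside $Q$, the function $w$ satisfies $(\Delta+k^2)w=0$ in $B_{2R}\setminus Q$, and differentiating the equation shows that each component of $\nabla w$ does so as well; a companion elliptic regularity estimate on a slightly larger ball ensures the required $L^\infty(B_{2R})$ bound for $\nabla w$.

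I would then invoke Proposition~\ref{prop2scatterer} (with $\alpha=1/2$) once on $w$ and once on each component of $\nabla w$, using the shell smallness $\delta$ just produced and choosing $\varepsilon_m$ small enough that the threshold~\eqref{deltaMax} is met. Evaluating the resulting bound at points of $\partial Q\subset\overline B_{3R/2}$ (where $d(x,\partial Q)=0$ trivially) gives
\[
\sup_{\partial Q}\bigl(\abs{w}+\abs{\nabla w}\bigr)\leq \frac{C_{k,R}}{(\ln\abs{\ln\delta})^{1/2}}\,\mathcal T.
\]
Substituting the definition of $\delta$ yields $\abs{\ln\delta}\asymp\sqrt{\ln(\mathcal S/\varepsilon)}$ and $\ln\abs{\ln\delta}\asymp\tfrac12\ln\ln(\mathcal S/\varepsilon)$, reproducing the advertised $\bigl(\ln\ln(\mathcal S/\varepsilon)\bigr)^{-1/2}$ rate with the prefactor $\mathcal C$ of the stated form. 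I expect the delicate step, and the source of the double logarithm, to be this very last one: because the scatterers are penetrable we have neither a boundary condition for $w$ on $\partial Q$ nor real-analyticity of $w$ there, so Proposition~\ref{prop2scatterer} must cross $\partial Q$ using only H\"older continuity, and balancing that H\"older loss against the three-spheres chain-of-balls propagation inside $B_{2R}\setminus Q$ is exactly what forces $(\ln\abs{\ln\delta})^{-1/2}$ rather than a single logarithm.
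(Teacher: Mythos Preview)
Your proposal is correct and follows essentially the same route as the paper: apply Corollary~\ref{usedFF2NF} to $w=u-u'$ on an annulus to get near-field smallness $\delta$, invoke Lemma~\ref{HolderRegularity} to place $u,u'$ in $C^{1,1/2}(\overline B_{3R/2})$, then apply Proposition~\ref{prop2scatterer} to $w$ and to each $\partial_j w$ and substitute the form of $\delta$. The paper is slightly terser about why the max in Corollary~\ref{usedFF2NF} reduces to the exponential term (it imposes $\varepsilon_m\leq\mathcal S e^{-c^2}$) and about the $L^\infty(B_{2R})$ bound on $\nabla w$, but your outline matches its structure step for step.
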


\begin{proof}
  Firstly, propagate smallness from the far-field to the near-field by
  using Corollary~\ref{usedFF2NF}. Let $w^s$ in that proposition be
  $u-u' = u^s - u'^s$ and denote $\varepsilon = \norm{u^s_\infty -
    u'^s_\infty}_{L^2(\mathbb S^{n-1})}$. Note also that
  $\norm{w^s}_{H^2(B_{2R})} \leq 2 \mathcal S$ then. Choose the
  annulus $A = B_{(2-\lambda)R} \setminus \overline B_{(1+\lambda)R}$
  for some positive $\lambda < \frac12$. Corollary~\ref{usedFF2NF}
  implies that $w^s\in H^r(A)$ for any $r\in\N$. Moreover in two and
  three dimensions Sobolev embedding implies that
  $H^2(A)\hookrightarrow L^\infty(A)$. Hence the estimate given by the
  corollary becomes
  \[
  \norm{w^s}_{L^\infty(A)}, \norm{\nabla w^s}_{L^\infty(A)} \leq C
  \max \left( \varepsilon, \mathcal S e^{-c \sqrt{\ln(\mathcal S/
      \varepsilon)}} \right)
  \]
  for $C>1, c>0$ depending on $k,R,\lambda$ (we estimated
  $\ln(2\mathcal S/\varepsilon) \geq \ln(\mathcal
  S/\varepsilon)$). Our first requirement on $\varepsilon_m$ is that
  the maximum picks the number on the right side. This happens if
  $\varepsilon \leq \mathcal S e^{-c^2}$ so let us require
  $\varepsilon_m \leq \mathcal S e^{-c^2}$.
  
  The second step is to use the propagation of smallness by
  Proposition~\ref{prop2scatterer} for $w = w^s$ and also for $w =
  \partial_j w^s$, $j=1,\ldots,n$. By Lemma~\ref{HolderRegularity} we
  have
  \[
  \norm{u}_{C^{1,1/2}} \leq C (1+\mathcal M)(\mathcal I +\mathcal S)
  \]
  in $\overline B_{3R/2}$ and similarly for $u'$. So $w^s, \partial_j
  w^s \in C^{1/2}$ for each $j$. Thus the smoothness requirements of
  Proposition~\ref{prop2scatterer} are satisfied for each choice of
  $w$. Also $C=C(k,R)$.  Set
  \[
  \delta = C \mathcal S e^{-c\sqrt{\ln(\mathcal S/\varepsilon)}}.
  \]
  We get a second upper bound on $\varepsilon_m$ by requiring that
  $\delta$ satisfies \eqref{deltaMax}. The right-hand side in that
  inequality depends only on $A = A(\lambda,R)$, $k$ and $R$, so this
  second, updated, upper bound for $\varepsilon_m$ still only depends
  on $\lambda,k,R$. Now Proposition~\ref{prop2scatterer} implies
  \[
  \abs{w(x)} \leq C(1+\mathcal M) (\mathcal I + \mathcal S)
  (\ln\abs{\ln\delta})^{-1/2}
  \]
  with $C=C(\lambda,k,R)$. The choice of $\delta$ implies
  \[
  \abs{\ln\delta} = c \sqrt{\ln(\mathcal S \varepsilon^{-1})} -
  \ln(C\mathcal S) \geq \frac{c}{2} \sqrt{\ln(\mathcal S
    \varepsilon^{-1})} \geq \big(\ln(\mathcal S
  \varepsilon^{-1})\big)^{1/4}
  \]
  if $\varepsilon_m$ is small enough (and again $c, C$ depend only on
  $k,\lambda,R$). Thus
  \[
  (\ln\abs{\ln\delta})^{-1/2} \leq \Big(\ln \big( \ln(\mathcal S
  \varepsilon^{-1})\big)^{1/4} \Big)^{-1/2} = 2 \big(\ln \ln (\mathcal
  S \varepsilon^{-1}) \big)^{-1/2}
  \]
  and the claim follows after choosing $\lambda$ as a function of $R$
  for example.
\end{proof}

\section{From boundary to inside} \label{sect:orthogonality}

We deal with particulars related to corner scattering in this section.
More precisely, we prove the fundamental orthogonality identity which
is the foundation upon which past results \cite{BPS, PSV, HSV} were
built on. Since we are proving stability instead of uniqueness we have
an extra boundary term here to deal with.  Moreover, for future
convenience, we do not assume that $u^i(x_c) \neq 0$ in
Proposition~\ref{intByPartsEstim}. This does not complicate the
argument by much.

\begin{proposition}\label{intByParts}
  Let $Q_h \subset \R^n$ be a bounded Lipschitz domain, $V\in
  L^\infty(Q_h)$, $k>0$ and $u^i,u,u_0 \in H^2(Q_h)$ satisfy
  \begin{align*}
  &(\Delta+k^2)u^i=0,\\ &(\Delta+k^2(1+V))u=0,\\ &(\Delta+k^2(1+V))u_0=0
  \end{align*}  
  in $Q_h$. Then
  \begin{equation}
    k^2 \int_{Q_h} V\,u_0\,u^i\,dx = \int_{\partial Q_h} \big(u_0\,
    \partial_\nu (u^i-u) - (u^i-u)\, \partial_\nu u_0\big) d\sigma.
  \end{equation}
\end{proposition}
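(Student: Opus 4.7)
The plan is to apply Green's second identity to the pair $u_0$ and $w := u^i - u$ on the Lipschitz domain $Q_h$, then use the three given PDEs to simplify the resulting bulk integrand down to $k^2 V u_0 u^i$.

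Concretely, since $u, u^i, u_0 \in H^2(Q_h)$ and $Q_h$ has Lipschitz boundary, the standard $H^2$ version of Green's second identity (with traces and normal traces interpreted in $H^{1/2}(\partial Q_h)$ and $H^{-1/2}(\partial Q_h)$) gives
\begin{equation*}
  \int_{Q_h} \bigl( u_0 \Delta w - w \Delta u_0 \bigr)\, dx
  = \int_{\partial Q_h} \bigl( u_0\, \partial_\nu w - w\, \partial_\nu u_0 \bigr)\, d\sigma.
\end{equation*}
The boundary integral on the right is already the one in the claim once we substitute $w = u^i - u$, so the only work is to identify the bulk integrand.

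From the hypotheses we read off $\Delta u^i = -k^2 u^i$, $\Delta u = -k^2(1+V) u$ and $\Delta u_0 = -k^2(1+V) u_0$, so
\begin{equation*}
  \Delta w = \Delta u^i - \Delta u = -k^2 u^i + k^2(1+V) u = -k^2 w + k^2 V u.
\end{equation*}
Substituting and expanding,
\begin{align*}
  u_0 \Delta w - w \Delta u_0
  &= u_0\bigl( -k^2 w + k^2 V u \bigr) - w\bigl( -k^2(1+V) u_0 \bigr) \\
  &= -k^2 u_0 w + k^2 V u_0 u + k^2 u_0 w + k^2 V u_0 w \\
  &= k^2 V u_0 ( u + w ) = k^2 V u_0 u^i,
\end{align*}
since $u + w = u + (u^i - u) = u^i$. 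The Helmholtz-type $k^2 w$ terms cancel and the two $k^2 V u_0$ contributions combine to reconstitute $u^i$; this is really the only computation in the proof.

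I do not expect a serious obstacle here: the identity is an $H^2$ Green-formula calculation, and the only thing to be slightly careful about is invoking Green's second identity in the form valid for $H^2$ functions on a bounded Lipschitz domain (so that the boundary trace of the normal derivative makes sense). Everything else is a direct algebraic substitution using the three PDEs and the definition $w = u^i - u$.
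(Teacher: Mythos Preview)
Your proof is correct and is essentially the same as the paper's: both apply Green's second identity to $u_0$ and $u^i-u$ on $Q_h$ and use the three PDEs to reduce the bulk term to $k^2 V u_0 u^i$. The paper compresses the algebra by first noting $(\Delta + k^2(1+V))(u^i-u) = k^2 V u^i$ and then using that $u_0$ is in the kernel of the same operator, but this is the same computation you carried out line by line.
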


\begin{proof}
  Use Green's formula after noting that
  \[
  k^2 \int_{Q_h} V\,u_0\,u^i\,dx = \int_{Q_h} u_0 \big( \Delta +
  k^2(1+V)\big) (u^i-u)\,dx.
  \]
\end{proof}

We consider only incident waves that do not vanish anywhere in this
paper. This means that in the following corollary we would always have
$P_N$ a constant and $N=0$. The corollary is stated so that it applies
also to the more general case where the incident wave can vanish up to
a finite order $N$ at $x_c$. This is for the convenience of future
papers on the topic and also since the proof is not substantially more
difficult in this case.

\begin{proposition} \label{intByPartsEstim}
  Let $\mathfrak P, \mathfrak Q \subset \R^n$ be open polyhedral cones
  with vertex $x_c$ such that $\mathfrak P \subset \mathfrak Q$ and
  their boundaries are a subset of the union of at most $\mathcal V$
  hyperplanes of codimention 1. Let $P_h = \mathfrak P \cap B(x_c,h)$
  and $Q_h = \mathfrak Q \cap B(x_c,h)$ for $0<h\leq1$.
  
  Let $k>0$ and $V,V' \in L^\infty(\R^n)$ be supported in $B_R\supset
  Q_h$ for some $R>1$. Assume that $V = \chi_{\mathfrak P} \varphi$
  and $V'=0$ in $Q_h$ for some measurable function $\varphi:P_h \to
  \C$. Let $u, u', u_0 \in H^2(B_{2R})$ satisfy
  \[
  (\Delta+k^2(1+V))u=(\Delta+k^2(1+V))u_0=0, \quad
  (\Delta+k^2(1+V'))u'=0.
  \]
  If we have
  functions $P_N$, $\varphi_\alpha$, $u'_{N+1}$, $\psi$ and a complex
  vector $\rho\in\C^n$ such that
  \begin{align*}
    &\varphi(x) = \varphi(x_c) + \varphi_\alpha(x),\\ &u'(x) =
    P_N(x-x_c) + u'_{N+1}(x),\\ &u_0(x) =
    e^{\rho\cdot(x-x_c)}(1+\psi(x)),
  \end{align*}  
  in $P_h$, then
  \begin{align}
    & \varphi(x_c)\int_{\mathfrak P} e^{\rho\cdot(x-x_c)} P_N(x-x_c)
    dx = \varphi(x_c)\int_{\mathfrak P \setminus P_h}
    e^{\rho\cdot(x-x_c)} P_N(x-x_c) dx \notag\\ &\qquad - \int_{P_h}
    e^{\rho\cdot(x-x_c)} \varphi_\alpha(x) P_N(x-x_c) dx - \int_{P_h}
    e^{\rho\cdot(x-x_c)} V(x) u'_{N+1}(x) dx \notag\\ &\qquad -
    \int_{P_h} e^{\rho\cdot(x-x_c)} V(x) u'(x) \psi(x) dx +
    \frac{1}{k^2} \int_{\partial Q_h} \big( u_0 \partial_\nu (u-u') -
    (u-u') \partial_\nu u_0 \big) d\sigma
  \label{finalOrthEquality}.
  \end{align}

  Assume moreover that $\psi\in L^p$ in $Q_h$, $p>1$, and that
  \begin{enumerate}
    \item $\abs{\rho}\geq1$ and $\Re\rho\cdot(x-x_c) \leq - \delta_0
      \abs{x-x_c}\abs{\Re\rho}$ for some $\delta_0>0$ and any $x\in
      Q_h$,
    \item $\abs{\varphi_\alpha(x)} \leq \mathcal M\abs{x-x_c}^\alpha$,
      $\abs{V(x)} \leq \mathcal M$ for $x\in P_h$, and some $\alpha>0$
    \item $\abs{u'(x)} \leq \mathcal F\abs{x-x_c}^N$ for $x\in P_h$,
    \item $\abs{P_N(x-x_c)} \leq \mathcal P\abs{x-x_c}^N$ for $x\in
      P_h$,
    \item $\abs{u'_{N+1}(x)} \leq \mathcal R \abs{x-x_c}^{N+1}$ for
      $x\in P_h$,
  \end{enumerate}
  with $0\leq N \leq \mathcal N$ then we have the norm estimate
  \begin{align}
  &C\abs{\varphi(x_c) \int_{\mathfrak P} e^{\rho\cdot(x-x_c)}
      P_N(x-x_c) dx} \leq \abs{\Re\rho}^{-N-n}
    e^{-\delta_0\abs{\Re\rho}h/2} \notag\\ &\qquad\quad +
    \abs{\Re\rho}^{-N-n-\min(1,\alpha)} + \abs{\Re\rho}^{-N-n/p'}
    \norm{\psi}_{L^p(P_h)} \notag\\ &\qquad \quad + h^{(n-1)/2}
    \abs{\rho} (1+\norm{\psi}_{H^2(B_{2R})}) \sup_{\partial \mathfrak
      Q\cap B(x_c,h)} \{\abs{u-u'}, \abs{\nabla u-\nabla u'}\}
    \notag\\ &\qquad \quad + h^{n/2-1} e^{-\delta_0\abs{\Re\rho}h}
    \abs{\rho} (1+\norm{\psi}_{H^2(B_{2R})})
    (\norm{u}_{H^2(B_{2R})}+\norm{u'}_{H^2(B_{2R})})\label{integralEstims}
  \end{align}
  where $1/p+1/p'=1$ and $C>0$ depends on all the a-priori parameters
  $\mathcal V, k, \mathcal P, \mathcal M, \mathcal N, \mathcal R,
  \mathcal F, \alpha, \delta_0, n, p$.
\end{proposition}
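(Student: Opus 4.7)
The plan is to derive (\ref{finalOrthEquality}) from Proposition~\ref{intByParts} by algebraic rearrangement, and then to estimate each of its five pieces one by one using the exponential decay of $e^{\rho\cdot(x-x_c)}$ in $Q_h$ together with a scaling change of variables. For the identity itself, since $V'\equiv 0$ on $Q_h$ the function $u'$ solves $(\Delta+k^2)u'=0$ there, so Green's formula applied to $u_0$ and $u-u'$ gives
\[
k^2\int_{P_h} V u_0 u'\, dx = -\int_{\partial Q_h}\bigl(u_0\partial_\nu(u-u') - (u-u')\partial_\nu u_0\bigr) d\sigma,
\]
which is precisely the argument of Proposition~\ref{intByParts} applied with $u^i$ replaced by $u'$. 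Expanding $V u_0 u'$ via the three splittings breaks the integrand into four pieces
\[
\varphi(x_c)e^{\rho\cdot(x-x_c)}P_N + \varphi_\alpha e^{\rho\cdot(x-x_c)}P_N + V e^{\rho\cdot(x-x_c)}u'_{N+1} + V e^{\rho\cdot(x-x_c)}\psi u',
\]
and isolating the principal piece via $\int_{P_h}\varphi(x_c)e^{\rho\cdot(x-x_c)}P_N\,dx = \varphi(x_c)\int_{\mathfrak P}(\cdots)\,dx - \varphi(x_c)\int_{\mathfrak P\setminus P_h}(\cdots)\,dx$ and rearranging yields (\ref{finalOrthEquality}).

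For the bulk estimates my main tool is the pointwise bound $\abs{e^{\rho\cdot(x-x_c)}}\leq e^{-\delta_0\abs{\Re\rho}\abs{x-x_c}}$ from hypothesis~(1) together with the scaling substitution $y = \abs{\Re\rho}(x-x_c)$. The $\varphi_\alpha P_N$ and $V u'_{N+1}$ integrands are pointwise $\lesssim\abs{x-x_c}^{N+\alpha}$ and $\lesssim\abs{x-x_c}^{N+1}$, respectively, and the substitution converts each integral into $\abs{\Re\rho}^{-n-N-\alpha}$ or $\abs{\Re\rho}^{-n-N-1}$ times a finite universal constant; both are dominated by $\abs{\Re\rho}^{-n-N-\min(1,\alpha)}$ since $\abs{\Re\rho}\geq 1$. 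The tail integral instead runs over $\abs{y}\geq \abs{\Re\rho}h$, and factoring $e^{-\delta_0\abs{y}} = e^{-\delta_0\abs{\Re\rho}h/2}\cdot e^{-\delta_0(\abs{y}-\abs{\Re\rho}h/2)}$ extracts the $e^{-\delta_0\abs{\Re\rho}h/2}$ factor while leaving an integrand that is absolutely integrable on all of $\R^n$. The $\psi$ term is handled by H\"older's inequality with conjugate exponents $p, p'$, which separates $\norm{\psi}_{L^p(P_h)}$ from a weighted integral whose scaling yields $\abs{\Re\rho}^{-N-n/p'}$.

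The main obstacle will be the boundary integral, which I split into the lateral faces $\partial\mathfrak Q \cap B(x_c,h)$ and the spherical cap $\mathfrak Q \cap \partial B(x_c,h)$. On the lateral part $\abs{e^{\rho\cdot(x-x_c)}}\leq 1$, so I bound the integral by the hypothesis supremum of $u-u'$ and $\nabla(u-u')$ times $\int_{\partial\mathfrak Q\cap B(x_c,h)}(\abs{u_0}+\abs{\nabla u_0})\,d\sigma$. The $1$- and $\rho(1+\psi)$-parts of the latter contribute at most $\abs{\rho}\mathcal V h^{n-1}$, while the $\psi,\nabla\psi$ parts contribute $\abs{\rho}h^{(n-1)/2}\norm{\psi}_{H^2(B_{2R})}$ via Cauchy-Schwarz and the $H^{1/2}$ trace inequality $\norm{\psi}_{L^2(\partial\mathfrak Q\cap B_{2R})}\lesssim\norm{\psi}_{H^2(B_{2R})}$; since $h\leq 1$ and $\abs{\rho}\geq 1$ these combine into $h^{(n-1)/2}\abs{\rho}(1+\norm{\psi}_{H^2})$ times the supremum. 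On the spherical cap $\abs{e^{\rho\cdot(x-x_c)}}\leq e^{-\delta_0\abs{\Re\rho}h}$ pointwise; $u$, $u'$, and their gradients are bounded pointwise by $\norm{u}_{H^2}+\norm{u'}_{H^2}$ via Sobolev embedding in dimensions $n\in\{2,3\}$ together with Lemma~\ref{HolderRegularity}. The scaled trace inequality $\norm{\psi}_{L^2(\partial B(x_c,h))}\lesssim h^{-1/2}\norm{\psi}_{H^2(B_{2R})}$ combined with the area factor $h^{(n-1)/2}$ produces the prefactor $h^{n/2-1}$, and $\partial_\nu u_0$ contributes the extra $\abs{\rho}$. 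Careful bookkeeping of the $h$- and $\abs{\rho}$-powers so that they match the five stated terms, and handling the worst-behaved trace of $\psi$ near $x_c$, is the delicate part.
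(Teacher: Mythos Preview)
Your proposal is correct and follows essentially the same route as the paper's proof: the identity via Proposition~\ref{intByParts} with $u'$ in the role of the incident wave, the same four-piece expansion of $Vu_0u'$, the same radial/scaling estimate for the bulk integrals (the paper phrases these via the incomplete gamma functions $\gamma(s,x)$ and $\Gamma(s,x)$ rather than the substitution $y=\abs{\Re\rho}(x-x_c)$, but the content is identical), and the same lateral/spherical-cap split of $\partial Q_h$ handled by Cauchy--Schwarz, the $\mathcal V$-hyperplane trace bound, Lemma~\ref{HolderRegularity}, and the scaled trace inequality on $S(x_c,h)$. The only cosmetic difference is that the paper applies Cauchy--Schwarz to the full boundary integrand at once, whereas you first pull out $\sup\{\abs{u-u'},\abs{\nabla(u-u')}\}$ and then integrate $\abs{u_0}+\abs{\nabla u_0}$; both yield the stated bound.
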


\begin{proof}
  The integral identity is a direct calculation using
  Proposition~\ref{intByParts} with $Q_h$ and $u^i=u'$, and then
  noting that $V = 0$ on $Q_h \setminus P_h$. For the others we use
  the incomplete gamma functions $\gamma, \Gamma: \R_+\times\R_+ \to
  \R$
  \[
  \gamma(s,x) = \int_0^x e^{-t} t^{s-1} dt, \qquad \Gamma(s,x) =
  \int_x^\infty e^{-t} t^{s-1} dt
  \]
  which satisfy $\gamma(s,x) \leq \Gamma(s) \leq \lceil s-1 \rceil!$
  and $\Gamma(s,x) \leq 2^s \Gamma(s) e^{-x/2}$, where $\Gamma(s)$
  represents the ordinary, complete, gamma function. The latter
  estimate follows from splitting $e^{-t} \leq e^{-t/2} e^{-x/2}$ in
  the integral, expanding the integration limits to ${({0,\infty})}$
  and switching to the integration variable $t' = t/2$. By a radial
  change of coordinates the first integral on the right has the upper
  bound
  \begin{align*}
    &\abs{\int_{\mathfrak P\setminus P_h}
      e^{\rho\cdot(x-x_c)}P_N(x-x_c) dx} \leq \int_{\mathfrak
      P\setminus P_h} e^{-\delta_0\abs{\Re\rho}\abs{x-x_c}} \mathcal P
    \abs{x-x_c}^N dx \\ &\qquad \leq \mathcal P \sigma(\mathbb
    S^{n-1}) \int_h^\infty e^{-\delta_0\abs{\Re\rho} r} r^{N+n-1} dr
    \\ &\qquad \leq \left(\frac{2}{\delta_0}\right)^{N+n} (N+n)!\,
    \mathcal P\, \sigma(\mathbb S^{n-1}) \abs{\Re\rho}^{-N-n}
    e^{-\delta_0 \abs{\Re\rho} h /2} \\ &\qquad \leq C_{\delta_0,
      \mathcal N, n, \mathcal P} \abs{\Re\rho}^{-N-n} e^{-\delta_0
      \abs{\Re\rho} h /2}
  \end{align*}
  for the first integral on the right.

  For the integral inside $P_h$ note
  \begin{align*}
    &\int_{P_h} e^{\Re\rho\cdot(x-x_c)q'} \abs{x-x_c}^{Bq'} dx =
    \int_{\mathbb S^{n-1} \cap (P_h-x_c)} \int_0^h e^{-\delta_0 q'
      \abs{\Re\rho} r} r^{Bq'+n-1} dr d\sigma(\theta) \\ &\qquad \leq
    \sigma(\mathbb S^{n-1}) \int_0^{\delta_0 q' \abs{\Re\rho} h}
    e^{-r'} {r'}^{Bq'+n-1} \frac{dr'}{(\delta_0 q'
      \abs{\Re\rho})^{Bq'+n}} \\ &\qquad = \sigma(\mathbb S^{n-1})
    \gamma( Bq'+n, \delta_0 q' \abs{\Re\rho} h ) (\delta_0 q'
    \abs{\Re\rho})^{-Bq'-n}.
  \end{align*}
  Use this to prove the following estimate, each of which shall be applied to
  the next three integrals in \eqref{finalOrthEquality}. Let $f,g$ be
  functions such that $\abs{f(x)} \leq A\abs{x-x_c}^B$ with
  $A\leq\mathcal A$, $B\leq\mathcal B$, and $g \in L^q$. Then
  \begin{align*}
    &\abs{\int_{P_h} e^{\rho\cdot(x-x_c)} f(x)\, g(x)\, dx} \leq A
    \left(\int_{P_h} e^{\Re\rho\cdot(x-x_c)q'} \abs{x-x_c}^{Bq'} dx
    \right)^{1/q'} \norm{g}_{L^q(P_h)} \\ &\quad \leq A \left(\frac{
      \sigma(\mathbb S^{n-1}) \gamma(Bq'+n, \delta_0q'\abs{\Re\rho}h)
    }{(\delta_0 q' \abs{\Re\rho})^{Bq'+n}}\right)^{1/q'}
    \norm{g}_{L^q(P_h)} \\ &\qquad \leq A \left( \frac{\sigma(\mathbb
      S^{n-1}) \lceil Bq'+n \rceil! }{ (\delta_0 q'
      \abs{\Re\rho})^{Bq'+n} } \right)^{1/q'} \norm{g}_{L^q(P_h)}
    \\ &\qquad \leq C_{\mathcal A, \mathcal B, n, \delta_0, q}
    \abs{\Re\rho}^{-B-n/q'} \norm{g}_{L^q(P_h)}
  \end{align*}
  where $1/q+1/q'=1$. Choosing
  \begin{itemize}
    \item $q=\infty$, $\mathcal A = \mathcal P \mathcal M$,
      $B=N+\alpha \leq \mathcal N+\alpha$,
    \item $q=\infty$, $\mathcal A = \mathcal M \mathcal R$, $B=N+1
      \leq \mathcal N+1$, and
    \item $q=p$, $\mathcal A = \mathcal M \mathcal F$, $B=N \leq
      \mathcal N$
  \end{itemize}
  gives the three estimates
  \begin{align*}
    &\abs{\int_{P_h} e^{\rho\cdot(x-x_c)} \varphi_\alpha(x)\,
      P_N(x-x_c)\, dx} \leq C_{\mathcal P, \mathcal M, \mathcal N,
      \alpha, n, \delta_0}
    \abs{\Re\rho}^{-N-n-\alpha},\\ &\abs{\int_{P_h}
      e^{\rho\cdot(x-x_c)} V(x)\, u'_{N+1}(x)\, dx} \leq C_{\mathcal
      M, \mathcal R, \mathcal N, n, \delta_0}
    \abs{\Re\rho}^{-N-n-1},\\ &\abs{\int_{P_h} e^{\rho\cdot(x-x_c)}
      V(x)\, u'(x)\, \psi(x)\, dx} \leq C_{\mathcal M, \mathcal F,
      \mathcal N, n, \delta_0, p} \abs{\Re\rho}^{-N-n/p'}
    \norm{\psi}_{L^p(P_h)}.
  \end{align*}

  \medskip
  Only the boundary integral is left in \eqref{finalOrthEquality}. Let
  us split the boundary into two pieces: $\partial Q_h = (\partial
  \mathfrak Q \cap B(x_c,h)) \cup (\mathfrak Q\cap S(x_c,h))$.  For
  the first piece use the Cauchy-Schwartz inequality which gives
  \begin{align*}
    &\abs{\int_{\partial \mathfrak Q\cap B(x_c,h)} \big( u_0\,
      \partial_\nu (u-u') - (u-u')\, \partial_\nu u_0\big)\,
      d\sigma(x)} \leq \sqrt{\sigma(\partial \mathfrak Q\cap
      B(x_c,h))} \, \cdot \\ &\qquad \cdot \big((1 + \abs{\rho})(1 +
    \norm{\psi}_{L^2(\partial \mathfrak Q\cap B(x_c,h))}) +
    \norm{\partial_\nu \psi}_{L^2(\partial \mathfrak Q\cap B(x_c,h))}
    \big) \norm{u-u'}_{NF}
  \end{align*}
  where $\norm{f}_{NF}$ denotes the maximum of $\abs{f}$ and
  $\abs{\nabla f}$ on $\partial \mathfrak Q\cap B(x_c,h)$. This
  estimate uses $\abs{\exp(\rho\cdot(x-x_c))} \leq 1$ in $\mathfrak
  Q$.

  Both $\norm{\psi}_2$ and $\norm{\partial_\nu \psi}_2$ can be
  estimated by $C_{\mathcal V,n} \norm{\psi}_{H^2(B_{2R})}$ in the set
  $\partial \mathfrak Q\cap B(x_c,h)$ since $h\leq1$ and so
  $B(x_c,h)\subset B_{2R}$. The constant depends on $\mathcal V$
  instead of $\mathfrak Q$ because
  \[
  \partial \mathfrak Q \subset \bigcap_{j=1}^{\mathcal V} \partial H_j
  \cap B(x_c,1)
  \]
  for some half-spaces $H_j$ that pass through $x_c$. The trace norm
  is identical in each of the sets $H_j \cap B(x_c,1)$. By an easier
  argument we see that $\sqrt{\sigma(\partial \mathfrak Q\cap
    B(x_c,h))} \leq C_{\mathcal V,n} h^{(n-1)/2}$ and the estimate for
  the first part of the boundary term in \eqref{finalOrthEquality}
  follows.

  For estimating the last integral, the one over $\mathfrak Q \cap
  S(x_c,h)$, the Cauchy-Schwartz inequality gives
  \begin{align*}
    &\abs{ \int_{\mathfrak Q\cap S(x_c,h)} \big( u_0\, \partial_\nu
      (u-u') - (u-u')\, \partial_\nu u_0\big)\, d\sigma(x)}
    \\ &\qquad \leq \sqrt{\sigma(\mathfrak Q \cap S(x_c,h))}
    \norm{u-u'}_{C^1(\overline B(x_c,h))} e^{-\delta_0\abs{\Re\rho}h}
    \cdot \\ &\qquad\quad \cdot \big( (1+\abs{\rho})
    (1+\norm{\psi}_{L^2(\mathfrak Q\cap S(x_c,h))}) +
    \norm{\partial_\nu \psi}_{L^2(\mathfrak Q\cap S(x_c,h))} \big).
  \end{align*}
  We can estimate by $C^1$-norm by Lemma~\ref{HolderRegularity} which
  gives $\norm{u-u'}_{C^{1,1/2}}\leq C (1+\mathcal M) (\norm{u}_2
  +\norm{u'}_2)$ where the $\norm{\cdot}_2$-norm is the
  $H^2(B_{2R})$-norm.

  For estimating $\psi$ let us consider how the trace-norm depends on
  $h$ when the trace-operator maps $H^1(B(x_c,h)) \to
  L^2(S(x_c,h))$. We do this by scaling the variables, for example by
  having $g(y) = f(h(y-x_c)+x_c)$ and $f(x) = g((x-x_c)/h+x_c)$. Now
  \begin{align*}
    &\norm{f}_{L^2(S(x_c,h))} = h^{\frac{n-1}{2}}
    \norm{g}_{L^2(S(x_c,1))} \leq C h^{\frac{n-1}{2}}
    \norm{g}_{H^1(B(x_c,1))} \\ &\qquad \leq C h^{-\frac{1}{2}} (1+h)
    \norm{f}_{H^1(B(x_c,h))} \leq C h^{-\frac{1}{2}}
    \norm{f}_{H^1(B(x_c,h))}
  \end{align*}
  because of $h\leq1$. Hence we see that $\norm{\psi}_2$ and
  $\norm{\partial_\nu \psi}_2$ can be estimated by $C_n h^{-1/2}
  \norm{\psi}_{H^2(B_{2R})}$ in $L^2(\mathfrak Q\cap
  S(x_c,h))$. However note that
  \[
  \sqrt{\sigma(\mathfrak Q \cap S(x_c,h))} \leq C h^{(n-1)/2}
  \]
  so the final estimate \eqref{integralEstims} follows.
\end{proof}

To prove the final stability results, we need a lower bound on the
left-hand side of \eqref{integralEstims}. This is nontrivial. In
previous papers \cite{BPS}, \cite{PSV} it is shown that the left-hand
side does not vanish. We do need a quantitative version, for example
of the form: given a polynomial $P_N$ satisfying some a-priori
conditions, the left hand side is greater than $C$ which does not
depend on $P_N$. This turns out to require a too fine analysis in the
context of support probing. However we can avoid this because we
assumed that $u'(x_c) \neq 0$, which implies that $P_N(x) \equiv
u'(x_c)$ is constant.

\begin{lemma}\label{lowerBound}
  Let $n\in\{2,3\}$, $0<2\alpha_m<2\alpha_M<2\alpha'<\pi$ and
  $k>0$. For $\mathcal Q, \mathfrak P \subset \R^n$ we say $(\mathcal
  Q, \mathfrak P) \in \mathscr G(\alpha_m, \alpha_M, \alpha', n)$ if
  the following are satisfied
  \begin{enumerate}
    \item $\mathcal Q$ is an open spherical cone,
    \item $\mathfrak P$ is an open convex polyhedral cone,
    \item $\mathcal Q$ and $\mathfrak P$ have a common vertex $x_c \in
      \R^n$,
    \item $\mathfrak P \subset \mathcal Q$,
    \item $\mathcal Q$ has opening angle at most $2\alpha'$,
    \item in 2D $\mathfrak P$ has opening angle in ${]{2\alpha_m,
      2\alpha_M}[}$,
    \item in 3D $\mathfrak P$ can be transformed to ${]{0,\infty}[}^3$
      by a rigid motion.
  \end{enumerate}

  If $(\mathcal Q, \mathfrak P) \in \mathscr G(\alpha_m, \alpha_M,
  \alpha', n)$, then there is $\tau_0 = k\,C(\alpha_m, \alpha_M,
  \alpha', n) > 0$, and $c = c(\alpha_m, \alpha_M, n)>0$ with the
  following properties. There is a curve
  $\tau\mapsto\rho(\tau)\in\C^n$ (which depends on $\mathcal Q$)
  satisfying $\rho(\tau)\cdot\rho(\tau)+k^2=0$,
  $\tau=\abs{\Re\rho(\tau)}$,
  \[
  \Re\rho(\tau)\cdot(x-x_c) \leq -\cos\alpha' \abs{\Re\rho(\tau)}
  \abs{x-x_c}
  \]
  for all $x\in \mathcal Q$ and such that if $\tau\geq\tau_0$ then
  \[
  \abs{\int_{\mathfrak P} e^{\rho(\tau)\cdot(x-x_c)} dx} \geq c
  \tau^{-n}.
  \]
\end{lemma}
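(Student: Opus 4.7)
\emph{Construction of $\rho$.} My plan is to take a CGO-type complex direction whose real part is aligned with the axis of the containing cone $\mathcal Q$. Let $\eta\in\mathbb S^{n-1}$ be the axis direction of the spherical cone $\mathcal Q$, pick any unit vector $\zeta\perp\eta$, and set
\[
\rho(\tau) := -\tau\,\eta + \rmi\sqrt{\tau^2+k^2}\,\zeta.
\]
A direct computation gives $\rho(\tau)\cdot\rho(\tau) = \tau^2-(\tau^2+k^2) = -k^2$ and $\abs{\Re\rho(\tau)}=\tau$. Since $\mathcal Q$ has half-opening angle $\alpha_{\mathcal Q}\leq\alpha'$, every unit direction $\omega$ pointing into $\overline{\mathcal Q}$ from $x_c$ satisfies $\eta\cdot\omega\geq\cos\alpha_{\mathcal Q}\geq\cos\alpha'$, which immediately yields the decay condition in the lemma: for $x\in\mathcal Q$,
\[
\Re\rho(\tau)\cdot(x-x_c) = -\tau\,\eta\cdot(x-x_c) \leq -\tau\cos\alpha'\,\abs{x-x_c}.
\]

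\emph{Closed-form evaluation.} The second step is to parametrize $\mathfrak P$ by its extreme rays, reducing the integral to a product of one-dimensional Laplace transforms. In $n=3$, the rigid-motion hypothesis gives an orthonormal triple of unit edge vectors $v_1,v_2,v_3$ with $\mathfrak P = x_c+\{s_1 v_1+s_2 v_2+s_3 v_3 : s_j>0\}$ and Jacobian $1$. In $n=2$, $\mathfrak P$ is a convex wedge with two unit edges $v_1,v_2$ subtending angle $2\beta\in(2\alpha_m,2\alpha_M)$, parametrized by $x-x_c=s_1 v_1+s_2 v_2$ with Jacobian $\sin 2\beta\geq\min(\sin 2\alpha_m,\sin 2\alpha_M)=:c_0(\alpha_m,\alpha_M)>0$. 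Each edge direction $v_j$ lies in $\overline{\mathcal Q}$, so by the decay condition $\Re(\rho(\tau)\cdot v_j) \leq -\tau\cos\alpha' <0$ and the integral factorizes:
\[
\int_{\mathfrak P} \rme^{\rho(\tau)\cdot(x-x_c)}\,dx = \frac{(-1)^n\,|J|}{\prod_{j=1}^n \rho(\tau)\cdot v_j},\qquad |J|\in\{\sin 2\beta,\,1\}.
\]

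\emph{Lower bound and main point.} The Cauchy--Schwarz inequality for complex vectors gives $\abs{\rho(\tau)\cdot v_j} \leq \abs{\rho(\tau)} = \sqrt{2\tau^2+k^2}$, so taking $\tau_0 := k$, for every $\tau\geq\tau_0$ one has $2\tau^2+k^2\leq 3\tau^2$ and hence
\[
\abs{\int_{\mathfrak P} \rme^{\rho(\tau)\cdot(x-x_c)}\,dx} \geq \frac{|J|}{(2\tau^2+k^2)^{n/2}} \geq \frac{|J|}{3^{n/2}}\,\tau^{-n} \geq c\,\tau^{-n}
\]
with $c=c(\alpha_m,\alpha_M,n)>0$. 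There is no deep obstacle: the only real idea is to align $-\Re\rho$ with the axis of the \emph{larger} cone $\mathcal Q$ (rather than an axis of $\mathfrak P$ or a single edge), which is what simultaneously achieves two competing requirements, namely that the exponential decays uniformly on all of $\mathcal Q$ (needed to control the error terms in \eqref{integralEstims}) and that $\Re(\rho\cdot v_j)$ stays strictly negative on each edge of $\mathfrak P$, so that the separated integral converges and the product $\prod_j\abs{\rho\cdot v_j}$ in the denominator is only $O(\tau^n)$, producing exactly the sought $\tau^{-n}$ bound.
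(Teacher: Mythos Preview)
Your proof is correct and in fact more direct than the paper's. Both arguments choose $-\Re\rho(\tau)$ along the axis of $\mathcal Q$ and evaluate the Laplace integral over $\mathfrak P$ in closed form, but they differ in how the final lower bound is extracted. The paper first treats the limiting case $\zeta\cdot\zeta=0$ (i.e.\ $k=0$, $\abs{\Re\zeta}=\abs{\Im\zeta}=1$), obtaining a positive lower bound $2C_{\alpha_m,\alpha_M,n}$ for $\abs{\mathscr L(\zeta)}=\abs{\int_{\mathfrak P}e^{\zeta\cdot(x-x_c)}dx}$, and then passes to $\rho(\tau)$ by a perturbation argument: the mean value theorem yields $\abs{\mathscr L(\zeta)-\mathscr L(\rho(\tau)/\tau)}\leq C_{\alpha',n}\,k\tau^{-1}$, and the triangle inequality together with the homogeneity $\mathscr L(\rho(\tau)/\tau)=\tau^n\mathscr L(\rho(\tau))$ gives the claim for $\tau\geq \tau_0 = C_{\alpha',n}\,k/C_{\alpha_m,\alpha_M,n}$. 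Your route bypasses the perturbation entirely: parametrizing $\mathfrak P$ by its edge vectors and applying Cauchy--Schwarz to get $\abs{\rho(\tau)\cdot v_j}\leq\abs{\rho(\tau)}=\sqrt{2\tau^2+k^2}$ produces the bound directly for all $\tau\geq k$, so your $\tau_0$ is even independent of $\alpha'$. The paper's detour through $\zeta\cdot\zeta=0$ would pay off if one needed lower bounds for $\int_{\mathfrak P} P_N(x-x_c)\,e^{\rho\cdot(x-x_c)}\,dx$ with a nonconstant polynomial $P_N$ (as hinted by the generality kept in Proposition~\ref{intByPartsEstim}), since then the closed form is less tractable and it is natural to analyse the $k=0$ Laplace transform first and perturb; for the constant case treated here, however, your argument is the cleaner one.
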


\begin{proof}
  We start by proving the claim for $\zeta\cdot\zeta=0$ instead of
  $\rho\cdot\rho+k^2=0$. Consider the cases $n=2$ and $n=3$
  separately. Let $(\mathcal Q, \mathfrak P) \in \mathscr G(\alpha_m,
  \alpha_M, \alpha', 2)$. Then there is a rigid motion $M_{\mathfrak
    P}$ and $\alpha \in {[{2\alpha_m, 2\alpha_M}]}$ such that
  $M_{\mathfrak P}$ takes $\mathfrak P$ to $\{ x \in \R^2 \mid x_2 >0,
  x_1 > a x_2\}$ where $a = 1/\tan \alpha$. We have $M_{\mathfrak P} x
  = R_{\mathfrak P}(x-x_c)$ for some rotation $R_{\mathfrak
    P}$. Denote $\xi = R_{\mathfrak P} \zeta$. Then
  \[
  \int_{\mathfrak P} e^{\zeta\cdot(x-x_c)}dx = \int_0^\infty
  \int_{ay_2}^\infty e^{\xi\cdot y} dy_1 dy_2 = \frac{1}{\xi_1 (\xi_2
    + a\xi_1)}
  \]
  if $\Re \xi_1 < 0$ and $\Re (\xi_2 + a \xi_1) < 0$. If
  $\zeta\cdot\zeta=0$ and $\abs{\Re\zeta}=1$ then the same is true for
  its rotated version $\xi$ and so $\abs{\xi_1}=\abs{\xi_2}=1$. This
  implies $\abs{\xi_2/\xi_1+a} \leq 1+\abs{a}$. Thus
  \[
  \abs{\int_{\mathfrak P} e^{\zeta\cdot(x-x_c)}dx} \geq
  \frac{1}{1+\abs{a}} > 0
  \]
  because $\abs{a}$ can be estimated above by $1/\min
  \abs{\tan\alpha}$, where the minimum is taken over $2\alpha_m \leq
  \alpha \leq 2\alpha_M$, and the limits are away from $0$ and $\pi$.
  
  The conditions $\Re\xi_1<0$ and $\Re(\xi_2 + a\xi_1)<0$ are implied
  at once if
  \[
  \Re\zeta \cdot (x-x_c) \leq - \cos \alpha' \abs{x-x_c}
  \]
  for all $x\in \mathcal Q$ as this means that the map $x \mapsto
  \exp(\Re\zeta\cdot(x-x_c))$ is exponentially decreasing in $\mathcal
  Q$, and a fortiori in $\mathfrak P$. We can now build $\zeta$. Let
  $-\Re\zeta$ be the unit vector on the central axis of $\mathcal Q$
  to make the above inequality valid. Next choose $\Im\zeta$ such that
  $\Im\zeta \perp \Re\zeta$, $\abs{\Im\zeta} = 1 =
  \abs{\Re\zeta}$. This implies $\zeta\cdot\zeta = 0$.

  \smallskip
  Consider the 3D case now. Let $(\mathcal Q, \mathfrak P) \in
  \mathscr G(\alpha_m, \alpha_M, \alpha', 3)$. Then there is a rigid
  motion $M_{\mathfrak P}$ bringing $\mathfrak P$ to
  ${]{0,\infty}[}^3$. We have $M_{\mathfrak P}x = R_{\mathfrak
    P}(x-x_c)$ for some rotation $R_{\mathfrak P}$. Denote again $\xi
  = R_{\mathfrak P}\zeta$. Then
  \[
  \int_{\mathfrak P} e^{\zeta\cdot(x-x_c)}dx = \int_{{]{0,\infty}[}^3}
  e^{\xi\cdot y}dy = \frac{-1}{\xi_1\xi_2\xi_3}
  \]
  as long as $\xi_j < 0$ for all $j$. As before, $\zeta\cdot\zeta=0$
  and $\abs{\Re\zeta}=1$ imply $\abs{\xi}\leq\sqrt{2}$ and the lower
  bound of $2^{-3/2}$ for the integral. The conditions $\xi_j<0$
  follow from
  \[
  \Re\zeta\cdot(x-x_c) \leq -\cos\alpha'\abs{x-x_c}
  \]
  in $\mathcal Q$. The choice of $\zeta$ is made as in the 2D case.
  
  \smallskip
  To recap, in both 2D and 3D, for any $(\mathcal Q, \mathfrak P) \in
  \mathscr G(\alpha_m, \alpha_M, \alpha', n)$ we found $\zeta\in\C^n$
  satisfying $\zeta\cdot\zeta=0$, $\abs{\zeta}=1$,
  $\Re\zeta\cdot(x-x_c) \leq -\cos\alpha' \abs{x-x_c}$ for all $x\in
  \mathcal Q$ with $x_c$ the vertex, and finally
  \[
  \abs{\int_{\mathfrak P} e^{\zeta\cdot(x-x_c)}dx} \geq
  2C_{\alpha_m,\alpha_M,n}>0.
  \]
  Let us build the curve $\rho(\tau)$ next. Set
  \[
  \rho(\tau) = \tau \Re\zeta + i \sqrt{\tau^2 + k^2} \Im\zeta.
  \]
  Is is easy to see that $\rho(\tau)/\tau \to \zeta$ as
  $\tau\to\infty$, and even easier to see that $\Re\rho(\tau) \cdot
  (x-x_c) \leq -\cos\alpha' \abs{\Re\rho(\tau)} \abs{x-x_c}$ for $x\in
  \mathcal Q$.  Write $\mathscr L(\zeta) = \int_{\mathfrak P}
  \exp(\zeta\cdot(x-x_c))dx$ to conserve space. We quantify how far
  $\mathscr L(\rho(\tau)/\tau)$ is from $\mathscr L(\zeta)$ next.
  Ideally we want an estimate that does not depend on $\mathcal Q$ or
  $\mathfrak P$.
  
  If we set $f(r) = \exp((\Re\zeta + i r \Im\zeta)\cdot(x-x_c))$ then
  $f(1) = \exp(\zeta\cdot(x-x_c))$ and $f(\sqrt{1+k^2/\tau^2}) =
  \exp(\rho(\tau)/\tau \cdot (x-x_c))$. By the mean value theorem
  \[
  \abs{ f(1) - f\left(\sqrt{1+\tfrac{k^2}{\tau^2}}\right) } \leq
  \sup_{1<r<\sqrt{1+\frac{k^2}{\tau^2}}} \abs{f'(r)} \abs{
    \sqrt{1+\frac{k^2}{\tau^2}} - 1}.
  \]
  Note that $\sqrt{1+k^2/\tau^2} - 1 \leq k/\tau$. Also $f'(r) = i
  \Im\zeta\cdot(x-x_c) f(r)$ and because $\abs{\Im\zeta} =
  \abs{\Re\zeta} = 1$ we have $\abs{f'(r)} \leq \abs{x-x_c}
  \exp(-\cos\alpha'\abs{x-x_c})$. Hence
  \[
  \abs{ f(1) - f\left(\sqrt{1+\tfrac{k^2}{\tau^2}}\right) } \leq
  \frac{k}{\tau} \abs{x-x_c} e^{-\cos\alpha'\abs{x-x_c}}.
  \]
  We see finally that
  \begin{align*}
    &\abs{ \mathscr L(\zeta) - \mathscr
      L\left(\frac{\rho(\tau)}{\tau}\right) } = \abs{ \int_{\mathfrak
        P} \big( f(1) - f(\sqrt{1+k^2/\tau^2}) \big) dx } \\ &\qquad
    \leq \frac{k}{\tau} \int_{\mathfrak P} e^{-\cos\alpha'
      \abs{x-x_c}} \abs{x-x_c} dx \\ &\qquad \leq \sigma(\mathfrak P
    \cap \mathbb S^{n-1}) \frac{k}{\tau} \int_0^\infty e^{-\cos\alpha'
      r} r^{1+n-1} dr \leq C_{\alpha',n} k \tau^{-1}
  \end{align*}
  because we can estimate $\sigma(\mathfrak P \cap \mathbb S^{n-1})
  \leq \sigma(\mathbb S^{n-1})$, and $\cos\alpha' > 0$ since $\alpha'
  < \pi/2$.

  Now, it is easily seen that $\mathscr L(\rho(\tau)/\tau) = \tau^n
  \mathscr L(\rho(\tau))$. Recall that our choice of $\zeta$ implies
  that $\abs{\mathscr L(\zeta)} \geq 2C_{\alpha_m, \alpha_M, n}$. By
  the triangle inequality
  \begin{align*}
    &\abs{ \tau^n \mathscr L(\rho(\tau)) } = \abs{ \mathscr
      L\left(\frac{\rho(\tau)}{\tau}\right) } \geq \abs{\mathscr
      L(\zeta)} - \abs{ \mathscr L(\zeta) - \mathscr
      L\left(\frac{\rho(\tau)}{\tau}\right)} \\ &\qquad >
    2C_{\alpha_m, \alpha_M, n} - C_{\alpha',n} k \tau^{-1} \geq
    C_{\alpha_m, \alpha_M, n} > 0
  \end{align*}
  if $\tau \geq C_{\alpha',n} k / C_{\alpha_m,\alpha_M,n}$ which is
  finite and depends only on the a-priori parameters.
\end{proof}

\section{Complex geometrical optics solution} \label{sect:cgo}

The construction of the CGO solutions for corner scattering was first
shown in \cite{BPS} and \cite{PSV}. We do the analysis more precisely
and keep track of what parameters the various bounds depend on. This
involves defining a ``norm'' for polyhedral regions. We start by
solving the Faddeev equation, then prove estimates for potentials
supported on polytopes and finally build the complex geometrical
optics solutions.

\begin{lemma}\label{FaddeevSol}
  Let $s\geq0$ and $1<r<2$ such that $1/r + 1/r'=1$ and
  \[
  \frac{2}{n+1} \leq \frac{1}{r} - \frac{1}{r'} < \frac{2}{n}.
  \]
  Let $q$ be a measurable function such that the pointwise multiplier
  operator $m_q$ maps $H^s_{r'}(\R^n) \to H^s_r(\R^n)$, and let $f \in
  H^s_r(\R^n)$.
  
  Let $I_0 = (2 M \norm{ m_q }_{H^s_{r'} \to H^s_r})^{2+n/r'-n/r}$,
  where $M=M(r,s,n)\geq1$ is fixed in the proof. Then if
  $\rho\in\C^n$, $\abs{\Im\rho} \geq I_0$ there is $\psi\in
  H^s_{r'}(\R^n)$ satisfying
  \[
  (\Delta+2\rho\cdot\nabla + q)\psi = f,
  \]
  \[
  \norm{\psi}_{H^s_{r'}(\R^n)} \leq 2 M \abs{\Im\rho}^{-(2 +
    \frac{n}{r'} - \frac{n}{r})} \norm{f}_{H^s_r(\R^n)}.
  \]

  There is also $p\geq2$ and a Sobolev embedding constant $E =
  E(s,n,r)\geq1$ such that
  \[
  \norm{\psi}_{L^p(\R^n)} \leq E M \abs{\Im\rho}^{-(2 + \frac{n}{r'} -
    \frac{n}{r})} \norm{f}_{H^s_r(\R^n)}.
  \]
  We have the following observations about the choice of $p$ and the
  decay rate of $\psi$ compared to $\abs{\Im\rho}^{-n/p}$.
  \begin{enumerate}
    \item If $s>\frac{n}{r'}$ then $p=\infty$ and
      $2+\frac{n}{r'}-\frac{n}{r} > \frac{n}{p}$,
    \item if $s=\frac{n}{r'}$ then we may choose any finite $p$ such
      that $\frac{1}{p} < \frac{2}{n} + \frac{1}{r'} - \frac{1}{r}$
      which is positive, and then $2+\frac{n}{r'}-\frac{n}{r} >
      \frac{n}{p}$,
    \item if $\frac{n}{r}-2 < s < \frac{n}{r'}$ then $s-\frac{n}{r'} =
      -\frac{n}{p}$ and $2+\frac{n}{r'}-\frac{n}{r} > \frac{n}{p}$,
      and finally
    \item if $s\leq \frac{n}{r}-2$ then $s-\frac{n}{r'} = -
      \frac{n}{p}$ but $2+\frac{n}{r'}-\frac{n}{r} \leq \frac{n}{p}$.
  \end{enumerate}
  
  Lastly, if $f\in L^2_{loc}$ and $q\in L^\infty_{loc}$ then given any
  bounded domain, for example $B_{3R}$, we have the elliptic
  regularity estimate
  \[
  \norm{\psi}_{H^2(B_{2R})} \leq C_R \big(\norm{f}_{L^2(B_{3R})} + (1
  + \abs{\rho}^2 + \norm{q}_{L^\infty(B_{3R})})
  \norm{\psi}_{L^p(\R^n)}\big)
  \]
  where $C_R$ depends only on $R$.
\end{lemma}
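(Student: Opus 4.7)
The plan is to construct a Faddeev-type Green's function for the conjugated Laplacian $\Delta+2\rho\cdot\nabla$, perturb by $q$ via Neumann series, and then package the conclusions through Sobolev embedding and local elliptic regularity. First I would define $G_\rho$ as the Fourier multiplier with symbol $-(|\xi|^2-2i\rho\cdot\xi)^{-1}$, so that $(\Delta+2\rho\cdot\nabla)G_\rho f=f$, and establish the Kenig--Ruiz--Sogge / Sylvester--Uhlmann type mapping estimate
\[
\norm{G_\rho f}_{L^{r'}(\R^n)} \leq M_0 \abs{\Im\rho}^{-(2+n/r'-n/r)} \norm{f}_{L^r(\R^n)}
\]
in the admissible strip $2/(n+1)\leq 1/r-1/r'<2/n$; this is essentially a restriction / Stein--Tomas phenomenon, since the real part of the symbol vanishes on a sphere of radius $\abs{\Im\rho}$ after translation. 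Commuting $G_\rho$ with the Bessel potential $(1-\Delta)^{s/2}$ lifts the estimate to $H^s_r\to H^s_{r'}$ with some constant $M=M(r,s,n)\geq 1$.

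Next I would solve $(\Delta+2\rho\cdot\nabla+q)\psi=f$ by writing $\psi=G_\rho g$ and recasting the equation as $(I+qG_\rho)g=f$ in $H^s_r$. By the multiplier hypothesis on $m_q$ and the mapping estimate, $qG_\rho$ has operator norm at most $\norm{m_q}_{H^s_{r'}\to H^s_r}\cdot M\abs{\Im\rho}^{-(2+n/r'-n/r)}$ on $H^s_r$, which the assumption $\abs{\Im\rho}\geq I_0$ drives below $1/2$, so Neumann series give $\norm{(I+qG_\rho)^{-1}}\leq 2$ and hence existence of $\psi$ with the claimed $H^s_{r'}$-bound. The $L^p$-bound is then the Sobolev embedding $H^s_{r'}\hookrightarrow L^p$: the Morrey case when $s>n/r'$ gives $p=\infty$; at the borderline $s=n/r'$ one has $H^{n/r'}_{r'}\hookrightarrow L^p$ for any finite $p$ with $1/p<2/n+1/r'-1/r$; and the standard embedding gives $n/p=n/r'-s$ when $s<n/r'$. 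The elementary equivalence $s>n/r-2\Leftrightarrow n/r'-s<2+n/r'-n/r$ is precisely what distinguishes the ``gain'' cases (1)--(3) from the no-gain case (4).

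For the elliptic regularity estimate on $B_{2R}$, I would rewrite the equation as $\Delta\psi=f-2\rho\cdot\nabla\psi-q\psi$ and apply the standard interior $H^2$-bound with a smooth cutoff $\chi\equiv 1$ on $B_{2R}$, $\mathrm{supp}\,\chi\subset B_{3R}$. This reduces matters to controlling $\norm{\psi}_{L^2(B_{3R})}$ and $\norm{\nabla\psi}_{L^2(B_{3R})}$ in terms of $\norm{\psi}_{L^p(\R^n)}$ and $\norm{f}_{L^2(B_{3R})}$. The former is immediate from finite measure of $B_{3R}$ combined with H\"older's inequality; the latter follows from a Gagliardo--Nirenberg-type interpolation $\norm{\nabla\psi}_{L^2}\leq\varepsilon\norm{\Delta\psi}_{L^2}+C_\varepsilon\norm{\psi}_{L^2}$ applied to $\chi\psi$, which absorbs the factor $\abs{\rho}$ into the left-hand side of the $H^2$-estimate and produces the stated dependence $1+\abs{\rho}^2+\norm{q}_{L^\infty(B_{3R})}$.

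The main obstacle is the first step: producing the $L^r\to L^{r'}$ resolvent estimate with the sharp exponent $-(2+n/r'-n/r)$ uniformly in $\rho$, and tracking the constant $M(r,s,n)$ carefully across the Sobolev scale so that the Neumann-series threshold $I_0$ comes out in the stated form. Everything afterwards---Neumann inversion, Sobolev embedding, and interior regularity---is essentially routine once the sharp resolvent bound is available, although some care is needed to verify the four case distinctions for the $L^p$-exponent and to handle the $\abs{\rho}^2$ dependence in the elliptic step.
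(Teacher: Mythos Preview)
Your construction of $\psi$ via the Kenig--Ruiz--Sogge resolvent bound, Neumann series in $H^s_r$, and Sobolev embedding is exactly the paper's route (the paper outsources the first two to \cite{KRS,Ruiz} and \cite{PSV} Proposition~3.3, but the mechanism is as you describe), and your case analysis for $p$ matches.

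The local $H^2$ estimate is where you and the paper diverge. You move $2\rho\cdot\nabla\psi$ to the right and try to absorb it by interpolation on $\chi\psi$. That handles the term $\chi\,\rho\cdot\nabla\psi=\rho\cdot\nabla(\chi\psi)-(\rho\cdot\nabla\chi)\psi$ correctly, but it does \emph{not} control the cut-off commutator $\nabla\chi\cdot\nabla\psi$, which lives on the annulus where $\chi$ is small and so is not dominated by $\norm{\nabla(\chi\psi)}_{L^2}$ or $\norm{\chi\psi}_{H^2}$. As written the absorption does not close; you need a preliminary $H^1$ bound on a slightly larger ball (via Caccioppoli or a second cutoff) before the $H^2$ step. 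The paper avoids this by keeping $\Delta+2\rho\cdot\nabla$ together: from the symbol identity it proves the global a~priori estimate
\[
\norm{G}_{H^{s+2}(\R^n)} \leq 3\norm{(\Delta+2\rho\cdot\nabla)G}_{H^s(\R^n)} + (1+6\abs{\rho}^2)\norm{G}_{H^s(\R^n)},
\]
using $\abs{\rho\cdot\xi}\leq\bigl|-\abs{\xi}^2+2i\rho\cdot\xi\bigr|+3\abs{\rho}^2$, and then localizes with a chain $\chi,\tilde\chi,\overline\chi$ and bootstraps $L^2\to H^1\to H^2$ in two applications (at $s=-1$ the commutator $\nabla\chi\cdot\nabla(\tilde\chi\psi)$ sits harmlessly in $H^{-1}$). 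Both routes work, but the paper's packaging makes the $\abs{\rho}^2$ dependence and the commutator bookkeeping automatic.
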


\begin{proof}
  Fix $M<\infty$ as the $\rho$-independent constant in the estimate
  \[
  \norm{f}_{L^{r'}(\R^n)} \leq M \abs{\Im\rho}^{n(1/r-1/r')-2}
  \norm{(\Delta+2\rho\cdot\nabla)f}_{L^r(\R^n)}
  \]
  by \cite{KRS} or in Theorem 5.4 in the notes \cite{Ruiz}. By
  Proposition 3.3 in \cite{PSV} the equation
  \[
  (\Delta +2\rho\cdot\nabla + q)\psi = f
  \]
  has a solution $\psi\in H^s_{r'}(\R^n)$ when $\abs{\Im \rho} \geq
  I_0$. Moreover it satisfies
  \[
  \norm{\psi}_{H^s_{r'}(\R^n)} \leq 2M \abs{\Im\rho}^{-(2+n/r'-n/r)}
  \norm{f}_{H^s_r(\R^n)}.
  \]
  Sobolev embedding implies the $L^p$ estimates in the four cases of
  the statement. Note that in each case we have $p\geq r'>2$.
  
  The elliptic regularity estimate needs some work. First assume that
  $G\in H^s(\R^n)$, $F\in H^s(\R^n)$ and
  $(\Delta+2\rho\cdot\nabla)G=F$.  Then
  \begin{align*}
    &\norm{G}_{H^{s+2}(\R^n)} = \norm{(1+\abs{\xi}^2)^{s/2}
      (1+\abs{\xi}^2) \hat G}_{L^2(\R^n)} \\ &\qquad =
    \norm{(1+\abs{\xi}^2)^{s/2}( \hat G + 2i\rho\cdot\xi \hat G - \hat
      F )}_{L^2(\R^n)} \\ &\qquad \leq \norm{G}_{H^s(\R^n)} +
    \norm{F}_{H^s(\R^n)} + 2\norm{(1+\abs{\xi}^2)^{s/2} \rho\cdot\xi
      \hat G}_{L^2(\R^n)}
  \end{align*}
  because $(-\abs{\xi}^2 + 2i\rho\cdot\xi)\hat G = \hat F$. By looking
  at what happens when $\abs{\xi}$ is larger or smaller than
  $3\abs{\rho}$ we see that $\abs{\rho\cdot\xi} \leq
  \lvert-\abs{\xi}^2 + 2i\rho\cdot\xi\rvert + 3 \abs{\rho}^2$. Hence
  \begin{equation}\label{constantCoefApriori}
    \norm{G}_{H^{s+2}(\R^n)} \leq 3\norm{F}_{H^s(\R^n)} +
    (1+6\abs{\rho}^2)\norm{G}_{H^s(\R^n)}.
  \end{equation}
  
  Now let $\chi,\tilde\chi \in C^\infty_0(\R^n)$ such that $\tilde\chi
  \equiv 1$ on $\supp \chi$. Assume that $f\in L^2_{loc}$ and $q\in
  L^\infty_{loc}$. Next
  \begin{equation}\label{localizedFaddeev}
    (\Delta+2\rho\cdot\nabla)(\chi\psi) = \chi (f-q\psi) +
    2\nabla\chi\cdot\nabla(\tilde\chi\psi) + (\Delta\chi +
    2\rho\cdot\nabla\chi)\psi
  \end{equation}
  in the distribution sense. We have $q\psi \in L^p_{loc}$, $p\geq2$
  so $\chi q\psi \in L^2(\R^n)$. Similarly $\tilde\chi\psi\in
  L^2(\R^n)$ and so $\nabla\chi\cdot\nabla(\tilde\chi\psi)\in
  H^{-1}(\R^n)$. The last term on the right-hand side is in
  $L^2(\R^n)$. By absorbing all the norms of $\chi$, $\tilde\chi$ into
  a constant we get the estimate
  \[
  C_{\chi,\tilde\chi,p} \big(\norm{f}_{L^2(\supp \chi)} +
  (1+\abs{\rho} + \norm{q}_{L^\infty(\supp\chi)})
  \norm{\psi}_{L^p(\R^n)}\big)
  \]
  for the $H^{-1}(\R^n)$-norm of the right-hand side. By
  \eqref{constantCoefApriori} and since $\psi\in L^p$, $p\geq2$,
  \[
  \norm{\chi\psi}_{H^1(\R^n)} \leq \tilde C_{\chi,\tilde\chi,p}
  \big(\norm{f}_{L^2(\supp\chi)} + (1+\abs{\rho} + \abs{\rho}^2 +
  \norm{q}_{L^\infty(\supp\chi)}) \norm{\psi}_{L^p(\R^n)}\big)
  \]
  and this is true no matter the choice of $\chi,\tilde\chi\in
  C^\infty_0(\R^n)$, $\tilde\chi\equiv 1$ on $\supp\chi$.
  
  Consider the bounded domain $B_{2R}$ now. Take a chain of cut-off
  functions $\chi,\tilde\chi,\overline\chi \in C^\infty_0(B_{3R})$
  such that $\overline\chi\equiv1$ on $\supp \tilde\chi$,
  $\tilde\chi\equiv1$ on $\supp\chi$ and finally $\chi\equiv1$ on
  $B_{2R}$. Then $\chi\psi\in H^2(\R^n)$ according to
  \eqref{constantCoefApriori} if the right-hand side of
  \eqref{localizedFaddeev} is in $L^2(\R^n)$. But this is indeed true
  by going through the previous paragraph while substituting
  $(\tilde\chi,\overline\chi)$ for $(\chi,\tilde\chi)$. This gives the
  final estimate
  \begin{align*}
    &\norm{\psi}_{H^2(B_{2R})} \leq \norm{\chi\psi}_{H^2(\R^n)}
    \\ &\qquad \leq \mathcal{C}_{\chi,\tilde\chi,\overline\chi,p}
    \big(\norm{f}_{L^2(\supp\tilde\chi)} + (1+\abs{\rho} +
    \abs{\rho}^2 + \norm{q}_{L^\infty(\supp\tilde\chi)})
    \norm{\psi}_{L^p(\R^n)}\big)
  \end{align*}
  which can be bounded above by the estimate of the statement. Note
  that the test functions can be chosen based exclusively on the set
  $B_{2R}$, and their norms have a finite supremum while $p$ explores
  the whole set ${[{2,\infty}]}$. Hence the constant can be made to
  depend only on $R$.
\end{proof}

The next estimate concerns a potential consisting of a
H\"older-continuous function multiplying the characteristic function
of a polytope. For a clearer notation we define a multiplier norm for
a polytope first.

\begin{definition}
  A set $P \subset \R^n$ is a \emph{bounded open polytope} if $P$ is
  bounded, open and $\overline P$ is a finite union of finite
  intersections of closed half-spaces.
\end{definition}

\begin{definition}\label{triangDef}
  Let $P \subset \R^n$ be a bounded open polytope. We say a collection
  $\{ H_{jl} \mid j=1,\ldots,J,\, l=1,\ldots,L_j \}$ of half-spaces is
  a \emph{triangulation} of $P$ if $J \in \N$, $L_1,\ldots,L_J \in
  \N$, $H \subset H_{jl} \subset \overline H$ for some open half-space
  $H\in\R^n$, the intersections $\bigcap_l H_{jl}$ are disjoint for
  different $j$, and
  \[
  P = \bigcup_{j=1}^J \bigcap_{l=1}^{L_j} H_{jl}.
  \]
  If $s\in\R$ and $1\leq r <\infty$ let $C_{s,r} \in \R \cup
  \{+\infty\}$ be the norm of the map $H^s_r(\R^n) \to H^s_r(\R^n)$,
  $f \mapsto \chi_H f$, where $H\subset\R^n$ is a half-space. Then by
  $\norm{P}_{T(s,r)}$ we mean
  \begin{equation}
    \norm{P}_{T(s,r)} = \inf \left\{ \sum_{j=1}^J C_{s,r}^{L_j}
    \,\middle|\, \text{$(H_{jl})_{j,l}$ is a triangulation of
      $P$}\right\}.
  \end{equation}
\end{definition}

\begin{lemma}\label{Pmult}
  Let $P\subset\R^n$ be a bounded open polytope, $s \geq 0$, $r\geq1$
  and $sr<1$. Then $\norm{P}_{T(s,r)} < \infty$ and $\norm{\chi_P
    f}_{H^s_r(\R^n)} \leq \norm{P}_{T(s,r)} \norm{f}_{H^s_r(\R^n)}$.
  Moreover we have $\norm{P}_{T(s_0,r)} \leq \norm{P}_{T(s_1,r)}$ if
  $s_0 \leq s_1$.
\end{lemma}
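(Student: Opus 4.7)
The plan has four steps. First, establish the single half-space multiplier bound $C_{s,r}<\infty$. For an open half-space $H\subset\R^n$ and $0\leq s$, $1\leq r$ with $sr<1$, multiplication by $\chi_H$ is bounded on $H^s_r(\R^n)$. For $s=0$ one has $C_{0,r}=1$ trivially. For $0<s<1$ this is the classical Strichartz/Triebel fact, deduced from the Gagliardo--Slobodeckij equivalent norm
\[
\norm{f}_{H^s_r}^r \sim \norm{f}_{L^r}^r + \int_{\R^n}\int_{\R^n} \frac{\abs{f(x)-f(y)}^r}{\abs{x-y}^{n+sr}}\,dx\,dy
\]
by splitting the double integral according to whether $x,y$ both lie in $H$, both in $H^c$, or on opposite sides. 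The first two contributions are controlled directly by $\norm{f}_{H^s_r}^r$. The mixed contribution reduces to $\int_H \abs{f(x)}^r \int_{H^c}\abs{x-y}^{-n-sr}\,dy\,dx$, whose inner integral is finite exactly because $sr<1$. Note also that $C_{s,r}\geq 1$ by taking $f$ supported in $H$.

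Second, verify that at least one triangulation of $P$ exists, so that the infimum defining $\norm{P}_{T(s,r)}$ is taken over a nonempty set and is finite. By the definition of bounded open polytope, $\overline P=\bigcup_j\bigcap_l \overline H_{jl}$ for finitely many half-spaces, so up to a null set $P=\bigcup_j\bigcap_l H_{jl}$. To make the pieces disjoint one inductively replaces a piece $\bigcap_l H_{jl}$ overlapping a previously processed piece $\bigcap_{l'} H_{j'l'}$ by its intersection with $\bigcup_{l'} H_{j'l'}^c$; distributing the union produces finitely many new pieces, each still a finite intersection of half-spaces (with the right inclusion $H\subset H_{jl}\subset \overline H$ chosen so that the pieces are actually disjoint). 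The result is a triangulation in the sense of Definition~\ref{triangDef}.

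Third, prove the multiplier estimate. For any triangulation $(H_{jl})$, disjointness of the pieces gives almost everywhere the pointwise identity
\[
\chi_P=\sum_{j=1}^J\prod_{l=1}^{L_j}\chi_{H_{jl}}.
\]
Iterating the half-space bound $L_j$ times yields
\[
\norm{\chi_P f}_{H^s_r(\R^n)}\leq \sum_{j=1}^J C_{s,r}^{L_j}\norm{f}_{H^s_r(\R^n)},
\]
and taking the infimum over triangulations gives $\norm{\chi_P f}_{H^s_r}\leq\norm{P}_{T(s,r)}\norm{f}_{H^s_r}$. In particular, the existence of one triangulation from the previous step and the finiteness of $C_{s,r}$ show $\norm{P}_{T(s,r)}<\infty$.

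Fourth, derive the monotonicity in $s$. Complex interpolation of the Bessel potential scale (Calder\'on's theorem) applied to the multiplier $f\mapsto \chi_H f$, which is bounded on $L^r$ with norm $1$ and on $H^{s_1}_r$ with norm $C_{s_1,r}$, yields $C_{s,r}\leq C_{s_1,r}^{s/s_1}$ for $0\leq s\leq s_1$. Since $C_{s_1,r}\geq 1$ and $s_0/s_1\leq 1$, this implies $C_{s_0,r}\leq C_{s_1,r}$ whenever $0\leq s_0\leq s_1$ and $s_1 r<1$. Applying this inequality term by term to any fixed triangulation of $P$ gives $\sum_j C_{s_0,r}^{L_j}\leq \sum_j C_{s_1,r}^{L_j}$, and taking infima produces $\norm{P}_{T(s_0,r)}\leq \norm{P}_{T(s_1,r)}$. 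The main technical ingredient is the half-space multiplier bound itself and in particular the convergence of the cross integral, which genuinely requires $sr<1$; the remaining three steps are essentially algebraic bookkeeping once that estimate is in hand.
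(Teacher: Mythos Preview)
Your proof follows essentially the same route as the paper: the half-space multiplier bound, the pointwise identity $\chi_P=\sum_j\prod_l\chi_{H_{jl}}$ from a disjoint decomposition, and complex interpolation for monotonicity. The paper is terser, citing Triebel for the half-space bound and invoking a simplicial decomposition of $P$ (each simplex being an intersection of $n+1$ half-spaces) rather than your inductive disjointification, but the logic is identical.

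One inaccuracy in your sketch: in the mixed term
\[
\int_H \abs{f(x)}^r \int_{H^c}\abs{x-y}^{-n-sr}\,dy\,dx,
\]
the inner integral is finite for any $sr>0$ (since the exponent $n+sr$ exceeds $n$); its value is a constant times $d(x,\partial H)^{-sr}$. The constraint $sr<1$ enters at the next step, where one must bound $\int_H \abs{f(x)}^r\, d(x,\partial H)^{-sr}\,dx$ by $\norm{f}_{H^s_r}^r$, and this is a Hardy-type inequality that holds precisely when $sr<1$. So the hypothesis is used, just not where you said.
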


\begin{proof}
  By definition $P$ has a finite triangulation of let us say
  $m<\infty$ simplices. Each simplex in $\R^n$ is the intersection of
  $n+1$ half-spaces. By Triebel \cite{Triebel1}, Section 2.8.7, the
  map $f \mapsto \chi_H f$ is bounded in $H^s_r(\R^n)$ under the
  conditions for $s$ and $r$ given. Hence $\norm{P}_{T(s,r)} \leq m
  C_{s,r}^{n+1} < \infty$. If $(H_{jl})_{j,l}$ is a triangulation,
  then the intersections $\bigcap_{l=1}^{L_j} H_{jl}$ are disjoint, so
  $\chi_P = \sum_{j=1}^J \prod_{l=1}^{L_j} \chi_{H_{jl}}$ and thus
  $\norm{\chi_P f}_{H^s_r} \leq \sum_{j=1}^J C_{s,r}^{L_j}
  \norm{f}_{H^s_r}$. The multiplier estimate follows by taking the
  infimum over all triangulations. The last claim follows since
  complex interpolation of Sobolev spaces implies that $C_{s_0,r} \leq
  C_{s_1,r}$ if $s_0\leq s_1$.
\end{proof}

\begin{lemma}\label{potNorm}
  Let $V = \chi_P \varphi$ with $P \subset B_R$ an open polytope and
  $\varphi \in C^\alpha(\R^n)$ with $\alpha > 0$. Let $0\leq s <
  \alpha$, $1 \leq r \leq 2$ and $sr<1$. Then $V \in H^s_r(\R^n)$,
  \[
  \norm{V}_{H^s_r(\R^n)} \leq C_{\alpha,s,r,R} \norm{P}_{T(s,r)} 
  \norm{\varphi}_{C^\alpha(\R^n)}
  \]
  and
  \[
  \norm{Vf}_{H^s_r(\R^n)} \leq C_{\alpha,s,r,R} \norm{P}_{T(s,r)}
  \norm{\varphi}_{C^\alpha(\R^n)} \norm{f}_{H^s_{r'}(\R^n)}
  \]
  where $1/r+1/r'=1$ and $\norm{P}_{T(s,r)}$ is defined in
  Definition~\ref{triangDef}.
\end{lemma}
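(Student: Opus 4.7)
The plan is to reduce everything to a compactly supported Hölder multiplier and then combine Lemma~\ref{Pmult} with a direct Gagliardo estimate and a fractional Leibniz rule. Since $P\subset B_R$, pick $\chi\in C^\infty_c(B_{3R/2})$ with $\chi\equiv 1$ on $\overline{B_R}$ and write $\tilde\varphi:=\chi\varphi$. Then $V=\chi_P\tilde\varphi$ and $Vf=\chi_P(\tilde\varphi f)$, with $\norm{\tilde\varphi}_{C^\alpha(\R^n)}\leq C_R\norm{\varphi}_{C^\alpha}$ and $\mathrm{supp}\,\tilde\varphi\subset B_{3R/2}$. By Lemma~\ref{Pmult}, both inequalities of the lemma reduce to controlling $\norm{\tilde\varphi}_{H^s_r}$ and $\norm{\tilde\varphi f}_{H^s_r}$ respectively.

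I would then establish the auxiliary bound $\norm{\tilde\varphi}_{H^s_p(\R^n)}\leq C_{\alpha,s,p,R}\norm{\varphi}_{C^\alpha}$ for every $1\leq p<\infty$ and $0\leq s<\alpha$. The $L^p$ part is immediate from compact support. For $s>0$ I estimate the Slobodeckij seminorm by splitting into $\{|x-y|\leq 1\}$ and $\{|x-y|>1\}$. On the near region the Hölder bound $|\tilde\varphi(x)-\tilde\varphi(y)|\leq\norm{\varphi}_{C^\alpha}|x-y|^\alpha$ produces an integrand of the form $|x-y|^{(\alpha-s)p-n}$, integrable at the origin because $\alpha>s$. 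On the far region $|\tilde\varphi(x)-\tilde\varphi(y)|\leq 2\norm{\varphi}_\infty$ and the kernel $|x-y|^{-n-sp}$ is integrable at infinity because $sp>0$. Compactness of $\mathrm{supp}\,\tilde\varphi$ confines the outer variable to a fixed ball, providing the $R$-dependent factor. Specialising $p=r$ and applying Lemma~\ref{Pmult} yields
\[
\norm{V}_{H^s_r}\leq\norm{P}_{T(s,r)}\norm{\tilde\varphi}_{H^s_r}\leq C_{\alpha,s,r,R}\norm{P}_{T(s,r)}\norm{\varphi}_{C^\alpha},
\]
which is the first asserted inequality.

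For the multiplier estimate I invoke a Kato--Ponce fractional Leibniz inequality in Bessel potential spaces:
\[
\norm{\tilde\varphi f}_{H^s_r}\leq C\bigl(\norm{\tilde\varphi}_{H^s_{p_1}}\norm{f}_{L^{p_2}}+\norm{\tilde\varphi}_{L^{q_1}}\norm{f}_{H^s_{q_2}}\bigr)
\]
whenever $1/r=1/p_1+1/p_2=1/q_1+1/q_2$ in the admissible range. Setting $p_2=q_2=r'$ forces $p_1=q_1=rr'/(r'-r)\in[r,\infty]$. The previous paragraph controls $\norm{\tilde\varphi}_{H^s_{p_1}}\leq C_R\norm{\varphi}_{C^\alpha}$; compact support gives $\norm{\tilde\varphi}_{L^{q_1}}\leq C_R\norm{\varphi}_{C^\alpha}$; and $\norm{f}_{L^{r'}}\leq\norm{f}_{H^s_{r'}}$ trivially. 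Combining with Lemma~\ref{Pmult} yields the second inequality of the lemma.

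The main technical obstacle is invoking the fractional Leibniz inequality in the exact space framework used in the paper and simultaneously keeping track of the compact-support factor $C_R$, which absorbs the $L^{r'}\to L^r$ integrability mismatch that would otherwise obstruct a direct Gagliardo computation (any naive Hölder split in the Slobodeckij seminorm produces the non-integrable kernel $|x-y|^{-n}$). In the endpoint $r=r'=2$ the choice $p_1=q_1=\infty$ collapses the Leibniz rule to the familiar paraproduct bound $\norm{\tilde\varphi f}_{H^s}\leq C\norm{\tilde\varphi}_{C^\alpha}\norm{f}_{H^s}$ valid for $0\leq s<\alpha$, which is classical.
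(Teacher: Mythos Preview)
Your overall structure matches the paper's: localize $\varphi$ by a smooth cutoff, apply Lemma~\ref{Pmult} to strip off $\chi_P$, and control the product in $H^s_r$ via a bilinear estimate with the same H\"older splitting $1/r = 1/p_1 + 1/r'$, $p_1 = r/(2-r)$, that the paper uses (there citing \cite{PSV} Proposition~3.5 or \cite{Behzadan--Holst} rather than Kato--Ponce).

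There is, however, a gap in your auxiliary bound $\norm{\tilde\varphi}_{H^s_p} \leq C\norm{\varphi}_{C^\alpha}$. The Slobodeckij seminorm you compute is the $W^{s,p} = B^s_{p,p}$ seminorm, not the Bessel-potential norm $H^s_p = F^s_{p,2}$ used throughout the paper; these coincide only at $p=2$. For $p\leq 2$ one has $B^s_{p,p}\hookrightarrow F^s_{p,2}$, so your first inequality (with $p=r\leq 2$) survives, but for $p>2$ the embedding reverses. Since $p_1 = r/(2-r) > 2$ whenever $r>4/3$, your argument for the multiplier estimate does not cover the full range $1\leq r\leq 2$. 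An easy repair is to run your Gagliardo computation at an intermediate index $s<s'<\alpha$ and then use $B^{s'}_{p_1,p_1}\hookrightarrow F^s_{p_1,2}$, valid for any $p_1$. The paper sidesteps the issue entirely by quoting Triebel's multiplier theorem (\cite{Triebel2}, Section~4.2.2): $C^\alpha$ functions are bounded pointwise multipliers on $H^s_p$ for every $s<\alpha$ and $1<p<\infty$, which immediately gives $\norm{\varphi\Phi}_{H^s_{r/(2-r)}}\leq C\norm{\varphi}_{C^\alpha}\norm{\Phi}_{H^s_{r/(2-r)}}$ without any direct seminorm computation.
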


\begin{proof}
  Let $\Phi \in C^\infty_0$ be such that $\Phi = 1$ on $B_R$. Then we
  have the representation
  \[
  V = \chi_P \varphi \Phi
  \]
  which helps us prove the estimates.

  By the last corollary of Section 4.2.2 in \cite{Triebel2} there is a
  finite upper bound $C_{\alpha,s,r}$ for the pointwise multiplier
  operator norm of any $C^\alpha$ function multiplying in
  $H^s_r(\R^n)$ when $s<\alpha$. Then the first claim
  \[
  \norm{V}_{H^s_r(\R^n)} \leq C_{\alpha,s,r} 
  \norm{\varphi}_{C^\alpha(\R^n)} \norm{\chi_P \Phi}_{H^s_r(\R^n)} \leq 
  C_{\alpha,s,r,\Phi} \norm{P}_{T(s,r)} \norm{\varphi}_{C^\alpha(\R^n)}
  \]
  follows from Lemma~\ref{Pmult} since $\norm{P}_{T(s,r)} < \infty$ by
  $s\geq0$, $r\geq1$ and $sr<1$.
  
  By \cite{PSV} Proposition 3.5 or \cite{Behzadan--Holst} Theorem 7.5
  the product of a $H^s_{r'}(B_R)$ and $H^s_{r/(2-r)}(B_R)$ function
  is in $H^s_r(B_R)$ when $s\geq0$ and $1\leq r\leq2$. According to
  \cite{Triebel2}, we know that $C^\alpha$-functions are pointwise
  multipliers for $H^s_{r/(2-r)}$ too. The last claim
  \begin{align*}
    &\norm{Vf}_{H^s_r(\R^n)} \leq \norm{P}_{T(s,r)} \norm{\varphi \Phi
      f}_{H^s_r(B_R)} \\ &\qquad\leq M_{s,r} \norm{P}_{T(s,r)}
    \norm{\varphi \Phi}_{H^s_\frac{r}{2-r}(B_R)}
    \norm{f}_{H^s_{r'}(B_R)} \\ &\qquad \leq C_{\alpha,s,r,\Phi}
    \norm{P}_{T(s,r)} \norm{\varphi}_{C^\alpha(\R^n)}
    \norm{f}_{H^s_{r'}(\R^n)}
  \end{align*}
  follows then because $V$ is supported in $\overline B_R$.
\end{proof}

We are now ready to specialise previous lemmas into proving the
existence of the complex geometrical optics solutions in the context
of corner scattering in two and three dimensions.
  
The conditions on the H\"older smoothness index $\alpha$ of the
following proposition follow from various requirements: For the
half-space multipliers we needed $sr<1$ and $s<\alpha$. To have good
enough error decay estimates for $\psi$ from Lemma~\ref{FaddeevSol} we
need $s > n/r-2$. Combining these gives $n-2r<sr<1$ i.e. $r >
(n-1)/2$. On the other hand we must have $1/r - 1/r' \geq 2/(n+1)$
i.e. $r \leq 2(n+1)/(n+3)$ in Lemma~\ref{FaddeevSol}. These two
inequalities have solutions only when $n\in\{2,3\}$. The use of these
solutions for corner scattering in higher dimensions requires the
Fourier transforms of Besov spaces \cite{BPS}.

Since $\alpha$ is the parameter that ultimately decides which
potentials are admissible, we want a largest possible range for
it. This is achieved by making $s$, and thus $n/r-2$, as small as
possible. Hence $r$ must be largest, and a fortiori we choose
$r=2(n+1)/(n+3)$.

\begin{proposition}\label{CGOsol}
  Let $n\in\{2,3\}$ and $0 \leq s < 5/6$ in 2D or $1/4 < s < 3/4$ in
  3D.  Let $\varphi \in C^\alpha(\R^n)$ with $\alpha > s$ and
  $\norm{\varphi}_{C^\alpha} \leq \mathcal M$. Let $P \subset B_R$ be
  a bounded open polytope, $r=2(n+1)/(n+3)$, and assume that
  $\norm{P}_{T(s,r)} \leq \mathcal D$.
    
  Let $k>0$ and set $V = \chi_P \varphi$. Then there is $p\geq2$ and
  $C_{\alpha,s,n,R}<\infty$ with the following properties. If
  $\rho\in\C^n$, $\rho\cdot\rho+k^2 = 0$, $\abs{\Im\rho} \geq
  (C_{\alpha,s,n,R} k^2 \mathcal D \mathcal M)^{(n+1)/2}$, then there
  is $\psi \in L^p(\R^n)$ such that $u_0(x) = \exp(\rho\cdot
  x)(1+\psi(x))$ satisfies
  \[
  (\Delta+k^2(1+V))u_0 = 0
  \]
  in $\R^n$, and
  \[
  \norm{\psi}_{L^p(\R^n)} \leq C_{\alpha,s,n,R} k^2 \mathcal D
  \mathcal M \abs{\Im\rho}^{-n/p-\beta}
  \]
  with $\beta=\beta(s,n) > 0$. Moreover $\psi\in H^2(B_{2R})$ with
  norm estimate
  \[
  \norm{\psi}_{H^2(2R)} \leq C_{\alpha,s,n,R}(1+\abs{\rho}^2 + (1+k^2)
  \mathcal M).
  \]
\end{proposition}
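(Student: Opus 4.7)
The plan is to recognise $\psi$ as a solution of the associated Faddeev-type equation and then invoke Lemma~\ref{FaddeevSol} together with the polytope multiplier bounds from Lemma~\ref{potNorm}. Substituting $u_0(x)=e^{\rho\cdot x}(1+\psi(x))$ into $(\Delta+k^2(1+V))u_0=0$ and using $\rho\cdot\rho+k^2=0$ to cancel the pure exponential factor yields the Faddeev equation
\[
(\Delta + 2\rho\cdot\nabla + k^2 V)\psi = -k^2 V,
\]
so it suffices to solve this with suitable quantitative estimates on $\psi$.

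First I would fix $r=2(n+1)/(n+3)$, which is the upper endpoint of the interval forced by the condition $2/(n+1)\leq 1/r-1/r'<2/n$ of Lemma~\ref{FaddeevSol}; this maximises the range of admissible $s$, because $sr<1$ from Lemma~\ref{Pmult} then rewrites as $s<(n+3)/(2(n+1))$, yielding exactly the upper bounds $s<5/6$ in 2D and $s<3/4$ in 3D of the statement. With this choice of $r$ one computes $2+n/r'-n/r = 2/(n+1)$. Applying Lemma~\ref{potNorm} to $V=\chi_P\varphi$ gives
\[
\norm{V}_{H^s_r(\R^n)} + \norm{m_V}_{H^s_{r'}\to H^s_r} \leq C_{\alpha,s,R}\,\mathcal D\,\mathcal M
\]
as soon as $s<\alpha$, so with $q=k^2 V$ and $f=-k^2 V$ the hypotheses of Lemma~\ref{FaddeevSol} are met once $\abs{\Im\rho}\geq (C_{\alpha,s,n,R}\,k^2\,\mathcal D\,\mathcal M)^{(n+1)/2}$, matching the threshold in the statement. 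The lemma then produces $\psi\in H^s_{r'}$ obeying
\[
\norm{\psi}_{H^s_{r'}} \leq C\,\abs{\Im\rho}^{-2/(n+1)}\,k^2\,\mathcal D\,\mathcal M.
\]

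Next I would convert this $H^s_{r'}$ bound into the desired $L^p$ estimate by Sobolev embedding. In the regime $n/r-2<s<n/r'$ corresponding to case~(3) of Lemma~\ref{FaddeevSol} one has $n/p = n/r'-s$ and
\[
\beta \;:=\; \tfrac{2}{n+1} - \tfrac{n}{p} \;=\; \tfrac{2}{n+1} + s - \tfrac{n}{r'},
\]
which is strictly positive iff $s > n/r' - 2/(n+1)$. For $r=2(n+1)/(n+3)$ this inequality reads $s>-1/3$ in 2D (automatic for $s\geq 0$) and $s>1/4$ in 3D, matching the lower threshold in the hypothesis and explaining the H\"older condition in Definition~\ref{admissibleContrast}. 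The $H^2(B_{2R})$ bound then follows from the elliptic regularity clause of Lemma~\ref{FaddeevSol} applied with $f=-k^2V\in L^2_{loc}$, $q=k^2V\in L^\infty_{loc}$ and $\norm{V}_{L^\infty}\leq\mathcal M$.

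The main difficulty is really a compatibility puzzle rather than any single estimate: the exponents $(s,r)$ must simultaneously lie in the half-space multiplier window $sr<1$ of Lemma~\ref{Pmult}, the H\"older window $s<\alpha$ of Lemma~\ref{potNorm}, the interval $2/(n+1)\leq 1/r-1/r'<2/n$ of Lemma~\ref{FaddeevSol}, and the strict inequality $s>n/r'-2/(n+1)$ guaranteeing $\beta>0$. A short arithmetic check shows that these constraints are jointly feasible only for $n\in\{2,3\}$, and in 3D only when $\alpha>1/4$; this is exactly the origin of both the dimensional restriction and the admissibility threshold. Higher dimensions would force a move to Besov-space methods as alluded to in the remarks preceding the proposition.
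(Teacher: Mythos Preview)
Your proposal is correct and follows essentially the same route as the paper: set $q=k^2V$, $f=-k^2V$, apply Lemma~\ref{potNorm} to bound $\norm{f}_{H^s_r}$ and $\norm{m_q}_{H^s_{r'}\to H^s_r}$, then invoke Lemma~\ref{FaddeevSol} and its elliptic regularity clause. One small remark: in 2D your explicit computation of $\beta$ via case~(3) of Lemma~\ref{FaddeevSol} only covers $s<n/r'=1/3$, whereas the proposition allows $s$ up to $5/6$; for $s\geq 1/3$ you are in cases~(1) or~(2), but those still yield $\beta>0$ (indeed a larger $\beta$), so the conclusion stands unchanged.
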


\begin{proof}
  Set $q = k^2 V$ and $f = -k^2 V$. Now $0\leq s<\alpha$, $1 \leq r
  \leq 2$ and $sr<1$, so by Lemma~\ref{potNorm} we have
  \[
  \norm{f}_{H^s_r(\R^n)}, \norm{m_q}_{H^s_{r'} \to H^s_r} \leq
  C_{\alpha,s,n,R} k^2 \norm{P}_{T(s,r)}
  \norm{\varphi}_{C^\alpha(\R^n)}
  \]
  where $m_q$ is the pointwise multiplier operator.
  
  We have $1/r-1/r'=2/(n+1)$, $r\leq2$. The lower bound for
  $\abs{\Im\rho}$ matches Lemma~\ref{FaddeevSol} so we have existence
  of $\psi$. The condition $s > n/r-2$ that's required for the good
  enough error term decay is also satisfied by our a-priori
  requirements on $s$.
  
  For the $H^2$-norm estimate note that $I_0 = (C_{\alpha,s,n,R} k^2
  \mathcal D \mathcal M)^{(n+1)/2}$ and the bound for
  $\norm{f}_{H^s_r}$ imply that $\norm{\psi}_p \leq C_{s,n}$. We also
  see that $\norm{f}_{L^2} \leq C_{n,R} \mathcal M$ by its definition.
\end{proof}

\section{Stability proofs} \label{sect:proofs}

The proofs of the following two lemmas are in the appendix.

\begin{lemma} \label{Qangle2D}
  Let $P, P' \subset \R^2$ be two open bounded convex polygons. Let
  $Q$ be the convex hull of $P \cup P'$. If $x_c$ is a vertex of $P$
  such that $d(x_c, P') = d_H(P,P')$, where $d_H$ gives the Hausdorff
  distance,
  \[
  d_H(P,P') = \max\big( \sup_{x\in P} d(x,P'), \, \sup_{x'\in P'}
  d(P,x')\big),
  \]
  then $x_c$ is a vertex of $Q$. If the angle of $P$ at $x_c$ is
  $\alpha$, then the angle of $Q$ at $x_c$ is at most $(\alpha+\pi)/2
  < \pi$.
\end{lemma}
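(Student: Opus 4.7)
The plan is angular analysis in the frame centered at $x_c$. Set $y\in P'$ to be the closest point of $P'$ to $x_c$, put $d=\abs{y-x_c}=d(x_c,P')=d_H(P,P')$ and $\hat v=(y-x_c)/d$, and place $x_c$ at the origin with $\hat v=(1,0)$, parametrizing outgoing directions by angles $\theta\in(-\pi,\pi]$. The two edges of $P$ at $x_c$ point along angles $\theta_1\leq\theta_2$ with $\theta_2-\theta_1=\alpha$, and let $[\phi_{\min},\phi_{\max}]$ denote the angular range of $P'$ seen from $x_c$. Once I know $x_c$ is a vertex of $Q$, the angle of $Q$ at $x_c$ equals the angular extent of the tangent cone of $Q$ at $x_c$, which is the smallest arc of directions containing both $[\theta_1,\theta_2]$ and $[\phi_{\min},\phi_{\max}]$.

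I first confine both arcs to the closed right half-plane of directions. Convexity of $P'$ and the closest-point property give $P'\subset\{z:(z-x_c)\cdot\hat v\geq d\}$, so $[\phi_{\min},\phi_{\max}]\subset(-\pi/2,\pi/2)$ and contains the angle $0$ of $y$. On the other side, the convex function $f(x)=d(x,P')$ is differentiable at $x_c$ with $\nabla f(x_c)=-\hat v$, and the maximality of $x_c$ on the convex set $P$ (a consequence of $d(x_c,P')=d_H(P,P')$) forces $-\omega\cdot\hat v\leq0$, i.e.\ $\omega\cdot\hat v\geq0$, for every direction $\omega$ in the tangent cone of $P$ at $x_c$; hence $[\theta_1,\theta_2]\subset[-\pi/2,\pi/2]$.

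The crux --- and the step I expect to be the main obstacle --- is the trigonometric bound $\phi_{\max}\leq \pi/4+\theta_2/2$ whenever $\phi_{\max}>\theta_2$. Picking $x'\in P'$ that realises $\phi_{\max}$ at distance $R$, the Hausdorff condition supplies $d(x',P)\leq d$; and because $P$ is contained in the closed convex cone spanned by $[\theta_1,\theta_2]$ at $x_c$, the distance from $x'$ to this cone is a lower bound for $d(x',P)$. A direct calculation gives the cone-distance formula $R\sin(\phi_{\max}-\theta_2)$ (the alternative regime $\phi_{\max}-\theta_2>\pi/2$ forces $R\leq d$ which together with $R\cos\phi_{\max}\geq d$ from the previous step and $\theta_2>-\pi/2$ is impossible). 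Combined with $R\cos\phi_{\max}\geq d$, this gives $\sin(\phi_{\max}-\theta_2)\leq\cos\phi_{\max}=\sin(\pi/2-\phi_{\max})$, and monotonicity of $\sin$ on $[0,\pi/2]$ delivers $\phi_{\max}\leq\pi/4+\theta_2/2$. The symmetric inequality $\phi_{\min}\geq-\pi/4+\theta_1/2$ (when $\phi_{\min}<\theta_1$) follows identically.

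A short case analysis on whether $\phi_{\max}>\theta_2$ and whether $\phi_{\min}<\theta_1$ then bounds the extent $\beta=\max(\theta_2,\phi_{\max})-\min(\theta_1,\phi_{\min})$ above by $(\alpha+\pi)/2$ in all four cases, using $\theta_1\geq-\pi/2$, $\theta_2\leq\pi/2$, and $\theta_2-\theta_1=\alpha$; the worst case is $\phi_{\max}=\pi/4+\theta_2/2$ and $\phi_{\min}=-\pi/4+\theta_1/2$, which saturates to exactly $(\alpha+\pi)/2$. Because $(\alpha+\pi)/2$ is strictly less than $\pi$ (as $\alpha<\pi$), the tangent cone is neither a half-plane nor the full plane, so $x_c$ is a genuine vertex of $Q$, and the angle of $Q$ at $x_c$ equals $\beta$ and obeys the announced bound.
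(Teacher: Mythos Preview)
Your proof is correct and takes a genuinely different route from the paper's.

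The paper argues synthetically: it builds half-planes $H_a$, $H_b$ tangent to the circle $S(x_c,h)$ and parallel to the edges of $P$ at $x_c$, together with the supporting half-plane $H_C$ of $P'$ at the closest point $C$. After showing $P'\subset H_a^{\complement}\cap H_b^{\complement}\cap H_C$ and that the edge rays of $P$ must hit $H_C$, it recognises $S(x_c,h)$ as the incircle of the triangle bounded by $\partial H_a,\partial H_b,\partial H_C$ and reads off the angle $(\alpha+\pi)/2$ by angle-chasing with perpendiculars from $x_c$ to the three sides.

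Your argument is analytic: you fix angular coordinates with the closest-point direction $\hat v$ as reference, use the differentiability of $x\mapsto d(x,P')$ and the first-order maximality condition to trap the edge directions of $P$ in $[-\pi/2,\pi/2]$, and then obtain the key trigonometric inequality $\sin(\phi_{\max}-\theta_2)\leq\cos\phi_{\max}$ by comparing the cone-distance of an extreme point of $P'$ with the Hausdorff bound. Both proofs exploit the same two constraints (the supporting-hyperplane inclusion for $P'$ and the bound $d(x',P)\leq h$ for $x'\in P'$), but you convert them directly into an inequality between angles rather than into an incircle configuration. Your approach avoids the picture and the special ``parallel'' case that the paper has to skip, and it makes the bound $(\alpha+\pi)/2$ appear as the saturating value of a clean two-sided estimate; the paper's approach, on the other hand, yields the exact value in one stroke once the incircle is identified, without any case split.
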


\begin{lemma} \label{Qangle3D}
  Let $P, P' \subset \R^3$ be two open cuboids. Let $Q$ be the convex
  hull of $P \cup P'$. If $x_c$ is a vertex of $P$ such that $d(x_c,
  P') = d_H(P,P')$, where $d_H$ gives the Hausdorff distance,
  \[
  d_H(P,P') = \max\big( \sup_{x\in P} d(x,P'), \, \sup_{x'\in P'}
  d(P,x')\big),
  \]
  then $x_c$ is a vertex of $Q$. The latter can also fit inside an
  open spherical cone $\mathcal Q$ with vertex $x_c$ and opening angle
  $2\alpha' < \pi$. Here $\alpha'$ is independent of $P$ and $P'$ or
  their location.
\end{lemma}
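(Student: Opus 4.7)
The plan is to leverage the cuboid structure of $P$, which makes the tangent cone $\mathfrak{P}$ at any vertex an isometric copy of the positive octant $\R_+^3$, and to control $P'$ through the direction $v_0=(y-x_c)/\mathfrak{h}$ from $x_c$ to the closest point $y\in P'$, where $\mathfrak{h}=d_H(P,P')$.

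First I would verify that $v_0\in\mathfrak{P}$. Since $g(x)=d(x,P')$ is convex on $\R^3$ and maximized over $P$ at the vertex $x_c$, its one-sided directional derivative must be non-positive in every direction $w\in\mathfrak{P}$. Computing this derivative as $-v_0\cdot w$ yields $v_0\cdot w\geq 0$ for all $w\in\mathfrak{P}$, and since the octant is self-dual this gives $v_0\in\mathfrak{P}$. The projection identity at $y$ then produces the half-space estimate $(z-x_c)\cdot v_0\geq \mathfrak{h}>0$ for every $z\in P'$, while $(p-x_c)\cdot v_0\geq 0$ for every $p\in P$ since $P-x_c\subset\mathfrak{P}$. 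Thus $Q=\mathrm{conv}(P\cup P')$ lies in the closed half-space cut out by the hyperplane through $x_c$ perpendicular to $v_0$, so $x_c\in\partial Q$. For extremality, any representation of $x_c$ as a convex combination of extreme points of $P\cup P'$ must place zero mass on $P'$-vertices by the strict $v_0$-inequality, and then the vertex structure of the cuboid $P$ forces the remaining mass onto $\{x_c\}$ as in the analogous planar argument.

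For the opening angle bound I would take the axis of $\mathcal{Q}$ to be the main octant diagonal $\bar v\in\mathfrak{P}$ (the image of $(1,1,1)/\sqrt{3}$ under the rigid motion that puts $\mathfrak{P}$ in standard position). Every unit direction $w$ from $x_c$ to a point $p\in P\setminus\{x_c\}$ then lies in $\mathfrak{P}$ and satisfies $w\cdot\bar v\geq 1/\sqrt{3}$, a universal gap. For $z\in P'$ I would use the other side of the Hausdorff estimate $\sup_{P'}d(\cdot,P)\leq \mathfrak{h}$ to decompose $z=p+b$ with $p\in P$ closest and $|b|\leq \mathfrak{h}$, combined with $|z-x_c|\geq \mathfrak{h}$; this yields a lower bound on $(z-x_c)\cdot\bar v/|z-x_c|$ in terms of the ratio $|p-x_c|/\mathfrak{h}$.

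The hard part will be the degenerate regime where $|p-x_c|$ is comparable to $\mathfrak{h}$, in which the naive estimate does not yet bound this ratio away from $-1$. Here I expect to exploit the remaining input that $x_c$ is the \emph{farthest} vertex of $P$ from $P'$: this forces $P'$ to come within $\mathfrak{h}$ of every other vertex of $P$, and since $P'$ is itself a cuboid inside $B_R$, this strongly restricts which directions from $x_c$ it can occupy, yielding an inclusion $P'\subset x_c+\mathcal{K}$ for a cone $\mathcal{K}$ whose half-angle is strictly less than $\pi/2$ by an amount depending only on the octant geometry. Combining this with the $\mathfrak{P}$-estimate inside $P$ then produces a universal $\alpha'<\pi/2$ independent of $P$, $P'$ and their placement in $B_R$, in direct parallel to the planar bound $(\alpha+\pi)/2<\pi$ obtained in Lemma~\ref{Qangle2D}.
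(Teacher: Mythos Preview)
Your argument that $x_c$ is an extreme point of $Q$ is correct and in fact tidier than the paper's, which simply cites the 2D argument. Your derivation of $v_0\in\overline{\mathfrak P}$ via the one-sided derivative of the convex function $x\mapsto d(x,P')$ is exactly the paper's observation that the closest point $C$ lies in the closed spherical triangle $T=\{|x|=h,\ x_j\ge 0\}$, and your projection inequality $(z-x_c)\cdot v_0\ge \mathfrak h$ is precisely the half-space $H_C$ used there.

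The gap is in the opening-angle bound. Your decomposition $z=p+b$ with $|b|\le\mathfrak h$ gives only
\[
\frac{(z-x_c)\cdot\bar v}{|z-x_c|}\ \ge\ \frac{t/\sqrt3-1}{t+1},\qquad t=\frac{|p-x_c|}{\mathfrak h},
\]
which collapses to $-1$ as $t\to0$; you acknowledge this. The proposed rescue --- that $P'$ must come within $\mathfrak h$ of every vertex of $P$ and that $P'$ is a cuboid --- does not obviously produce a cone inclusion $P'\subset x_c+\mathcal K$ with a universal half-angle. For a very thin cuboid $P$, all vertices are close to one another and this extra input degenerates; the cuboid structure of $P'$ by itself does not prevent bad directions near $x_c$ either. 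As stated, the plan does not close.

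The paper uses the very constraints you already have, but combines them differently. From $z=p+b$ you implicitly obtain $z_j\ge -\mathfrak h$ for each $j$ (these are the half-spaces $H_j^\complement$), and you have $z\cdot v_0\ge\mathfrak h$ (this is $H_C$). After scaling by $1/\mathfrak h$ the region containing $P'$ is $\{y_j\ge -1\}\cap\{y\cdot v_0\ge1\}$, which depends only on the unit vector $v_0$ in the compact spherical triangle $T$. For each fixed $v_0\in T$ one checks that this set together with the positive octant (which contains $P$) fits in a spherical cone of half-angle strictly less than $\pi/2$; the required half-angle varies continuously in $v_0$, and compactness of $T$ yields a uniform $\alpha'<\pi/2$. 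No appeal to the other vertices of $P$ or to the cuboid structure of $P'$ is needed --- only its convexity enters, via the projection inequality you already proved. Replacing your $\bar v$-axis estimate by this compactness-over-$v_0$ argument closes the proof.
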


We are ready to proof the final theorem whose statement is on page
\pageref{potSupportStab}.

\begin{proof}[Proof of Theorem~\ref{potSupportStab}]
  By Lemma~\ref{Qangle2D} and Lemma~\ref{Qangle3D} and possibly
  switching the symbols $P$ and $P'$ (and their associated waves and
  potentials) we may assume that $\mathfrak h = d(x_c,P')$ with $x_c$
  a vertex of $P$.  We use the total wave $u'$ of the second potential
  $V'$ as a ``local incident wave'' in the neighbourhood of
  $x_c$. This is allowed since $(\Delta + k^2) u' = 0$ there because
  $V'=0$ around $x_c$.

  The potentials $V$ and $V'$ give well-posed scattering. Denote the
  $L^2$-norm of the difference of the far-field patterns by
  $\varepsilon = \norm{u^s_\infty-u'^s_\infty}_{L^2(\mathbb
    S^{n-1})}$. Use Proposition~\ref{FF2bndry}. If $Q$ is the convex
  hull of $P \cup P'$ then
  \begin{equation} \label{deltaDef}
    \sup_{\partial Q} \big( \abs{u^s-u'^s} + \abs{\nabla (u^s-u'^s)}
    \big) \leq \frac{C}{\sqrt{\ln\ln \frac{\mathcal S}{\varepsilon}}}
  \end{equation}
  when $\varepsilon < \varepsilon_m$. Here $C$ and $\varepsilon_m$
  depend only on the a-priori parameters. Denote the right-hand side
  by $\delta(\varepsilon)$ to conserve space in formulas.
  
  Let $\mathfrak Q$ be the polyhedral cone generated by the convex
  hull $Q$ at $x_c$. By Lemma~\ref{Qangle2D} and Lemma~\ref{Qangle3D}
  there is an open spherical cone $\mathcal Q \supset \mathfrak Q
  \supset Q$ with vertex $x_c$ having opening angle at most $2\alpha'
  = 2\alpha'(\alpha_m, \alpha_M) < \pi$. Let $\mathfrak P$ be the cone
  generated by $P$ at its vertex $x_c$. Remember for later that
  $(\mathcal Q, \mathfrak P) \in \mathcal G(\alpha_m, \alpha_M,
  \alpha', n)$ using the notation from Lemma~\ref{lowerBound}.

  Let $h = \min(\ell, \mathfrak h)$ and it is enough to consider the
  case $h>0$. We have $P \cap B(x_c,h) = \mathfrak P \cap B(x_c,h)$
  and $Q \cap B(x_c,h) = \mathfrak Q \cap B(x_c,h)$. Denote the former
  by $P_h$ and the latter by $Q_h$. We also have $P_h \cap P' = Q_h
  \cap P' = \emptyset$.

  We want to use Proposition~\ref{intByPartsEstim} next. The
  conditions of non-vanishing total waves of
  Definition~\ref{nonVanishing} imply that we have $N=0$, $P_N(x)
  \equiv u'(x_c) \neq 0$. Moreover, as in the proof of
  Proposition~\ref{FF2bndry}, we see that $u'$ is Lipschitz with norm
  at most $C(k,R,\mathcal M,\mathcal S)$. The other conditions of
  Proposition~\ref{intByPartsEstim} are also satisfied. Recall also
  $\delta(\varepsilon) = C/\sqrt{\ln\ln(\mathcal S/\varepsilon)}$ from
  \eqref{deltaDef}, and that $\norm{u},\norm{u'} \leq C_{k,R,\mathcal
    S}$ in $H^2(B_{2R})$. We can absorb this constant into the
  constants of the inequality. Hence there is a constant $C$ depending
  only on a-priori parameters such that if $1/p+1/p'=1$, then
  \begin{align}
  &C\abs{\varphi(x_c) \int_{\mathfrak P} e^{\rho\cdot(x-x_c)} u'(x_c)
      dx} \leq \abs{\Re\rho}^{-n} e^{-\delta_0\abs{\Re\rho}h/2}
    \notag\\ &\qquad\quad + \abs{\Re\rho}^{-n-\min(1,\alpha)} +
    \abs{\Re\rho}^{-n/p'} \norm{\psi}_{L^p(P_h)} \notag\\ &\qquad
    \quad + h^{(n-1)/2} \abs{\rho} (1+\norm{\psi}_{H^2(B_{2R})})
    \delta(\varepsilon) \notag\\ &\qquad \quad + h^{n/2-1}
    e^{-\delta_0\abs{\Re\rho}h} \abs{\rho}
    (1+\norm{\psi}_{H^2(B_{2R})}) \label{bigEstim}
  \end{align}
  whenever $u_0\in H^2(B_{2R})$ satisfies $(\Delta+k^2(1+V))u_0=0$,
  \[
  u_0(x) = e^{\rho\cdot(x-x_c)}(1+\psi(x)),
  \]
  $\psi \in L^p$ in $Q_h$ with $\rho \in \C^n$, $\abs{\rho}\geq1$ and
  $\Re \rho \cdot (x-x_c) \leq -\delta_0 \abs{\Re\rho} \abs{x-x_c}$
  for some $\delta_0>0$ and any $x\in Q_h$.

  Recall that $(\mathcal Q, \mathfrak P) \in \mathcal G(\alpha_m,
  \alpha_M, \alpha', n)$, and hence we may use Lemma~\ref{lowerBound}.
  It gives us constants $\tau_0 = \tau_0(k,\alpha_m, \alpha_M,
  \alpha', n)$, $c = c(\alpha_m, \alpha_M, n) >0$ and a curve $\tau
  \mapsto \rho(\tau) \in \C^n$, $\tau = \abs{\Re\rho(\tau)}$
  satisfying the conditions required of $\rho$ above with $\delta_0 =
  \cos \alpha' > 0$, $\rho(\tau) \cdot \rho(\tau) + k^2 = 0$ and
  \begin{equation} \label{lbEstim}
    \abs{\int_{\mathfrak P} e^{\rho\cdot(x-x_c)} u'(x_c) dx} \geq c
    \abs{u'(x_c)} \tau^{-n}
  \end{equation}
  whenever $\tau \geq \tau_0$.

  If $\tau \geq \max(\tau_0, C_0)$, with the constant $C_0$ depending
  on a-priori parameters and arising from Proposition~\ref{CGOsol},
  then the latter gives existence of $u_0$ and $\psi$ required
  above. We may indeed use that proposition because the a-priori
  bounds on the H\"older smoothness index $\alpha$ imply the existence
  of a suitable Sobolev smoothness index $s$ used in there. Finally it
  gives the estimates
  \[
  \norm{\psi}_{L^p(\R^n)} \leq C \abs{\Im\rho}^{-n/p-\beta}
  \]
  for some $\beta = \beta(s,n) > 0$ and
  \[
  \norm{\psi}_{H^2(B_{2R})} \leq C(1+\abs{\rho}^2)
  \]
  where $C$ again depends only on the a-priori parameters.

  We have all the fundamental estimates now. Let us apply them. We
  have $\exp(-x) \leq x^{-1}$ and $\exp(-x) \leq (n+4)!\, x^{-n-4}$
  for all $x>0$. Also, since $\rho(\tau) \cdot \rho(\tau) + k^2 = 0$,
  we get $\abs{\rho(\tau)} = \sqrt{k^2 + 2\tau^2}$. By taking a new
  lower bound for $\tau$, for example $\tau \geq k$, we may assume
  that $\abs{\rho(\tau)} \leq \sqrt{3} \tau$. Hence we can estimate
  \begin{align*}
    &\abs{\Re\rho}^{-n} e^{-\delta_0 \abs{\Re\rho} h/2} \leq
    C\abs{\Re\rho}^{-n-1} h^{-1},\\ &\abs{\Re\rho}^{-n/p'}
    \norm{\psi}_{L^p} \leq C \abs{\Re\rho}^{-n-\beta},\\ &h^{(n-1)/2}
    \abs{\rho}(1+\norm{\psi}_{H^2(B_{2R})}) \delta(\varepsilon) \leq C
    h^{(n-1)/2} \abs{\Re\rho}^3 \delta(\varepsilon),\\ & h^{n/2-1}
    e^{-\delta_0\abs{\Re\rho}h} \abs{\rho}
    (1+\norm{\psi}_{H^2(B_{2R})}) \leq C h^{-n/2-5}
    \abs{\Re\rho}^{-n-1}
  \end{align*}
  in \eqref{bigEstim}. Divide the new constants to the left hand side,
  take the lower bound \eqref{lbEstim} into account and use the
  a-priori assumption $\abs{u'(x)} \geq c > 0$ in $B_R \setminus
  P'$. Finally, using $h\leq1$ and $\tau\geq1$ we get
  \[
  c \abs{\varphi(x_c)} \leq h^{(n-1)/2} \big( \delta(\varepsilon)
  \tau^{n+3} + h^{-n-9/2} \tau^{-m} \big)
  \]
  where $m = \min(1,\alpha,\beta)$. This holds as long as $\tau \geq
  \max(\tau_0, C_0, k)$ and $h = \min(\ell,\mathfrak h)$. To make
  formulas simpler we estimate the right-hand side above and get
  \begin{equation} \label{allInSameInequality}
    c \abs{\varphi(x_c)} \leq \delta(\varepsilon) \tau^{n+5} +
    h^{-n-5} \tau^{-m}.
  \end{equation}
  
  Setting $\tau = \tau_e$ with
  \[
  \tau_e = \left( \frac{1}{h^{n+5}\delta(\varepsilon)}
  \right)^{\frac{1}{m+n+5}}
  \]
  makes both terms on the right hand side of
  \eqref{allInSameInequality} equal (which gives the minimum modulo
  constants), and the inequality becomes
  \begin{equation} \label{allInSameOptimized}
    c \abs{\varphi(x_c)} \leq 2 h^{-\frac{(n+5)^2}{m+n+5}}
    \delta(\varepsilon)^{\frac{m}{m+n+5}}.
  \end{equation}
  Note that if $\varepsilon$ is small enough, then
  \[
  \tau_e \geq \tau_e h^{\frac{n+5}{m+n+5}} =
  (\delta(\varepsilon))^{-\frac{1}{m+n+5}} \geq \max(\tau_0,C_0,k)
  \]
  and so we can choose $\tau=\tau_e$ in \eqref{allInSameOptimized}.
  Solving for $h$ in it gives
  \[
  \min(\ell,\mathfrak h) = h \leq C \delta^{\frac{m}{(n+5)^2}}
  \abs{\varphi(x_c)}^{-\frac{m}{(n+5)^2} - \frac{1}{n+5}}.
  \]
  By the a-priori bounds of Definition~\ref{aprioriBounds} we have
  $\abs{\varphi(x_c)} \geq \mu > 0$. Hence if $\varepsilon$ is again
  small enough (now also depending on $\mu$ and $\ell$), then the
  right-hand side is smaller than $\ell$, and so $\min(\ell,\mathfrak
  h) = \mathfrak h$. Writing out the definition of
  $\delta(\varepsilon)$ gives
  \[
  \mathfrak h \leq C \left(\ln\ln
  \frac{S}{\varepsilon}\right)^{-\frac{m}{2(n+5)^2}}
  \]
  and the claim is proven.
\end{proof}

\begin{proof}[Proof of Theorem~\ref{cornerScatStab}]
  The proof uses the same lemmas and propositions as the proof of
  Theorem~\ref{potSupportStab}. Now instead of having two non-trivial
  potentials $V$ and $V'$, we choose the following: $P' = \emptyset$,
  $V' \equiv 0$. This implies that $u' = u^i$, $u'^s \equiv 0$,
  $u'^s_\infty \equiv 0$ among others. In particular $V' \equiv 0$ is
  trivially admissible.
  
  Proceed as in the proof of Theorem~\ref{potSupportStab}, except that
  choose $h = \ell$ instead of $h = \min(\ell, d_H(P,P'))$. Up to
  showing \eqref{allInSameOptimized} none of the constants depend on
  $\mu$ or $\ell$. Now, if $\varepsilon$ is small enough, let's say at
  most $\varepsilon_{min}$ which depends only on a-priori parameters
  except for $\ell$, $\varphi(x_c)$, then
  \[
  (\delta(\varepsilon))^{-\frac{1}{m+n+5}} \geq \max(\tau_0,C_0,k)
  \]
  and we can again let $\tau=\tau_e$ in \eqref{allInSameOptimized}.
  Solving for $\varepsilon$ in it gives
  \[
  \norm{u^s_\infty}_{L^2(\mathbb S^{n-1})} = \varepsilon \geq
  \frac{\mathcal S}{\exp \exp (C \ell^{-2/\gamma}
    \abs{\varphi(x_c)}^{-2-2/((n+5)\gamma)})}
  \]
  for $\gamma = \min(1,\alpha,\beta)/(n+5)^2$ as in the previous
  proof, and a constant $C$ depending on a-priori data but not $\ell$
  or $\varphi(x_c)$. If on the other hand $\varepsilon >
  \varepsilon_{min}$ the claim is immediately true.
\end{proof}

\section{Appendix} \label{sect:appendix}

\begin{proof}[Proof of Lemma~\ref{Qangle2D}]
  Let $a$ and $b$ be the vertices of $P$ on the adjacent edges to
  $x_c$.  Let $C \in \overline{P'}$ be any point such that $d(x_c,C) =
  d_H(P,P')$, and let $h = d_H(P,P')$. Consider the circle $S(x_c,h)$.
  Let $H_a$ be an open half-plane tangent to $S(x_c,h)$, parallel to
  the segment $x_c a$ and such that it is on the opposite side of $x_c
  a$ than $b$. Construct $H_b$ similarly. See Figure~\ref{pic1}a. Let
  $H_C$ be the closed half-space tangent to $S(x_c,h)$ at $C$ with
  $x_c \notin H_C$.
  
  Let $x' \in P'$. If $x' \in H_a$, then $d(x',P) \geq
  d(x',\ell_{x_c,a}) > h$ where $\ell_{x_c,a}$ is a line through $x_c$
  and $a$. This follows from the convexity of $P$: the polygon is
  contained in the cone with vertex $x_c$ and edges defined by $a$ and
  $b$. Thus $d_H(P,P') \geq d(x',P) > h = d_H(P,P')$, a contradiction.
  Similarly for $x' \in H_b$. Consider $H_C$ next: the convexity of
  $P'$ implies that the segment $x'C$ belongs to $\overline{P'}$. If
  $x'\notin H_C$, then there is $y' \in x'C \cap B(x_c,h)$ by the
  non-tangency of $x'C$. Then $y' \in \overline{P'}$ and $d(x_c,y') <
  h$ so $d_H(P,P') < h$, a contradiction again. Thus we see that $P'
  \subset H_a^\complement \cap H_b^\complement \cap H_C$.
  \begin{figure}
    \begin{center}
      \includegraphics[]{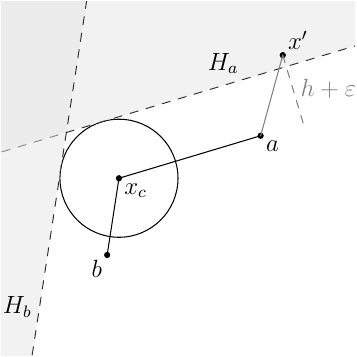} \includegraphics[]{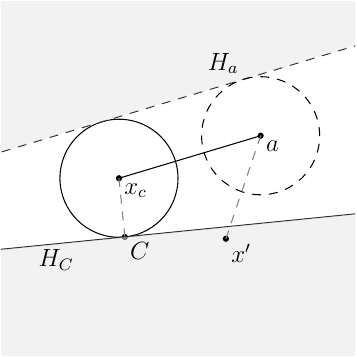}
    \end{center}
    \caption{a) $P' \subset H_a^\complement \cap H_b^\complement \cap
      H_C$, \quad b) ray $x_c$ to $a$ must meet $H_C$}
    \label{pic1}
  \end{figure}

  Next, $H_C$ must be distance $h$ from $a$: if it were not, then for
  any $x' \in P'$ we have $d(a,x') \geq d(a,H_C) > h$ since $P'
  \subset H_C$ as was shown above. Hence $\partial H_C$ and $\partial
  H_a$ are either parallel (a case we skip in this proof) or meet at a
  point $A'$, in which case the ray from $x_c$ towards $a$ intersects
  $H_C$.  Do the same for $b$ to get $B'$. See
  Figure~\ref{pic1}b. This means that $S(x_c,h)$ is the incircle of
  the triangle formed by $H_a$, $H_b$ and $H_C$.

  We can now see that $x_c$ is a vertex of $Q$. First of all $x_c \in
  \overline Q$ since $x_c \in \overline P$. Also, $P$ is inside the
  angle $a x_c b$ and $P'$ inside the angle $A' x_c B'$, which is
  obviously less than $\pi$. Thus $x_c$ is a vertex of $Q$. Moreover
  its angle is at most $\angle A' x_c B'$. See Figure~\ref{pic3}a.
  \begin{figure}
    \begin{center}
      \includegraphics[]{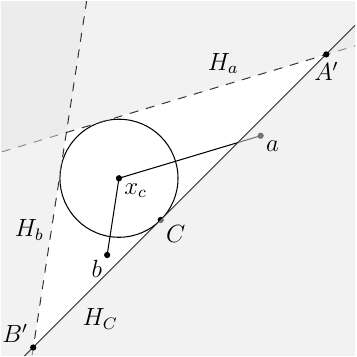} \includegraphics[]{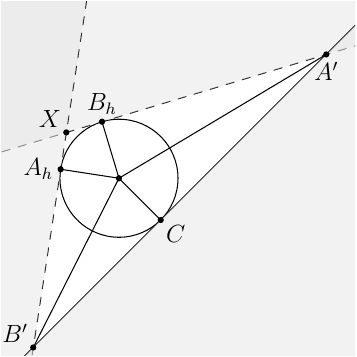}
    \end{center}
    \caption{a) $S(x_c,h)$ is an incircle, \quad b) solving $\angle A'
      x_c B'$}
    \label{pic3}
\end{figure}

  Let $X$ be the intersection of $\partial H_a$ and $\partial
  H_b$. This is a well-defined point since $0 < \angle a x_c b <
  \pi$. We have $\angle A' X B' = \angle a x_c b = \alpha$ by parallel
  transport of $x_c a$ to $X A'$ and $x_c b$ to $X B'$. Let the
  perpendiculars from $x_c$ to $X A'$, $A' B'$, $B' X$ have base
  points $B_h$, $C$, $A_h$, respectively. See Figure~\ref{pic3}b. Then
  $\angle A_h x_c B_h = \pi - \alpha$, $\angle B_h x_c A' = \angle A'
  x_c C$ and $\angle C x_c B' = \angle B' x_c A_h$. This implies that
  $\angle A' x_c B' = (\alpha+\pi)/2$ at once since the sum of all of
  these angles is $2\pi$.
\end{proof}

\begin{proof}[Proof of Lemma~\ref{Qangle3D}]
  The proof proceeds as in the proof of Lemma~\ref{Qangle2D}. We can
  choose coordinates such that $x_c = 0$ and the three edges of $P$
  starting from $x_c$ lie on the positive coordinate axes having unit
  vectors$e_1$, $e_1$ and $e_3$. Let $h = d(x_c,C) = d_H(P,P')$ for
  some $C \in \overline P'$.

  If we set $H_j = \{ x \mid x\cdot e_j < -h\}$, then as in the 2D
  proof, we see that $P' \subset H_j^\complement$. Similarly, if $H_C$
  is the closed half-space tangent to $S(x_c,h)$ at $C$, we see that
  $P' \subset H_C$. Hence $P' \subset H_1^\complement \cap
  H_2^\complement \cap H_3^\complement \cap H_C$.

  If $C_3 < 0$, i.e. it is on the lower hemisphere of $S(x_c,h)$, then
  there is $x \in P$ with $d(x,C) > h = d_H(P,P')$. Just take any $x$
  on the axis with $x_3>0$. The contradiction, seen also if $C_1<0$ or
  $C_2<0$, forces $C$ to be on the closed spherical triangle $T = \{ x
  \mid \abs{x} = 1, x_j \geq 0\}$.

  Now, no matter where $C \in T$ is, recalling that $P' \subset
  H_1^\complement \cap H_2^\complement \cap H_3^\complement \cap H_C$,
  it is easy to see that
  \[
  \sup_{A,B\in P\cup P'} \angle A x_c B < \pi
  \]
  and hence that $H_1^\complement \cap H_2^\complement \cap
  H_3^\complement \cap H_C$ fits inside an spherical cone that does
  not contain a plane. Moreover the minimal required angle of the
  spherical cone depends continuously on the location of $C \in
  T$. Compactness of the latter implies the claim.
\end{proof}

\section{Concluding remark}

In this paper, we establish two sharp quantitative results for the direct and inverse time-harmonic acoustic wave scattering problem. The first one is a logarithmic stability result in 
recovering the support of an inhomogeneous medium, independent of its contents, by a single far-field measurement, which quantifies the uniqueness result in \cite{HSV}. The second result 
shows that if an inhomogeneous medium possesses a corner, then it scatters an incident wave field stably in the sense that the energy of the corresponding scattered far-field possesses a positive
lower bound. This quantifies the corner scattering result in \cite{BPS} and has interesting implications to cloaking applications. Those topics are of fundamental importance in the wave scattering theory. In order to establish the quantitative results, we also make several technical new developments, which might be useful for tackling other direct and inverse scattering problems. Finally, we would like to remark that we only consider the case that the acoustic mediums are isotropic and it would be interesting and of practical importance to investigate the case that the inhomogeneous mediums are anisotropic. We are aware of a recent paper \cite{CX}, where the authors studied the acoustic scattering from an anisotropic acoustic medium that possesses a corner. It is shown that an anisotropic corner can always scatter a nontrivial far-field pattern, which extends the study in \cite{BPS} to the more challenging anisotropic case. The extension is technically highly nontrivial. It would be interesting to consider extending the quantitative studies in the current article to the anisotropic setting. We shall report our study in this aspect in our future work. 

Since the post of this work to arXiv in 2016, there have been many
developments in the literature on qualitatively and quantitatively
characterizing the geometrical singularities in wave scattering as
well as their implications to inverse problems and
invisibility. Accordingly, we mention here \cite{B,BLLW,BL,BL17,BL20,BL21,BLX}, \cite{CDL20,CDL21} as well
as a recent survey paper \cite{L20}.

\section*{Acknowledgement}

The authors would like to express their gratitudes to the anonymous
referee for constructive and insightful comments, which have led to
significant improvements on the presentation and results of this
paper.  The work of the first author was supported by the Academy of
Finland (decision no. 312124) and partly by a grant from the Estonian
Research Council (grant no. PRG 832).  The work of the second author
was supported by the startup fund and FRG grants from Hong Kong
Baptist University, and the Hong Kong RGC grants (projects 12302017
and 12301218).

%

\end{document}